\newcommand{\ie}{{\em i.e.\/}\xspace}
\newcommand{\eg}{{\em e.g.\/}\xspace}
\newcommand{\viz}{{\em viz.\/}\xspace}
\newcommand{\etc}{{\em etc.\/}\xspace}
\newcommand{\graf}{{\cal G}}
\newcommand{\E}{{\mbox{E}}}
\newcommand{\pre}{{\rm pr}}
\newcommand{\mo}{{\rm mo}}
\newcommand{\an}{{\rm an}}
\newcommand{\anc}{{\rm an}}
\newcommand{\ma}{{\rm man}}
\newcommand{\nd}{{\rm nd}}
\newcommand{\parents}[1]{{\rm pa}(#1)}
\newcommand{\eqref}[1]{\mbox{(\ref{eq:#1})}}
\newcommand{\appref}[1]{\mbox{Appendix~\ref{sec:#1}}}
\newcommand{\secref}[1]{\mbox{\S$\,$\ref{sec:#1}}}
\newcommand{\figref}[1]{\mbox{Figure~\ref{fig:#1}}}
\newcommand{\lemref}[1]{\mbox{Lemma~\ref{lem:#1}}}
\newcommand{\defref}[1]{\mbox{Definition~\ref{def:#1}}}
\newcommand{\itref}[1]{\mbox{\ref{it:#1}}}
\newcommand{\thmref}[1]{\mbox{Theorem~\ref{thm:#1}}}
\newcommand{\condref}[1]{\mbox{Condition~\ref{cond:#1}}}
\newcommand{\cip}{\mbox{$\perp\!\!\!\perp$}}
\newtheorem{expl}{Example}[section]
\newtheorem{definer}{Definition}[section]
\newtheorem{theorem}{Theorem}[section]
\newtheorem{thm}[theorem]{Theorem}
\newtheorem{algor}{Algorithm}[section]
\newtheorem{lem}[theorem]{Lemma}
\newtheorem{lemma}[theorem]{Lemma}
\newtheorem{cor}[theorem]{Corollary}
\newtheorem{cond}{Condition}[section]
\newtheorem{rem*}{Remark}[section]
\newcommand{\halm}{\hspace*{\fill} $\Box$\par}
\newenvironment{proof}{\noindent {\bf Proof. }}{\halm\vspace{\baselineskip}}
\newenvironment{ex}{\begin{expl}\rm}{\halm\end{expl}}
\newenvironment{defn}{\begin{definer}\rm}{\end{definer}}
\renewcommand{\cite}{\citep}
\newcommand{\textcite}{\citet}
\newcommand{\co}{o} \newcommand{\ce}{e} 
\newcommand{\as}[1]{\mbox{a.s. [$#1$]}}
\newcommand{\cs}{{\cal S}}
\newcommand{\indo}[2]{\mbox{$#1 \,\cip\, #2$}}
\newcommand{\ind}[3]{\mbox{$#1 \, \cip\, #2 \mid #3$}}
\newcommand{\inda}[4]{\mbox{$#1 \, \cip_{\mbox{\scriptsize $#4$}}\, #2 \mid #3$}}
\renewcommand{\pre}[1]{{\rm pre}(#1)}
\newcommand{\pa}[2]{{\rm pa}_{#1}{(#2)}}
\renewcommand{\dag}{{\cal D}}
\begin{document}

\begin{frontmatter}

  \title{Identifying the consequences of dynamic treatment
    strategies:\\
    A decision-theoretic overview
  }
  \runtitle{Identifying dynamic strategies}

  \begin{aug}
    \author{\fnms{A. Philip}
      \snm{Dawid}\ead[label=e1]{apd@statslab.cam.ac.uk}\ead[label=e2,url]{tinyurl.com/2maycn}}
    \address{Centre for Mathematical Sciences\\University of
      Cambridge\\Wilberforce Road\\Cambridge CB3 0WB\\UK\\
      \printead{e1}\\ \printead{e2}} \and \author{\fnms{Vanessa}
      \snm{Didelez}\corref{}\ead[label=e3]{vanessa.didelez@bristol.ac.uk}
      \ead[label=e4,url]{tinyurl.com/2uuteo8}}
    \address{Department of Mathematics\\University of Bristol\\
      University Walk\\
      Bristol BS8 1TW\\UK\\
      \printead{e3}\\ \printead{e4}}
    
    \runauthor{A. P. Dawid and V. Didelez}
  \end{aug}

\begin{abstract}
  We consider the problem of learning about and comparing the
  consequences of dynamic treatment strategies on the basis of
  observational data.  We formulate this within a probabilistic
  decision-theoretic framework.  Our approach is compared with related
  work by Robins and others: in particular, we show how Robins's
  `$G$-computation' algorithm arises naturally from this
  decision-theoretic perspective.  Careful attention is paid to the
  mathematical and substantive conditions required to justify the use
  of this formula.  These conditions revolve around a property we term
  {\em stability\/}, which relates the probabilistic behaviours of
  observational and interventional regimes.  We show how an assumption
  of `sequential randomization' (or `no unmeasured confounders'), or
  an alternative assumption of `sequential irrelevance', can be used
  to infer stability.  Probabilistic influence diagrams are used to
  simplify manipulations, and their power and limitations are
  discussed.  We compare our approach with alternative formulations
  based on causal DAGs or potential response models.  We aim to show
  that formulating the problem of assessing dynamic treatment
  strategies as a problem of decision analysis brings clarity,
  simplicity and generality.
\end{abstract}

\begin{keyword}[class=AMS]
  \kwd[Primary ]{} \kwd{62C05} \kwd[; secondary ]{62A01}
\end{keyword}

\begin{keyword}
  \kwd{Causal inference} \kwd{$G$-computation} \kwd{Influence diagram}
  \kwd{Observational study} \kwd{Potential response} \kwd{Sequential
    decision theory} \kwd{Stability}
\end{keyword}

\received{\smonth{10} \syear{2010}}

\tableofcontents

\end{frontmatter}

\section{Introduction}
\label{sec:introduction}

Many important practical problems involve sequential decisions, each
chosen in the light of the information available at the time,
including in particular the observed outcomes of earlier decisions.
As an example, consider long-term anticoagulation treatment, as often
given after events such as stroke, pulmonary embolism or deep vein
thrombosis.  The aim is to ensure that the patient's prothrombin time
(INR) is within a target range (which may depend on the diagnosis).
Patients on this treatment are monitored regularly, and when their INR
is outside the target range the dose of anticoagulant is increased or
decreased, so that the dose at any given time is a function of the
previous INR observations.  Despite the availability of limited
guidelines for adjusting the dose, the quality of anticoagulation
control achieved is often poor \cite{henderson}.  Another example is
the question of when to initiate antiretroviral therapy for an
HIV-1-infected patient. The CD4 cell count at which therapy should be
started is a central unresolved issue.  Preliminary findings indicate
that treatment should be initiated when the CD4 cell count drops below
a certain level, \ie\ treatment should be a function of the patient's
previous CD4 count history \cite{sterneetal:09}.

In general, any well-specified way of adjusting the choice of the next
decision (treatment or dose to administer) in the light of previous
information constitutes a dynamic decision (or treatment) {\em
  strategy\/}.  There will typically be an enormous number of
strategies that could be thought of.  Researchers would like to be
able to evaluate and compare these and, ideally, choose a strategy
that is optimal according to a suitable criterion \cite{susan}.  In
many applications, such as the examples given above, it is unlikely
that we will have access to large random samples of patients treated
under each one of the strategies under consideration. At best, the
data available will have been gathered in controlled clinical trials,
but often we will have to content ourselves with data from
uncontrolled observational studies, with, for example, the treatments
being selected by doctors according to informal criteria that we do
not know. The key question we address in the present paper is: Under
what conditions, and how, could the available data be used to
evaluate, compare, and hence choose among, the various decision
strategies?  When a given strategy can be evaluated from available
data it will be termed {\em identifiable\/}.

In principle, our problem can be formulated, represented and solved
using the machinery of sequential decision theory, including decision
trees and influence diagrams \cite{raiffa,oliver:smith:90} --- and
this is indeed the approach that we shall take in this paper.
However, this machinery does not readily provide us with an answer to
the question of when data obtained, for example, from an observational
study will be sufficiently informative to identify a given strategy.
Here, we shall be concerned only with issues around potential biases
in the data, rather than their completeness.  Thus wherever necessary
we suppose that the quantity of data available is sufficient to
estimate, to any desired precision, the parameters of the process that
actually produced those data.  However, that process might still
differ from that in the new decision problem at hand.  We shall
therefore propose simple and empirically meaningful conditions (which
can thus be meaningfully criticised) under which it is appropriate and
possible to make use of the available parameter estimates, and we
shall develop formulae for doing this.  These conditions will be
termed {\em stability\/} due to the way they relate observational and
interventional regimes. We shall further discuss how one might justify
this stability condition by including unobservable variables into the
decision theoretic framework, and by using influence diagrams.

Our proposal is closely related to the seminal work of Robins
\cite{jr:mm,jr:cma,jmr:hsrm,jmr:lns}.  Much of \textcite{jr:mm} takes
an essentially decision theoretic approach, while also using the
framework of structured tree graphs as well as potential responses
(and later using causal direct acyclic graphs (DAGs), see
\textcite{jmr:lns}).  He shows that under conditions linking
hypothetical studies, where the different treatment strategies to be
compared are applied, identifiability can be achieved. Robins calls
these conditions {\em sequential randomization\/} (and later {\em no
  unmeasured confounding\/}, see \eg\ \textcite{robins:bka92}).  While
these are often formalised using potential responses, a closer
inspection of \textcite{jr:mm} (or especially \textcite{jmr:lns})
reveals that all that is needed is an equality of conditional
distributions under different regimes, which is what our stability
conditions state explicitly.  Furthermore, \textcite{jr:mm} introduces
the $G$-computation algorithm as a method to evaluate a sequential
strategy, and contrasts it with traditional regression approaches that
yield biased results even when stability or sequential randomization
holds \cite{robins:bka92}.  We shall demonstrate below that, assuming
stability, this $G$-computation algorithm arises naturally out of our
decision-theoretic analysis, where it can be recognized as a version
of the fundamental `backward induction' recursion algorithm of dynamic
programming.

\subsection{Conditional independence}
\label{sec:language}
The technical underpinning for our decision-theoretic formulation is
the application of the language and calculus of {\em conditional
  independence\/} \cite{apd:CIST,apd:infdiags} to relate observable
variables of two types: `random' variables and `decision' (or
`intervention') variables.  This formalism is used to express
relationships that may be assumed between the probabilistic behaviour
of random variables under differing regimes (\eg, observational and
interventional).  Nevertheless, although it does greatly clarify and
simplify analysis, this particular language is not indispensable:
everything we do could, if so desired, be expressed directly in terms
of relationships between probability distributions for observable
variables.  Thus no essential additional ingredients are being added
to the standard formulation of statistical decision theory.

In many cases the conditional independence relations we work with can
be represented by means of a graphical display: the {\em influence
  diagram\/} (ID).  Once again, although enormously helpful this is,
in a formal sense, only an optional extra.  Moreover, although we pay
special attention to problems that can be represented by influence
diagrams, there are yet others, still falling under our general
approach, where this is not possible.

Inessential though these ingredients are, we nevertheless suggest that
it is well worth the effort of mastering the basic language and
properties, both algebraic and graphical, of conditional independence.
In particular, these allow very simple derivations of the logical
consequences of assumptions made \cite{apd:CIST,ldll}.

\subsection{Overview}
\label{sec:overview}
In \S\S~\ref{sec:multi} and \ref{sec:regimes} we set out the basic
ingredients of our problem and our notation.
Section~\ref{sec:estimation} identifies a simple recursion that can be
used to calculate the consequence of applying a given treatment regime
when the appropriate probabilistic ingredients are available.  In
\S~\ref{sec:ingred} we consider how these ingredients might be come
by, and show that the simple {\em stability\/} condition mentioned
above allows estimation of these ingredients --- and thus, by
application of the procedure of {\em $G$-recursion\/}, of the overall
consequence.  In \S\S~\ref{sec:ext} and \ref{sec:ID} we consider how
one might justify this stability condition, starting from a position
(`extended stability') that might sometimes be more defensible, and
relate various sets of sufficient conditions for this to properties of
influence diagrams.  Section~\ref{sec:moregen} develops more general
conditions, similar to \textcite{jr:cma} and \textcite{jmr:lns}, under
which $G$-recursion can be justified, while \S~\ref{sec:vdproof}
addresses the question of finding an ordering of the involved
variables suitable to carry out $G$-recursion.  Finally
\S\ref{sec:potresp} shows how analyses based on the alternative
formalism of {\em potential responses\/} can be related mathematically
to our own development.

\section{A multistage decision problem}
\label{sec:multi}
We are concerned with a sequential data-gathering and decision-making
process, progressing through a discrete sequence of stages.  The
archetypical context is that of a sequence of medical treatments
applied to a patient over time, each taking into account any interim
responses or adverse reactions to earlier treatments, such as the
anticoagulation treatment for stroke patients or the decision of when
to start antiretroviral therapy for HIV patients.  We shall sometimes
use this language.

Associated with each patient are two sets of variables: ${\cal L}$,
the set of {\em observable variables}, and ${\cal A}$, the set of {\em
  action variables\/}.  The variables in ${\cal A}$ can, in principle,
be manipulated by external intervention, while those in ${\cal L}$ are
generated and revealed by Nature.  The variables in ${\cal L} \cup
{\cal A}$ are termed {\em domain variables\/}.  There is a
distinguished variable $Y\in{\cal L}$, the {\em response variable\/},
of special concern.

A specified sequence ${\cal I}:= (L_1, A_1,\ldots, L_N, A_N, L_{N+1}
\equiv Y)$, where $A_i\in{\cal A}$ and the $L_i$ are disjoint subsets
of ${\cal L}$, defines the {\em information base\/}.  The
interpretation is that the variables arise or are observed in that
order; $L_i$ represents (possibly multivariate, generally
time-dependent) patient characteristics or other variables over which
we have no control, observable between times $i-1$ and $i$; $A_i$
describes the treatment action applied to the patient at time $i$; and
$Y$ is the final `response variable' of primary interest.

For simplicity we suppose throughout that all these variables exist
and can be observed for every patient.  Thus we do not directly
consider cases where, \eg, $Y$ is time to death, which might occur
before some of the $L$'s and $A$'s have had a chance to materialize.
However our analyses could readily be elaborated to handle such
extensions.

When the aim is to control $Y$ through appropriate choices for the
action variables $(A_i)$, any principled approach will involve making
comparisons, formal or informal, between the implied distributions of
$Y$ under a variety of possible strategies for choosing the $(A_i)$.
For example, we might have specified a loss $L(y)$ associated with
each outcome $y$ of $Y$, and desire to minimise its expectation
$\E\{L(Y)\}$.%
\footnote{%
  \label{fn:loss}
  Realistically the loss could also depend on the values of
  intermediate variables, \eg\ if these relate to adverse drug
  reactions.  Such problems can be treated by redefining $Y$ as the
  overall loss suffered (at any rate so long as this loss does not
  depend on other, unobserved, variables.)  } %
Any such decision problem can be solved as soon as we know the
relevant distributions for $Y$ \cite[Section~6]{apd:cinfer}.

The simplest kind of strategy is to apply some fixed pre-defined
sequence of actions, irrespective of any observations on the patient:
we call this a {\em static\/} or {\em unconditional\/} strategy
(\textcite{pearl:book} terms it {\em atomic\/}).  However in realistic
contexts static strategies, which do not take any account of accruing
information, will be of little interest.  In particular, under a
decision-theoretically optimal strategy the action to be taken at any
stage must typically be chosen to respond appropriately to the data
available at that stage \cite{jmr:hsrm,susan}.

A {\em non-randomized dynamic treatment strategy\/} (with respect to a
given information base ${\cal I}$) is a rule that determines, for each
stage $i$ and each configuration (or {\em partial history\/}) $h_i :=
(l_1, a_1, \ldots, a_{i-1}, l_i)$ for the variables $(L_1, A_1,
\ldots, A_{i-1}, L_i)$ available prior to that stage, the value $a_i$
of $A_i$ that is then to be applied.
  
Any decision-theoretically optimal strategy can always be chosen to be
non-randomized.  Nevertheless, for added generality we shall also
consider {\em randomized\footnote{More correctly, these correspond to
    what are termed {\em behavioral rules\/} in decision theory
    \cite{ferguson:book}} dynamic treatment strategies\/}.  Such a
strategy determines, for each stage $i$ and associated partial history
$h_i$, a probability distribution for $A_i$, describing the random way
in which the next action $A_i$ is to be generated.  When every such
randomization distribution is degenerate at a single action this
reduces to a non-randomized strategy.

Suppose now we wish to compare a number of such strategies.  If we
knew or could estimate the full probabilistic structure of all the
variables under each of these, we could simply calculate and compare
directly the various distributions for the response $Y$.  As outlined
in the introduction, our principal concern in this paper is how to
obtain such distributional knowledge, when in many cases the only data
available will have been gathered under purely observational or other
circumstances that might be very different from the strategies we want
to compare.  To clarify the potential difficulties, consider a
statistician or scientist S, who has obtained data on a collection of
variables for a large number of patients.  She wishes to use her data,
if possible, to identify and compare the consequences of various
treatment interventions or policies that might be contemplated for
some new patient.  A major complication, and the motivation for much
work in this area, is that S's observational data will often be
subject to `confounding'.  For example, S's observations may include
actions $(A_i)$ that have been determined by a doctor D, partly on the
basis of additional private information D has about the patient, over
and above the variables S has measured.  Then knowledge of the fact
that D has selected an act $A_i = a_i$, by virtue of that being
correlated with unobserved private information D has that may also be
predictive of the response $Y$, could affect the distribution of $Y$
in this {\em observational\/} regime in a way different from what
would occur if D had no such private information, or if S had herself
chosen the value of $A_i$.  In particular, without giving careful
thought to the matter we cannot simply assume that probabilistic
behaviour seen under the observational regime will be directly
relevant to other, \eg interventional, regimes of interest.

\section{Regimes and consequences}
\label{sec:regimes}
In general, we consider the distribution of all the variables in the
problem under a variety of different regimes, possibly but not
necessarily involving external intervention.  For example, these might
describe different locations, time-periods, or contexts in which
observations can be made.  For simplicity we suppose that the domain
variables are the same for all regimes.  Formally, we introduce a {\em
  regime indicator\/}, $\sigma$, taking values in some set $\cs$,
which specifies which regime is under consideration --- and thus which
(known or unknown) joint distribution over the domain variables ${\cal
  L}\cup{\cal A}$ is operating.  Thus $\sigma$ has the logical status
of a parameter or decision variable, rather than a random variable.
We think of the value $s$ of $\sigma$ as being determined externally,
before any observations are made; all probability statements about the
domain variables must then be explicitly or implicitly conditional on
the value of $\sigma$.  We use \eg $p( y \mid x \,;\, s)$ to denote
the conditional density for $Y$, at $y$, given $X = x$, under regime
$\sigma = s$.  In order to side-step measure-theoretic subtleties, we
shall confine attention to the case that all variables considered are
discrete; in particular, the terms `distribution' or `density' should
be interpreted as denoting a probability mass function.  However, the
basic logic of our arguments does extend to more general cases (albeit
with some non-trivial technical complications to handle null events.)

If we know $p(y;s)$ for all $y$, we can determine, for any function
$k(\cdot)$, the expectation $\E\{k(Y);s\}$.  Often we shall be
interested in one or a small number of such functions, \eg a loss
function $k(y) \equiv L(y)$.  For definiteness we henceforth consider
a fixed given function $k(Y)$, and use the term {\em consequence\/} of
$s$ to denote the expectation $\E\{k(Y);s\}$ of $k(Y)$ when regime $s$
is followed.

More generally we might wish to focus attention on a subgroup
(typically defined in terms of the pre-treatment information $L_1$),
and compare the various `conditional consequences', given membership
of the subgroup.  Although we do not address this directly here, it is
straightforward to extend our unconditional analysis to this case.

\subsection{Inference across regimes}
\label{sec:across}

In the most usual and useful situation, $\cs=\{\co\}\cup \cs^*$, where
$\co$ is a particular {\em observational regime\/} under which data
have been gathered, and $\cs^*$ is a collection of contemplated {\em
  interventional strategies\/} with respect to the information base
$(L_1, A_1,\ldots, L_N, A_N, Y)$.  We wish to use data collected under
the observational regime $\co$ to identify the consequence of
following any of the strategies $\ce\in\cs^*$.  This means we need to
make inference strictly beyond the available data to what would
happen, in future cases, under regimes that we have not been able to
observe in the past.

It should be obvious, but nonetheless deserves emphasis, that we can
not begin to address this problem without assuming some relationships
between the probabilistic behaviour of the variables across the
differing regimes, both observed and unobserved.  Inferences across
regimes will typically be highly sensitive to the assumptions made,
and the validity of our conclusions will depend on their
reasonableness.  Although in principle any such assumptions are open
to empirical test, using data gathered under all the regimes involved,
this will often be impossible in practice.  In this case, while it is
easy to make assumptions, it can be much harder to justify them.  Any
justification must involve context-dependent considerations, which we
can not begin to address here.  Instead we simply aim to understand
the logical consequences of making certain assumptions.  One message
that could be drawn is: if you don't like the consequences, rethink
your assumptions.

\section{Evaluation of consequences}
\label{sec:estimation}
Writing \eg\ $(L_1, L_2)$ for $L_1 \cup L_2$, we denote $(L_1, \ldots,
L_i)$ by $\overline{L}_i$, with similar conventions for other
variables in the problem.

For any fixed regime $s$, we can specify the joint distribution of
$(L_1,A_1, \ldots, L_N, A_N, Y)$, when ${\sigma} = {s}$, in terms of
its sequential conditional distributions for each variable, given all
earlier variables.  These comprise:
\begin{enumerate}
\item
  \label{it:li}
  $p(l_i \mid \overline l_{i-1}, \overline a_{i-1}\,;\, s)$ for $i =
  1, \ldots, N$.
\item
  \label{it:ai} $p(a_i \mid \overline l_{i}, \overline a_{i-1}\,;\,
  s)$ for $i = 1, \ldots,N$.
\item
  \label{it:y}
  $p(y \mid \overline l_N, \overline a_N\,;\, s)$.
\end{enumerate}
Note that \itref{y} can also be considered as the special case of
\itref{li} for $i = N+1$.

With $l_{N+1} \equiv y$, we can factorize the overall joint density
as:
\begin{eqnarray}
  \label{eq:factor0}
  p(y, \overline{l}, \overline a\,;\, {s}) = \left\{\prod_{i=1}^{N+1}
    p(l_i \mid \overline{l}_{i-1},  \overline{a}_{i-1}\,;\, {s})\right\} \times
  \left\{\prod_{i=1}^N p(a_i \mid \overline{l}_{i},
    \overline{a}_{i-1}\,;\, {s})\right\}.
\end{eqnarray}

If we know all the terms in \eqref{factor0}, we can simply sum out
over all variables but $l_{N+1} \equiv y$ to obtain the desired
distribution $p(y;s)$ of $Y$ under regime $s$, from which we can in
turn compute the consequence $\E\{k(Y);s\}$.

Alternatively, and more efficiently, this calculation can be
implemented recursively, as follows.  Let $h$ denote a partial
history, of the form $(\overline l_i, \overline a_{i-1})$ or
$(\overline l_i, \overline a_i)$ ($0 \leq i \leq N)$.  We also include
the `null' history $\emptyset$, and `full' histories $(\overline l_N,
\overline a_N, y)$.  We denote the set of all partial histories by
${\cal H}$.  Fixing the regime $s$, define a function $f$ on ${\cal
  H}$ by:
\begin{equation}
  \label{eq:fi0}
  f(h) := \E\{k(Y) \mid h\,;\, s\}.
\end{equation}
Simple application of the laws of probability yields:
\begin{eqnarray}
  \label{eq:mainrecura}
  f(\overline l_i, \overline a_{i-1}) &=& \sum_{a_i} p(a_i  \mid
  \overline l_{i}, \overline a_{i-1}\,;\, s)  
  \times f(\overline l_i, \overline a_i)\\
  \label{eq:mainrecurl}
  f(\overline l_{i-1}, \overline a_{i-1})& =& \sum_{l_i}
  p(l_i  \mid  \overline l_{i-1}, \overline a_{i-1}\,;\,  s) \times
  f(\overline l_i, \overline a_{i-1}).  
\end{eqnarray}
For $h$ a full history $(\overline l_N, \overline a_N, y)$, we have
$f(h) = k(y)$.  Using these as starting values, by successively
implementing \eqref{mainrecura} and \eqref{mainrecurl} in turn,
starting with \eqref{mainrecurl} for $i = N+1$ and ending with
\eqref{mainrecurl} for $i=1$, we step down through ever shorter
histories until we have computed $f(\emptyset) = \E\{k(Y)\,;\,s\}$,
the consequence of regime $s$.\footnote{More generally (see
  footnote~\ref{fn:loss}), we could consider a function $Y^*$ of
  $(\overline L_N,\overline A_N, Y)$.  Starting now with $f(\overline
  l_N, \overline a_N, y) := Y^*(\overline l_N, \overline a_N, y)$, we
  can apply the identical steps to arrive at $f(\emptyset) =
  \E\{Y^*\,;\,s\}$.  In particular we can evaluate the expected
  overall loss under $s$, even when the loss function depends on the
  full sequence of variables.}

The recursion expressed by \eqref{mainrecura} and \eqref{mainrecurl}
is exactly that underlying the `extensive form' analysis of sequential
decision theory (see \eg\ \textcite{raiffa}).  In particular, under
suitable further conditions we can combine this recursive method for
evaluation of consequences with the selection of an optimal strategy,
when it becomes {\em dynamic programming\/}.  This `step-down
histories' approach also applies just as readily to more general
probability or decision trees, where the length of the history, and
even the variables entering into it, can vary with the path followed.
We do not consider such extensions here, but they raise no new issues
of principle.

When $s$ is a non-randomized strategy, the distribution of $A_i$ given
$\overline L_i = \overline l_i$, when $\sigma= s$, is degenerate, at
$a_i = g_i = g_i(\overline l_i\,;\, s)$, say, and the only randomness
left is for the variables $(L_1, \ldots, L_N, Y)$.  We can now
consider $f(h)$ as a function of only the $(l_i)$ appearing in $h$,
since, under $s$, these then determine the $(a_i)$.  Then
\eqref{mainrecura} holds automatically, while \eqref{mainrecurl}
becomes:
\begin{equation}
  \label{eq:determrecur}
  f(\overline l_{i-1}) = \sum_{l_i}\,
  p(l_i  \mid  \overline l_{i-1}, \overline g_{i-1};
  s)  \times f( \overline l_i).
\end{equation}
When, further, the regime $s$ is static, each $g_i$ in the above
expressions reduces to the fixed action $a_i^*$ specified by $s$.

We remark that the conditional distributions in \itref{li}--\itref{y}
and \eqref{fi0} are undefined when the conditioning event has
probability 0 under $s$.  The overall results of recursive application
of \eqref{mainrecura} and \eqref{mainrecurl} will not depend on how
such ambiguities are resolved.  However, for later convenience we
henceforth assume that $f(h)$ in \eqref{fi0} is defined as 0 whenever
$p(h\,;\, s) = 0$.  Note that this property is preserved under
\eqref{mainrecura} and \eqref{mainrecurl}.

\section{Identifying the ingredients}
\label{sec:ingred}

In order for the statistician S to be able to apply the above
recursive method to calculate the consequence of some contemplated
regime $s$, she needs to know all the ingredients \itref{li},
\itref{ai} and \itref{y}.  How might such knowledge be attained?

\subsection{Control strategies}
\label{sec:control}
Consider first the term $p(a_i \mid \overline l_{i}, \overline
a_{i-1}\,;\, s)$ in \itref{ai}, as needed for \eqref{mainrecura}.  It
will often be the case that for the regimes $s$ of interest this is
known {\em a priori\/} to the statistician S for all $i$.  For
instance we might be interested in strategies for initiating
antiretroviral treatment of HIV patients as soon as the CD4 count has
dropped below a given value $c$. The strategy therefore fully
determines the value of the binary $A_i$ given the previous covariate
history $\overline l_{i}$ as long as this includes information on the
CD4 counts.  In such a case we shall call $s$ a {\em control
  strategy\/} (with respect to the information base ${\cal I} = (L_1,
A_1,\ldots, L_N, A_N, Y)$).  In particular this will typically be the
case when $s$ is a (possibly randomized) dynamic strategy, as
introduced in \secref{multi}.

\subsection{Stability}
\label{sec:simple}
More problematic is the source of knowledge of the conditional density
$p(l_i \mid \overline l_{i-1}, \overline a_{i-1}\,;\, s)$ in
\itref{li} as required for \eqref{mainrecurl} (including, as a special
case, that of $p(y \mid \overline l_N, \overline a_N\,;\, s)$ in
\itref{y}).

If we observed many instances of regime $s$, we may be able to
estimate this directly; but typically we will be interested in
assessing the consequences of various contemplated regimes (\eg\ 
control strategies) that we have never yet observed.  The problem then
becomes: under what conditions can we use probability distributions
assessed under one regime to deduce the required conditional
probabilities, \itref{li} and \itref{y}, under another?

In the application of most interest, we have $\cs=\{\co\}\cup \cs^*$,
where $\co$ is an observational regime under which data have been
gathered, and $\cs^*$ is a collection of contemplated interventional
strategies.  If we can use data collected under the observational
regime $\co$ to identify the consequence of following any of the
strategies $\ce\in\cs^*$, we will be in a position to compare the
consequences of different interventional strategies (and thus, if
desired, choose an optimal one) on the basis of data collected in the
single regime $\co$.

In general, the distribution of $L_i$ given $(\overline L_{i-1},
\overline A_{i-1})$ will depend on which regime is in operation.  Even
application of a control strategy might well have effects on the joint
distribution of all the variables, beyond the behaviour it directly
specifies for the actions.  For example, consider an educational
experiment in which we can select certain pupils to undergo additional
home tutoring.  Such an intervention can not be imposed without
subjecting the pupil and his family to additional procedures and
expectations, which would probably be different if the decision to
undergo extra tutoring had come directly from the pupil, and possibly
different again if it had come from the parents.  Consequently we can
not necessarily assume that the distribution of $L_i$ given
$(\overline L_{i-1}, \overline A_{i-1})$ assessed under the
observational regime will be the same as that for an interventional
strategy, or that it would be the same for different interventional
strategies.

It will clearly be helpful when we {\em can\/} impose this assumption
--- and so be able to identify the required interventional
distributions of $L_i$ given $(\overline L_{i-1}, \overline A_{i-1})$
with those assessed under the observational regime.  We formalize this
assumption as follows:

\begin{defn}
  \label{def:expgen}
  We say that the problem exhibits {\em simple stability\/}, with
  respect to the information base ${\cal I} = (L_1,A_1, \ldots, L_N,
  A_N, Y)$ and the set $\cs
  $ of regimes if, with $\sigma$ denoting the non-random regime
  indicator taking values in $\cs$:
  \begin{equation}
    \label{eq:expci2}
    \ind {L_i} {\sigma} {(\overline L_{i-1}, \overline A_{i-1})} \quad (i =
    1, \ldots, N+1).
  \end{equation}
\end{defn}

Here and throughout, we use the notation and theory of {\em
  conditional independence\/} introduced by \textcite{apd:CIST}, as
generalized as in \textcite{apd:infdiags} to apply also to problems
involving decision or parameter variables.  In words,
condition~\eqref{expci2} asserts that the stochastic way in which
$L_i$ arises, given the previous values of the $L$'s and $A$'s, should
be the same, irrespective of which regime in $\cs$ is in operation.
More precisely, expressed in terms of densities, \eqref{expci2}
requires that, for each $i = 1, \ldots, N+1$, there exist some common
conditional density specification $q(L_i = l_i \mid \overline{L}_{i-1}
= \overline{l}_{i-1}, \overline{A}_{i-1} = \overline{a}_{i-1})$ such
that, for each $s\in\cs$,
\begin{equation}
  \label{eq:as}
  p(L_i = l_i \mid \overline{L}_{i-1} = \overline{l}_{i-1},
  \overline{A}_{i-1} =  \overline{a}_{i-1}; s) = 
  q(L_i = l_i \mid \overline{L}_{i-1} = \overline{l}_{i-1},
  \overline{A}_{i-1} =  \overline{a}_{i-1})
\end{equation}
{\em whenever the conditioning event has positive probability under
  regime $s$.\/}

As will be described further in \secref{ID} below, it is often helpful
(though never essential) to represent conditional independence
properties graphically, using the formalism of {\em influence
  diagrams\/} (IDs): such diagrams have very specific semantics, and
can facilitate logical arguments by displaying implied properties in a
particularly transparent form \cite{apd:infdiags}.  The appropriate
graphical encoding of property \eqref{expci2} for $i =$ 1, 2 and 3 is
shown in \figref{simpdiag}.  The specific property \eqref{expci2} is
represented by the {\em absence\/} of arrows from $\sigma$ to $L_1$,
$L_2$, and $Y \equiv L_3$.  For general $N$ we simply supplement the
complete directed graph on $(L_1, A_1, \ldots, L_N, A_N, Y)$ with an
additional regime node $\sigma$, and an arrow from $\sigma$ to each
$A_i$.

\begin{figure}[h]
  \begin{center}
    \resizebox{2.8in}{!}{\includegraphics{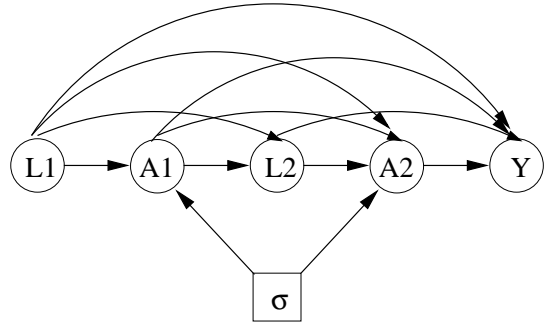}}
    \caption{Influence diagram: stability}
    \label{fig:simpdiag}
  \end{center}
\end{figure}

\subsubsection{Some comments}
\label{sec:comm}

An important question is how we should assess whether
property~\eqref{expci2} holds in any given situation.  It could in
principle be tested empirically, if we could collect data under all
regimes.  In practice this is usually impossible, and other arguments
for or against its appropriateness would be brought to bear.  Whether
or not the simple stability property can be regarded as appropriate in
any application will depend on the overall context of the problem.  In
particular, it will depend on the specific information base involved.
For example, if $\ce$ is a control strategy with respect to S's
information base, and $\co$ an observational regime under which the
doctor D chooses the $(A_i)$ on the basis of private information not
represented in S's information base, possibly associated with $L_i$,
then, for $\cs = \{\co,\ce\}$, we might well expect \eqref{expci2} to
be violated.  This is often described as (potential) {\em
  confounding\/}.

The simple stability property \eqref{expci2} is our version of a
condition termed `sequential randomization' \cite{jr:mm,jmr:lns} or
`no unmeasured confounding'
\cite{robins:bka92,robins/hernan/brumback:marg_struct} or `sequential
ignorability' \cite{jmr:sbs00}.  The connexions become particularly
clear when comparing \eqref{expci2} with the equalities derived in
Theorem~3.1 of \textcite{jmr:lns}, which we consider in more detail in
\secref{conn} below.  These alternative names suggest particular
situations where stability should be satisfied, such as when the data
have been gathered under an observational regime where the actions
were indeed physically sequentially randomized; or when S's
information base contains all the information the doctor D has used in
choosing the $(A_i)$.  However, we emphasise that our
property~\eqref{expci2} can be meaningfully considered even without
referring to any `potential confounder' variables; and that if (as in
\secref{ext} below) we do choose to introduce such further variables
to help us assess whether \eqref{expci2} holds, nevertheless the
property itself must hold or fail quite independently of which
additional variables (if any) are considered.

In any case, because stability is a property of the relationship
between different regimes, it can never be empirically established on
the basis of data collected under only one (\eg, observational)
regime, nor can it be deduced from properties assumed to hold for just
one such regime.

\subsubsection{Positivity}
\label{sec:positivity}
The purpose of invoking simple stability (with respect to $\cs =
\{\co\} \cup \cs^*$) is to get a handle on \eqref{mainrecurl} for an
unobserved interventional strategy $s=\ce\in\cs^*$, using data
obtained in the observational regime $\co$.  Intuitively, under simple
stability we can replace $p(l_i \mid \overline l_{i-1}, \overline
a_{i-1}; \ce)$ by $ p(l_i \mid \overline l_{i-1}, \overline a_{i-1};
\co)$, which is estimable from the observational data.  However, some
care is needed on account of the positivity qualification following
\eqref{as}.  If, for example, we want to assess the consequence of a
static interventional strategy $\ce$, which always applies some
pre-specified action sequence $\overline a^*$, we clearly will be
unable to do so using data from an observational regime in which the
probability of obtaining that particular sequence of actions is zero.
(Pragmatically it may still be difficult to do so if that probability
is non-zero but so small that we are unable to estimate it well from
available observational data.  However we ignore that difficulty here,
supposing that the data are sufficiently extensive that we can indeed
get good estimates of all probabilities under $\co$).

In order to avoid this problem, we impose the {\em positivity
  (absolute continuity) condition\/}:
\begin{defn}
  \label{def:positive}
  We say the problem exhibits {\em positivity\/} if, for any
  $\ce\in\cs^*$, the joint distribution of $(\overline L_N, \overline
  A_N, Y)$ under $P_\ce$ is absolutely continuous with respect to that
  under $P_\co$, \ie\ 
  \begin{equation}
    \label{eq:pos}
    p(E ; \ce) > 0 \Rightarrow p(E ; \co)> 0
  \end{equation}
  for any event $E$ defined in terms of $(\overline L_N, \overline
  A_N, Y)$.  We write this as $P_\ce \ll P_\co$.
\end{defn}
In our discrete set-up, it is clearly enough to demand \eqref{pos}
whenever $E$ comprises a single sequence $(\overline l_N, \overline
a_N, y)$.  Denoting by ${\cal O}$, ${\cal E}$ the sets of partial
histories having positive probability under, respectively, regimes
$\co$ and $\ce$, we can restate \eqref{pos} as
\begin{equation}
  \label{eq:eo}
  {\cal E}\subseteq{\cal O}.
\end{equation}

\subsection{$G$-recursion}
\label{sec:grec0}
Let $\ce\in\cs^*$.  Given enough data collected under $\co$ we can
identify $p(l_i \mid \overline{l}_{i-1}, \overline{a}_{i-1}\,;\, \co)$
($i = 1, \ldots, N+1$) for $(\overline l_{i-1}, \overline
a_{i-1})\in{\cal O}$.  Under simple stability \eqref{as} and
positivity \eqref{eo}, this will also give us $p(l_i \mid
\overline{l}_{i-1}, \overline{a}_{i-1}\,;\, \ce)$ ($i = 1, \ldots,
N+1$) for all $(\overline l_{i-1}, \overline a_{i-1})\in{\cal E}$.
If, further, $\ce$ is a control strategy, then using the known form
for $p(a_i \mid \overline l_{i}, \overline a_{i-1}\,;\, \ce)$
($(\overline l_{i}, \overline a_{i})\in{\cal E}$), we have all the
ingredients to apply \eqref{mainrecura} and \eqref{mainrecurl} and
thus identify the consequence of regime $\ce$ from data collected
under $\co$.

Specifically, we have
\begin{eqnarray}
  \label{eq:standrecura}
  f(\overline l_i, \overline a_{i-1}) &=& \sum_{a_i} p(a_i  \mid
  \overline l_{i}, \overline a_{i-1}\,;\, \ce)  
  \times f(\overline l_i, \overline a_i)\\
  \label{eq:standrecurl}
  f(\overline l_{i-1}, \overline a_{i-1})& =& \sum_{l_i}
  p(l_i  \mid  \overline l_{i-1}, \overline a_{i-1}\,;\,  \co) \times
  f(\overline l_i, \overline a_{i-1}).  
\end{eqnarray}
We start the recursion with
\begin{displaymath}
  f( \overline l_N, \overline a_N) \equiv \E\{k(Y) \mid \overline l_N,
  \overline a_N\,;\, \ce\} = 
  \left\{  
    \begin{array}[c]{ll}
      \E\{k(Y) \mid \overline l_N, \overline a_N\,;\, \co\} & \mbox{if }
      (\overline l_N, \overline a_N)\in{\cal E}\\
      0 & \mbox{otherwise}
    \end{array}
  \right.
\end{displaymath}
(using simple stability for $i= N+1$), and exit with the desired
interventional consequence $f(\emptyset) \equiv \E\{k(Y) \,;\, \ce\}$.

We refer to the above method as {\em $G$-recursion\/}.\footnote{%
  Cases in which simple stability may not hold but we can nevertheless
  still apply $G$-recursion are considered in
  Section~\ref{sec:moregen}.%
}

For the case that $\ce$ is a non-randomized strategy, $G$-recursion
can be based on \eqref{determrecur}, becoming
\begin{equation}
  \label{eq:determstandrecur}
  f(\overline l_{i-1}) = \sum_{l_i}\,
  p(l_i \mid \overline l_{i-1}, \overline g_{i-1}; \co) \times f( \overline l_i),
\end{equation}
starting with $f(\overline l_N) = \E\{k(Y) \mid \overline l_N,
\overline g_N\,;\, \co\}$.  The {\em $G$-computation formula\/}
\cite{jr:mm} is the algebraic formula for $f(\emptyset)$ in terms of
$f(\overline l_N)$ that results when we write out explicitly the
successive substitutions required to perform this recursion.

Finally we remark that, when the simple stability property
\eqref{expci2} holds for $(L_1, A_1,\ldots, L_N, A_N, Y)$, it also
holds for $(L_1, A_1, \ldots, L_N, A_N, Y^*)$, where $Y^*$ is any
function of $(L_1, A_1, \ldots, L_N, A_N, Y)$.  For $i \leq N$ there
is nothing new to show, while \eqref{expci2} for $i = N+1$ follows
easily for $Y^*$ when it holds for $Y$, using general properties of
conditional independence \cite{apd:CIST}.  It is also easy to see that
when positivity, \defref{positive}, holds for $(\overline L_N,
\overline A_N, Y)$ it likewise holds for $(\overline L_N, \overline
A_N, Y^*)$.  Consequently, under the same conditions that allow
$G$-recursion to compute the interventional distribution of $Y$, we
can use it to compute that of $Y^*$.  In particular (see
footnote~\ref{fn:loss}), this will allow us to evaluate the expected
loss of applying $\ce$, even when the loss function depends on all of
$(\overline L_N, \overline A_N, Y)$.

\section{Extended stability}
\label{sec:ext}

We have already alluded to the possibility that, in many applications,
the simple stability assumption \eqref{expci2} might not be easy to
justify directly.  This might be the case, in particular, when we are
concerned about the possibility of `confounding effects' due to
unobserved influential variables.

In such a case we might proceed by constructing a more detailed model,
incorporating a collection ${\cal U}$ of additional, possibly
unobserved, variables; and investigate its implications.  These
unobserved variables might be termed `sequential (potential)
confounders'.  Under certain additional assumptions to be discussed
below, we might then be able to deduce that simple stability does,
after all, apply.  This programme can be helpful when the assumptions
involving the additional variables are easier to justify than
assumptions referring only to the variables of direct interest.  We
here initially express these additional assumptions purely
algebraically, in terms of conditional independence; in \secref{ID} we
shall conduct a parallel analysis utilising influence diagrams to
facilitate the expression and manipulation of the relevant conditional
independencies.

Reasoning superficially similar to ours has been conducted by
\textcite{pearl/robins:uai95} and \textcite{jmr:lns}.  However, that
is mostly based on the assumed existence of a `causal DAG'
representation of the problem.  We once again emphasise that the
simple stability property \eqref{expci2} is always meaningful of
itself, and its truth or falsity can not rely on the possibility of
carrying out such a programme of reduction from a more complex model
including unobservable variables.

\subsection{Preliminaries}

We shall specifically investigate models having a property we term
{\em extended stability\/}.  Such a model again involves a collection
${\cal L}$ of observable domain variables (including a response
variable $Y$) and a collection ${\cal A}$ of action domain variables,
together with a regime indicator variable $\sigma$ taking values in
$\cs = \{\co\} \cup \cs^*$.  But now we also have the collection
${\cal U}$ of unobservable domain variables (for simplicity we suppose
throughout that which variables are observed or unobserved is the same
under all regimes considered).  Let ${\cal I}'$ denote an ordering of
all these observable and unobservable domain variables (typically,
though not necessarily, their time-ordering). As before we assume that
$A_{i-1}$ comes before $A_i$ in this ordering.  We term ${\cal I}'$ an
{\em extended information base\/}. Let $L_i \subseteq{\cal L}$ [resp.,
$U_i \subseteq {\cal U}$] denote the set of observed [resp.,
unobserved] variables between $A_{i-1}$ and $A_i$.

\begin{defn}
  \label{def:es}
  We say that the problem exhibits {\em extended stability\/} with
  respect to the extended information base ${\cal I}'$ and the set
  $\cs$ of regimes if, for $i=1,\ldots, N+1$,
  \begin{equation}
    \label{eq:es}
    \ind {(U_i,L_i)} \sigma ({\overline{U}_{i-1}, \overline{L}_{i-1},
      \overline{A}_{i-1}}). 
  \end{equation}
\end{defn}
(If the ($U_i$) were observable, this would be identical with the
definition of simple stability.)

Under extended stability the marginal distribution of $U_1$ is
supposed the same in both regimes, as is the conditional distribution
of $U_2$ given $(U_1, L_1, A_1)$, \etc\@ Similarly, the distributions
of $L_1$ given $U_1$, of $L_2$ given $(U_1, L_1, A_1, U_2)$,\ldots,
and finally of $Y$ ($=L_{N+1}$) given $(U_1, L_1, A_1,\ldots, U_N,
L_N, A_N)$, are all supposed to be independent of the regime
operating.

There is a corresponding extension of \defref{positive}:
\begin{defn}
  \label{def:extpositive}
  We say the problem exhibits {\em extended positivity\/} if, for any
  $\ce\in\cs^*$, $P_\ce \ll P_\co$ as distributions over $(\overline
  L_N, \overline U_N, \overline A_N, Y)$; that is, $p(E ; \ce) > 0
  \Rightarrow p(E ; \co)> 0$ and any event $E$ defined in terms of
  $(\overline L_N, \overline U_N, \overline A_N, Y)$.
\end{defn}

In many problems, though by no means universally, an extended
stability assumption might be regarded as more reasonable and
defensible than simple stability --- so long as appropriate unobserved
variables ${\cal U}$ are taken into account.  For example, this might
be the case if we believed that, in the observational regime, the
actions were chosen by a decision-maker who had been able to observe,
in sequence, some or all of the variables in the problem, including
possibly the $U$'s; and was then operating a control strategy with
respect to this extended information base, so that, when choosing each
action, he was taking account of all previous variables in this
extended sequence, but nothing else.  But even then, as discussed in
\secref{simple}, the extended stability property is a strong
additional assumption, that needs to be justified in any particular
problem.  And again, because it involves the relationships between
distributions under different regimes, it can not be justified on the
basis of considerations or findings that apply only to one regime.

Unobservable variables can assist in modelling the observational
regime and its relationship with the interventional control regimes
under consideration.  But, because they are unobserved, they can not
form part of the information taken into account by such control
regimes.  Thus we shall still be concerned with evaluating --- using
$G$-recursion when possible --- a regime $\ce$ that is a control
strategy with respect to the {\em observable} information base ${\cal
  I} = (L_1, A_1,\ldots, L_N, A_N, Y)$ as introduced in
\secref{control}.  More specifically, in this more general context we
define:

\begin{cond}[Control strategy]
  The regime $\ce$ is a {\em control strategy\/} if, for $i = 1,
  \ldots, N$,
  \label{cond:cont}
  \begin{equation}
    \label{eq:controlpar}
    \ind {A_i} {\overline{U}_i} {(\overline{L}_i,
      \overline{A}_{i-1}\,;\, \ce)} 
  \end{equation}
  and, in addition, the conditional distribution of $A_i$, given
  $(\overline L_{i}, \overline A_{i-1})$, under regime $\ce$, is known
  to the analyst.
\end{cond}
\condref{cont} expresses the property that, under regime $\ce$, the
randomization distribution or other sources of uncertainty about
$A_i$, given all earlier variables, does not in fact depend on the
earlier unobserved variables; and that this conditional distribution
is known.  The condition will hold, in particular, in the important
common case that, under $\ce$, $A_i$ is fully specified as a function
of previous observables.

\subsection{Stability regained}
\label{sec:regained}

When there are unobservables in the problem, the extended positivity
property of \defref{extpositive} will clearly imply the simple
positivity property of \defref{positive}.  However, even when extended
stability holds, the simple stability property, with respect to the
observable information base $(L_1, A_1, \ldots, L_N, A_N, Y)$ from
which (as is a pragmatic necessity) we have had to exclude the
unobserved variables, will typically fail.  But we can sometimes
incorporate additional background knowledge, most usefully expressed
in terms of conditional independence, to show that it does, after all,
hold.

We now describe two sets of additional sufficient (though not
necessary) conditions, either of which will, when appropriate, allow
us to deduce the simple stability property \eqref{expci2} --- and with
it, the possibility of applying $G$-recursion (ignoring the
unobservable variables), as set out in \secref{grec0}.  The results in
this section can be regarded as extending the analysis of
\textcite{apd:infdiags} \S~8.3 (see also
\textcite{hg/apd:aistats2010}) to the sequential setting.

\subsubsection{Sequential randomization}

It has frequently been proposed (\eg, \textcite{jr:mm,jmr:lns}) that
when, under an observational regime, the actions $(A_i)$ have been
physically (sequentially) randomized, then simple stability
\eqref{expci2} will hold.  Indeed, our concept of simple stability has
also been termed `sequential randomization' \cite{jr:mm}. However we
shall be more specific and restrict the term {\em sequential
  randomization\/} to the special case that we have extended stability
and, in addition, \condref{pa1} below holds.  We shall show that these
properties are indeed sufficient to imply simple stability --- but
they are by no means necessary.

So consider now the following condition:
\begin{cond}
  \label{cond:pa1}
  \begin{equation}
    \label{eq:pa1}
    \ind {A_i} {\overline U_i} {(\overline L_i, \overline
      A_{i-1}\,;\, \sigma)} 
    \quad(i = 1, \ldots, N).
  \end{equation}
\end{cond}
This is essentially a discrete-time version of Definition~2~(ii) of
\textcite{parner/arjas}, but with the additional vital requirement
that the unobservable variables ${\cal U}$ involved already be such as
to allow us to assume the extended stability property \eqref{es}.
(Without such an underlying assumption there can be no way of relating
different regimes together.)

\condref{pa1} requires that, for each regime, any earlier unobserved
variables in the extended information base ${\cal I}'$ can have no
further effect on the distribution of $A_i$, once the earlier observed
variables are taken into account.  This will certainly be the case
when, under each regime, treatment assignment, at any stage, is
determined by some deterministic or randomizing device that only has
the values of those earlier observed variables as inputs.  While this
will necessarily hold for a control strategy with respect to the
observed information base, whether or not it is a reasonable
requirement for the observational regime will depend on deeper
consideration of the specific context and circumstances.  It will
typically do so if all information available to and utilised by the
decision-maker (the doctor, for instance) in the observational regime
is included in $\overline L_i$, or, indeed, if the actions $(A_i)$
have been physically randomized within levels of $(\overline L_i,
\overline A_{i-1})$.

\begin{theorem}
  \label{thm:parner1}
  Suppose our model exhibits extended stability.  If in addition
  \condref{pa1} holds, then we shall also have the simple stability
  property \eqref{expci2}.
\end{theorem}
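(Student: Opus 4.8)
The plan is to establish \eqref{expci2} by pure conditional-independence calculus, using only the semi-graphoid properties of $\cip$ recorded by \textcite{apd:CIST} and extended to decision variables by \textcite{apd:infdiags} --- symmetry, decomposition, weak union and contraction --- applied to the two hypotheses \eqref{es} and \eqref{pa1}; this is just the sequential analogue of the reasoning in \textcite{apd:infdiags}, \S~8.3. The idea is that extended stability pins down the regime-behaviour of $(U_i,L_i)$ only when one conditions on the \emph{whole} past $(\overline U_{i-1},\overline L_{i-1},\overline A_{i-1})$, unobservables included, whereas \condref{pa1} is precisely what lets us re-absorb those unobservables, one block at a time, into the conditioning set. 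Concretely I would propagate, by induction on $j$, the auxiliary statement
\[ (Q_j):\qquad \overline U_j\,\cip\,\sigma\mid(\overline L_j,\overline A_j)\qquad(j=0,1,\ldots,N), \]
writing throughout $\overline U_j=(\overline U_{j-1},U_j)$ and similarly for the other sequences. The base case $(Q_0)$ is vacuous, $\overline U_0$ being empty.

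For the inductive step, assume $(Q_{j-1})$. Combining it with \eqref{es} at stage $j$, namely $(U_j,L_j)\,\cip\,\sigma\mid(\overline U_{j-1},\overline L_{j-1},\overline A_{j-1})$, by contraction (with $\sigma$ in the role of the ``left'' variable) gives
\[ (\overline U_j,L_j)\,\cip\,\sigma\mid(\overline L_{j-1},\overline A_{j-1}). \]
Applying \emph{decomposition} to this already yields \eqref{expci2} for the index $i=j$; applying \emph{weak union} instead moves $L_j$ into the conditioning set, giving $\overline U_j\,\cip\,\sigma\mid(\overline L_j,\overline A_{j-1})$. Now read \condref{pa1} at stage $j$ as $\overline U_j\,\cip\,A_j\mid(\sigma,\overline L_j,\overline A_{j-1})$ and apply contraction once more (this time with $\overline U_j$ on the left) to obtain $\overline U_j\,\cip\,(\sigma,A_j)\mid(\overline L_j,\overline A_{j-1})$; a final weak union then delivers $(Q_j)$. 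This closes the induction, and it has produced \eqref{expci2} for $i=1,\ldots,N$ along the way. Running the very first step of the argument once more with $j=N+1$ --- using the now-proved $(Q_N)$ together with \eqref{es} at stage $N+1$, followed by decomposition --- supplies \eqref{expci2} for the remaining index $i=N+1$ (note that \condref{pa1} is not needed here, there being no action $A_{N+1}$), and the theorem is proved.

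I would flag two points requiring care rather than ingenuity. First, all the relations involved --- \eqref{es}, \eqref{pa1} and the target \eqref{expci2} --- are asserted only on conditioning events of positive probability under the regime concerned, so each appeal to contraction and weak union must be checked to remain within the null-event conventions of \textcite{apd:CIST,apd:infdiags}; this is routine but should be stated. Second, it is worth recording where the hypotheses enter: \eqref{es} is used at stages $1,\ldots,N+1$ but \condref{pa1} only at stages $1,\ldots,N$, matching the ranges in their statements. The one genuinely delicate point is organisational: one must pull $L_j$ into the conditioning set (via extended stability) \emph{before} pulling in $A_j$ (via \condref{pa1}). Conditioning on the actions prematurely --- without the corresponding $L$'s alongside --- breaks the argument, reflecting the fact that in the observational regime the realized actions are themselves informative about $\sigma$.
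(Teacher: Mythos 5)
Your proof is correct and is essentially the paper's own argument: the paper runs the same induction with hypothesis $H_i:\ \ind{(L_i,\overline U_i)}{\sigma}{(\overline L_{i-1},\overline A_{i-1})}$, which is exactly what your first contraction produces from $(Q_{j-1})$ and \eqref{es}, and its steps \eqref{sr1}--\eqref{sr3} are your weak-union/contraction chain in the same order. Carrying $(Q_j)$ rather than $H_j$ as the induction hypothesis, and noting explicitly that \condref{pa1} is not needed at stage $N+1$, are only cosmetic reorganisations.
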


\begin{proof}
  Our proof will be based on universal general properties of
  conditional independence, as described by
  \textcite{apd:CIST,apd:enc/ci}.
  
  Let $E_i$, $R_i$, $H_i$ denote, respectively, the following
  assertions:
  \begin{eqnarray*}
    E_i &:&\ind {(L_i,  U_i)} \sigma {(\overline L_{i-1}, \overline U_{i-1}, \overline A_{i-1})}\\
    R_i &:&\ind {A_i} {\overline U_i}{(\overline L_{i}, \overline A_{i-1}; \sigma)}\\
    H_i &:&\ind {(L_i, \overline U_i)} \sigma {(\overline L_{i-1}, \overline A_{i-1})}
  \end{eqnarray*}
  Extended stability is equivalent to $E_i$ holding for all $i$, so we
  assume that; while $R_i$ is just \condref{pa1}, which we are
  likewise assuming for all $i$.  We shall show that these assumptions
  imply $H_i$ for all $i$, which in turn implies $\ind {L_i} \sigma
  {(\overline L_{i-1}, \overline A_{i-1})}$, \ie, simple stability.
  
  We proceed by induction.  Since $E_1$ and $H_1$ are both equivalent
  to $\indo {(L_1, U_1)} \sigma$, $H_1$ holds.
  
  Suppose now $H_i$ holds.  Conditioning on $L_i$ yields
  \begin{equation}
    \label{eq:sr1}
    \ind {\overline U_i} \sigma {(\overline L_{i}, \overline A_{i-1})},
  \end{equation}
  and this together with $R_i$ is equivalent to $\ind {\overline U_i}
  {(A_i, \sigma)} {(\overline L_{i}, \overline A_{i-1})}$, which on
  conditioning on $A_i$ then yields
  \begin{equation}
    \label{eq:sr2}
    \ind {\overline U_i} \sigma {(\overline L_{i}, \overline A_{i})}.
  \end{equation}
  Also, by $E_{i+1}$ we have
  \begin{equation}
    \label{eq:sr3}
    \ind {(L_{i+1},  U_{i+1})} \sigma {(\overline L_{i}, \overline U_{i}, \overline A_{i})}.
  \end{equation}
  Taken together, \eqref{sr2} and \eqref{sr3} are equivalent to
  $H_{i+1}$, so the induction is established.
\end{proof}

\subsubsection{Sequential irrelevance}

Another possible condition is:
\begin{cond}
  \label{cond:pa1a}
  \begin{equation}
    \label{eq:pa1a}
    \ind {L_i} {\overline U_{i-1}} {(\overline L_{i-1}, \overline
      A_{i-1}\,;\, \sigma)} \quad (i = 1, \ldots, N+1).
  \end{equation}
\end{cond}
In contrast to \eqref{pa1}, \eqref{pa1a} does permit the unobserved
variables to date, $\overline U_{i}$, to influence the next action
$A_i$ (which can however only happen in the observational regime), as
well as the current observable $L_i$; but they do not affect the
subsequent development of the $L$'s (including, in particular, the
response variable $Y$).

\begin{theorem}
  \label{thm:seqirr}
  Suppose:
  \begin{enumerate}
\item \label{it:es} Extended stability, \eqref{es}, holds.
\item \label{it:si} Sequential irrelevance, \condref{pa1a}, holds for
      the observational regime $\sigma = \co$:
    \begin{equation}
      \label{eq:si}
      \ind {L_i} {\overline{U}_{i-1}} {(\overline{L}_{i-1}, \overline{A}_{i-1};\sigma = \co)}
      \quad (i = 1, \ldots, N+1).
    \end{equation}
\item \label{it:pos} Extended positivity, as in \defref{extpositive},
      holds.
  \end{enumerate}
  Then we shall have simple stability:
  \begin{equation}
    \label{eq:stab}
    \ind {L_i} \sigma  {(\overline{L}_{i-1}, \overline{A}_{i-1})}\quad (i = 1, \ldots, N+1).
  \end{equation}
  Moreover, sequential irrelevance holds under any regime:
  \begin{equation}
    \label{eq:si2}
    \ind {L_i} {\overline{U}_{i-1}} {(\overline{L}_{i-1}, \overline{A}_{i-1};\sigma)}\quad (i = 1, \ldots, N+1).
  \end{equation}
  
\end{theorem}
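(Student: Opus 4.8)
The plan is to run the whole argument at the level of conditional densities, rather than through the conditional-independence calculus used for \thmref{parner1}. The reason is that hypothesis \eqref{si} is asserted only for the single regime $\sigma=\co$, and the step that promotes it to an arbitrary $\ce\in\cs^*$ is exactly where extended positivity (\defref{extpositive}) must be used; working with densities makes this bookkeeping transparent. Fix $i\in\{1,\dots,N+1\}$ (with $L_{N+1}\equiv Y$), and abbreviate a typical conditioning configuration as $(\overline u_{i-1},\overline l_{i-1},\overline a_{i-1})$.

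First I would extract a regime-free conditional-density specification $q$ from extended stability \eqref{es}: summing out $u_i$ from the common conditional density of $(U_i,L_i)$ given $(\overline U_{i-1},\overline L_{i-1},\overline A_{i-1})$ yields a function $q$ with the property that
\[
  p(l_i\mid\overline u_{i-1},\overline l_{i-1},\overline a_{i-1};s)=q(l_i\mid\overline u_{i-1},\overline l_{i-1},\overline a_{i-1})
\]
for every $s\in\cs$ whenever the conditioning event has positive probability under $s$. Reading hypothesis \eqref{si} in density form, when $s=\co$ the left-hand side does not depend on $\overline u_{i-1}$ (over the $P_\co$-positive configurations); comparing with the display, $q(l_i\mid\overline u_{i-1},\overline l_{i-1},\overline a_{i-1})=\tilde q(l_i\mid\overline l_{i-1},\overline a_{i-1})$ for every such $\overline u_{i-1}$, where $\tilde q(l_i\mid\overline l_{i-1},\overline a_{i-1}):=p(l_i\mid\overline l_{i-1},\overline a_{i-1};\co)$. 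Now \defref{extpositive} upgrades the range of validity: any configuration of positive probability under some $\ce\in\cs^*$ is an event in the variables $(\overline L_N,\overline U_N,\overline A_N,Y)$ and so has positive probability under $\co$ as well; hence $q(l_i\mid\overline u_{i-1},\overline l_{i-1},\overline a_{i-1})=\tilde q(l_i\mid\overline l_{i-1},\overline a_{i-1})$ holds for every $\overline u_{i-1}$ relevant under any $\ce$.

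Finally I would sum out $\overline U_{i-1}$. For any $s\in\cs$ and any $(\overline l_{i-1},\overline a_{i-1})$ of positive $P_s$-probability,
\[
  p(l_i\mid\overline l_{i-1},\overline a_{i-1};s)=\sum_{\overline u_{i-1}}p(\overline u_{i-1}\mid\overline l_{i-1},\overline a_{i-1};s)\,q(l_i\mid\overline u_{i-1},\overline l_{i-1},\overline a_{i-1}),
\]
the sum being over the $\overline u_{i-1}$ of positive $P_s$-probability; by the previous paragraph every kernel value in the sum equals $\tilde q(l_i\mid\overline l_{i-1},\overline a_{i-1})$ (directly when $s=\co$, by way of \defref{extpositive} when $s=\ce$), so the right-hand side collapses to $\tilde q(l_i\mid\overline l_{i-1},\overline a_{i-1})$, which does not involve $s$. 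That is simple stability \eqref{stab}. The identifications used, $p(l_i\mid\overline u_{i-1},\overline l_{i-1},\overline a_{i-1};s)=q(l_i\mid\overline u_{i-1},\overline l_{i-1},\overline a_{i-1})=\tilde q(l_i\mid\overline l_{i-1},\overline a_{i-1})$, valid for every $s\in\cs$ and every positive-probability configuration, amount to exactly the statement that under each regime $L_i$ is conditionally independent of $\overline U_{i-1}$ given $(\overline L_{i-1},\overline A_{i-1})$ --- that is \eqref{si2}.

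The only delicate point --- and the one place the argument could fail --- is the matching of the `positive probability under $s$' provisos: \emph{without} extended positivity one knows the kernel $q(l_i\mid\overline u_{i-1},\cdot)$ to be free of $\overline u_{i-1}$ only on the support of $\co$, which need not contain the $\overline u_{i-1}$-values that can occur under an interventional $\ce$, and then the collapse in the last display fails. The rest is routine density manipulation, together with the paper's convention that conditional densities on null events may be assigned arbitrarily. Note that, in contrast with \thmref{parner1}, no induction on $i$ is required here: each index is treated in isolation.
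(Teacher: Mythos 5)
Your proposal is correct and is essentially the paper's own argument, transposed from conditional expectations of bounded functions $k(L_i)$ into the language of discrete conditional densities: the same three ingredients are combined in the same way (sequential irrelevance under $\co$ gives a $\overline u_{i-1}$-free kernel on the support of $P_\co$, extended positivity transfers that identity to the support of every $s\in\cs$, and extended stability identifies the kernels across regimes), and your chain of kernel identities is precisely the joint independence \eqref{stab0} from which the paper reads off both \eqref{stab} and \eqref{si2}. You also correctly isolate the role of \defref{extpositive} as the non-axiomatic step and correctly observe that, unlike \thmref{parner1}, no induction is needed.
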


\begin{proof}
  Let $k(L_i)$ be a bounded real function of $L_i$, and, for each
  regime $s \in \cs$, let $h(\overline{U}_{i-1}, \overline{L}_{i-1},
  \overline{A}_{i-1}; s)$ be a version of $\E \{k(L_i) \mid
  \overline{U}_{i-1}, \overline{L}_{i-1}, \overline{A}_{i-1}; s\}$.
  
  By \eqref{si} there exists $f(\overline{L}_{i-1},
  \overline{A}_{i-1})$ such that
  \begin{equation}
    \label{eq:aa1}
    h(\overline{U}_{i-1}, \overline{L}_{i-1},
    \overline{A}_{i-1};\co) = f(\overline{L}_{i-1},
    \overline{A}_{i-1})\quad\as{P_\co}
  \end{equation}
  whence, from \eqref{pos}, for all $s \in \cs$,
  \begin{equation}
    \label{eq:aa2}
    h(\overline{U}_{i-1}, \overline{L}_{i-1},
    \overline{A}_{i-1};\co) = f(\overline{L}_{i-1},
    \overline{A}_{i-1})\quad\as{P_s}.
  \end{equation}
  
  Also, from \eqref{es},
  \begin{equation}
    \label{eq:es0}
    \ind {L_i} \sigma {\overline{U}_{i-1}, \overline{L}_{i-1}, \overline{A}_{i-1}}
  \end{equation}
  and so there exists $g(\overline{U}_{i-1}, \overline{L}_{i-1},
  \overline{A}_{i-1})$ such that, for all $s \in \cs$,
  \begin{equation}
    \label{eq:aa3}
    h(\overline{U}_{i-1}, \overline{L}_{i-1},
    \overline{A}_{i-1}; s) = g(\overline{U}_{i-1},
    \overline{L}_{i-1}, \overline{A}_{i-1})
    \quad\as{P_s}.
  \end{equation}
  In particular,
  \begin{equation}
    \label{eq:aa3a}
    h(\overline{U}_{i-1}, \overline{L}_{i-1},
    \overline{A}_{i-1};\co) = g(\overline{U}_{i-1},
    \overline{L}_{i-1}, \overline{A}_{i-1})
    \quad\as{P_\co},
  \end{equation}
  so that, again using \eqref{pos},
  \begin{equation}
    \label{eq:aa4}
    h(\overline{U}_{i-1}, \overline{L}_{i-1},
    \overline{A}_{i-1};\co) = g(\overline{U}_{i-1},
    \overline{L}_{i-1}, \overline{A}_{i-1})
    \quad\as{P_s}.
  \end{equation}
  Combining \eqref{aa2}, \eqref{aa3} and \eqref{aa4}, we obtain
  \begin{equation}
    \label{eq:aa5}
    h(\overline{U}_{i-1}, \overline{L}_{i-1},
    \overline{A}_{i-1}; s) =  f(\overline{L}_{i-1},
    \overline{A}_{i-1}) \quad\as{P_s}.
  \end{equation}
  Since this property holds for all $s \in \cs$ and every bounded real
  function $k(L_i)$, we deduce
  \begin{equation}
    \label{eq:stab0}
    \ind {L_i} {(\overline{U}_{i-1}, \sigma)}  {(\overline{L}_{i-1}, \overline{A}_{i-1})}
  \end{equation}
  from which both \eqref{stab} and \eqref{si2} follow.
\end{proof}

It is worth noting that we do not need the full force of extended
stability for the above proof, but only \eqref{es0}.  In particular,
we could allow arbitrary dependence of $U_i$ on any earlier variables,
including $\sigma$.  We note further that the above proof makes
essential use of the extended positivity property of
\defref{extpositive}: \eqref{stab} can not be deduced from extended
stability and \condref{pa1a} making use of the standard conditional
independence axioms \cite{apd:enc/ci,pearl/paz,apd:separoids} alone.

Although we can certainly deduce simple stability when we can assume
the conditions of either \thmref{parner1} or \thmref{seqirr}, it can
also arise our of extended stability in other ways.  For example, this
can be so when \condref{pa1} holds for some subsets of $\overline
U_i$, while \condref{pa1a} holds for some subsets of $\overline
U_{i-1}$.  Such cases are addressed by Corollaries~4.1 and 4.2 of
\textcite{jmr:lns}; we give examples in \secref{furtherex} below.

\section{Influence diagrams}
\label{sec:ID}

As previously mentioned, it is often helpful (though never essential)
to represent and manipulate conditional independence properties
graphically, using the formalism of {\em influence diagrams\/} (IDs).
In particular, when including unobserved variables $\cal U$ and
assuming extended stability, we can often deduce directly from
graph-theoretic separation properties whether simple stability holds.

\subsection{Semantics}
\label{sec:semantics}
Here we very briefly describe the semantics of IDs, and show how they
can facilitate logical arguments by displaying implied properties in a
particularly transparent form.  We shall use the theory and notation
of \textcite{mybook} and \textcite{apd:infdiags} in relation to
directed acyclic graphs (DAGs) and IDs, and their application to
probability and decision models.  The reader is referred to these
sources for more details.

For any DAG or ID $\dag$, its {\em moral graph\/}, or {\em
  moralization\/}, is the undirected graph $\mo(\dag)$ in which first
an edge is inserted between any unlinked parents of a common child in
$\dag$, and then all directions are ignored.  For any set $S$ of nodes
of $\dag$ we denote the smallest ancestral subgraph of $\dag$
containing $S$ by $\an_\dag(S)$, and its moralization by $\ma_\dag(S)$
(we may omit the specification of $\dag$ when this is clear).  For
sets $A, B, C$ of nodes of $\dag$ we write $\inda A B C \dag$, and say
{\em $C$ separates $A$ from $B$ (with respect to $\dag$)\/} to mean
that, in $\ma(A \cup B \cup C)$, every path joining $A$ to $B$
intersects $C$. Let $\nd(V)$ and $\parents{V}$ denote the
non-descendants and parents of a random node $V$, then it can be shown
\cite{ldll,apd:infdiags} that, whenever a probability distribution or
decision problem is {\em represented by $\dag$\/}, in the sense that
for any such $V$ the probabilistic conditional independence $\ind V
{\nd(V)} {\parents{V}}$ holds, we have
\begin{equation}
  \label{eq:moralcrit}
  \inda A B C \dag \Rightarrow \ind A B C.
\end{equation}
This {\em moralization criterion\/} thus allows us to infer
probabilistic independence properties from purely graph-theoretic
separation properties.%
\footnote{%
  An alternative, and entirely equivalent, approach can be based on
  the `$d$-separation criterion' \cite{verma,pearl:book}.  We have
  found \eqref{moralcrit} more straightforward to understand and
  apply.%
}

While the above allows us to read off conditional independencies from
a DAG, we can, conversely, construct an ID $\dag$ from a given
collection of joint distributions over the domain variables (one for
each regime) in the following way.

The node-set is given by ${\cal V} = \{\sigma\} \cup {\cal L} \cup
{\cal U} \cup {\cal A}$.  The graph has random (round) nodes for all
the domain variables, and a founder decision (square) node for
$\sigma$.  The ordering given by the extended information base ${\cal
  I}'$ induces an ordering on ${\cal V}$ such that any nodes in the
(possibly empty) sets $L_i$, $U_i$ come after $A_{i-1}$ and before
$A_{i}$, and $L_{N+1}\equiv Y$ is last.  In addition we require the
node $\sigma$ to be prior to any domain variables in this ordering.
With each node $\nu\in {\cal V}_0 := {\cal V}\setminus\{\sigma\}$ is
associated its collection of conditional distributions, given values
for all its {\em predecessors\/}, $\pre{\nu}$, in the ordering
(including, in particular, specification of the relevant regime).

For each such $\nu$ we will have a conditional independence (CI)
property of the form:

\begin{displaymath}
  C(\nu) : \ind \nu {\pre{\nu}} {\parents{\nu}}
\end{displaymath}
where $\parents{\nu}$ is some given subset of $\pre{\nu}$.  Thus
$C(\nu)$ asserts that the distributions of $\nu$, given all its
predecessors, in fact only depends on the values of those in
$\parents{\nu}$.  Note that property $C(\nu)$ will be vacuous, and can
be omitted, when $\parents{\nu} = \pre{\nu}$.  Such a collection,
${\cal C}$ say, of CI properties is termed {\em recursive\/}.  We
represent ${\cal C}$ graphically by drawing an arrow into each node
$\nu\in{\cal V}_0$ from each member of its parent set $\parents{\nu}$,
and we associate with $\nu$ the `parent-child' conditional
probabilities of the form $p(\nu = \nu^* \mid\parents{\nu} = pa^*)$.
The ID constructed in this way will ensure that the joint distribution
of the domain variables, in each regime, satisfies any conditional
independencies obtained by applying the moralization criterion
\eqref{moralcrit}.

From this point on, when we use the terms `parents', `ancestors' \etc,
the regime node $\sigma$ will be excluded from these sets.  Also,
while in general the terms $L_i$, $U_i$ could each refer to a
collection of variables, for simplicity we shall consider only the
case in which they represent just one (or sometimes none), and so can
be modelled (if present at all) by a single node in the graph.

We emphasise that IDs are related to but distinct from `causal DAGs'
\cite{sgs:book,pearl:bka}.  For a discussion see \textcite{apd:beware}
and \textcite{didetal:10}.

\subsection{Extended stability}
\label{sec:extstabdag}

The extended stability property \eqref{es} embodies a recursive
collection of CI properties with respect to the ordering induced by
the extended information base.  Consequently it can be faithfully
expressed by an ID $\dag$ satisfying:

\begin{cond}
  \label{cond:sigout}
  The only arrows out of $\sigma$ in $\dag$ are into ${\cal A}$.
\end{cond}

\begin{figure}[h]
  \begin{center}
    \resizebox{2,2in}{!}{\includegraphics{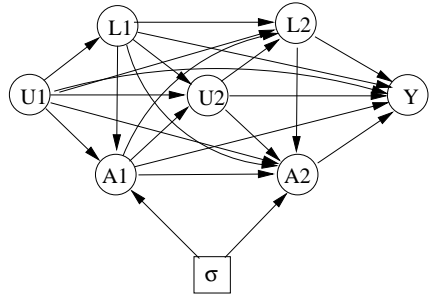}}
    \caption{Unobserved variables: $N=2$}
    \label{fig:parner}
  \end{center}
\end{figure}

\noindent For $N=2$ this is depicted in \figref{parner}.  Note that
the subgraph corresponding to the domain variables is complete.

\subsubsection{Sequential randomization }
\label{sec:seqrandgraph}

With the ordering induced by the extended information base ${\cal
  I}'$, \eqref{es} and \eqref{pa1} together form a recursive
collection ${\cal C}$ of CI properties.  Therefore the conditions of
\thmref{parner1} can be faithfully represented graphically in an ID
$\dag$, in which, for extended stability, the only arrows out of
$\sigma$ are into the $A$'s, while also, for sequential randomization,
there are no arrows into the $A$'s from the $U$'s.  Thus starting from
\figref{parner}, for example, we simply delete all the arrows from a
$U$ to an $A$, so obtaining \figref{parner1}.

\begin{figure}[h]
  \begin{center}
    \resizebox{2.2in}{!}{\includegraphics{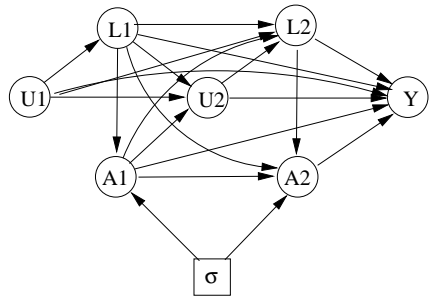}}
    \caption{ID showing sequential randomization.}
    \label{fig:parner1}
  \end{center}
\end{figure}

We can now verify \thmref{parner1} using only graphical manipulations,
as follows.

Since, under \eqref{es}, the only children of $\sigma$ are action
variables, and under \eqref{pa1} no action variable can be a child of
any unobservable variable, it follows that in $\ma(\sigma, \overline
L_{i}, \overline A_{i-1})$ there will be no direct link between
$\sigma$ and any $U\in{\cal U}$.  A similar argument shows that
\eqref{es} implies that there is no direct link in $\ma(\sigma,
\overline L_{i}, \overline A_{i-1})$ between $\sigma$ and $L_i$.  It
follows that every path from $L_i$ to $\sigma$ must pass through one
of the remaining variables, \ie\ $(\overline L_{i-1}, \overline
A_{i-1})$, demonstrating that $ \inda {L_i} {\sigma} {(\overline
  L_{i-1}, \overline A_{i-1})} \dag$ for $i = 1, \ldots, N+1$.  Simple
stability \eqref{expci2} now follows from \eqref{moralcrit}.

\subsubsection{Sequential irrelevance }
\label{sec:seqirrgraph}

The case of sequential irrelevance is more subtle.  This is because
when we combine extended stability \eqref{es} with sequential
irrelevance \eqref{pa1a} we do not obtain a recursive collection of CI
properties.  Consequently this combined collection of conditional
independencies cannot be faithfully represented by any ID.

It might be thought that, starting with an ID representing extended
stability, we could operate on it to incorporate \eqref{pa1a} also
simply by deleting all arrows from $U_i$ into $L_j$ for $j>i$.  Doing
this to \figref{parner} yields the ID of \figref{parner1a}.  However,
that ID also represents the stronger property \eqref{stab0} (shown by
the absence of edges from $\sigma$ and $\overline U_{i-1}$ into
$L_i$), which does not follow from \eqref{es} and \eqref{pa1a} without
imposing further, non-graphical conditions (as was done in
\thmref{seqirr}).  We can indeed read off the stability property
\eqref{expci2} from \figref{parner1a}, but while that graph thus
displays clearly the conclusion of \thmref{seqirr}, it does not supply
an alternative graphical proof.

\begin{figure}[h]
  \begin{center}
    \resizebox{2.2in}{!}{\includegraphics{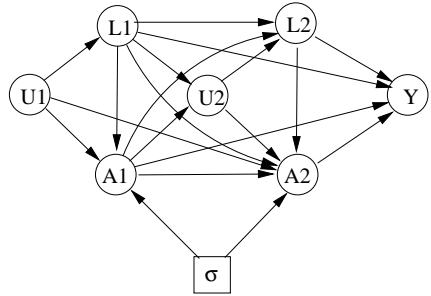}}
    \caption{ID implying sequential irrelevance.}
    \label{fig:parner1a}
  \end{center}
\end{figure}

By omitting some of the nodes and/or arrows in an ID, such as
\figref{parner1} or \figref{parner1a}, that already embodies either
sequential randomization or sequential irrelevance, we obtain simpler
special cases with the same property.  Two such examples, starting
from \figref{parner1a}, are given in \figref{vanessa}.

\begin{figure}[ht]
  \begin{centering}
    \epsfxsize=1.7in \epsfbox{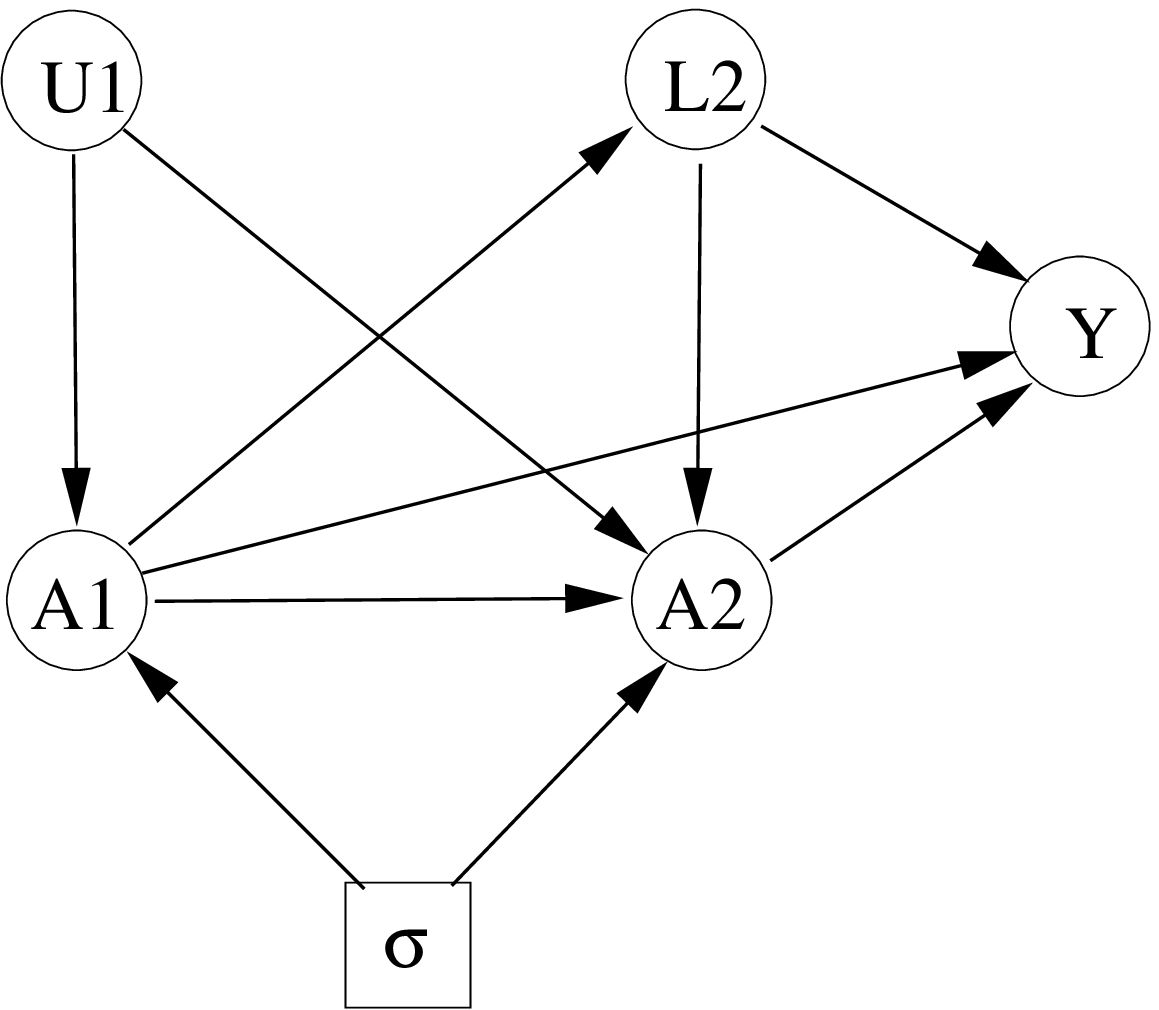} \quad\quad\quad
    \epsfxsize=1.7in \epsfbox{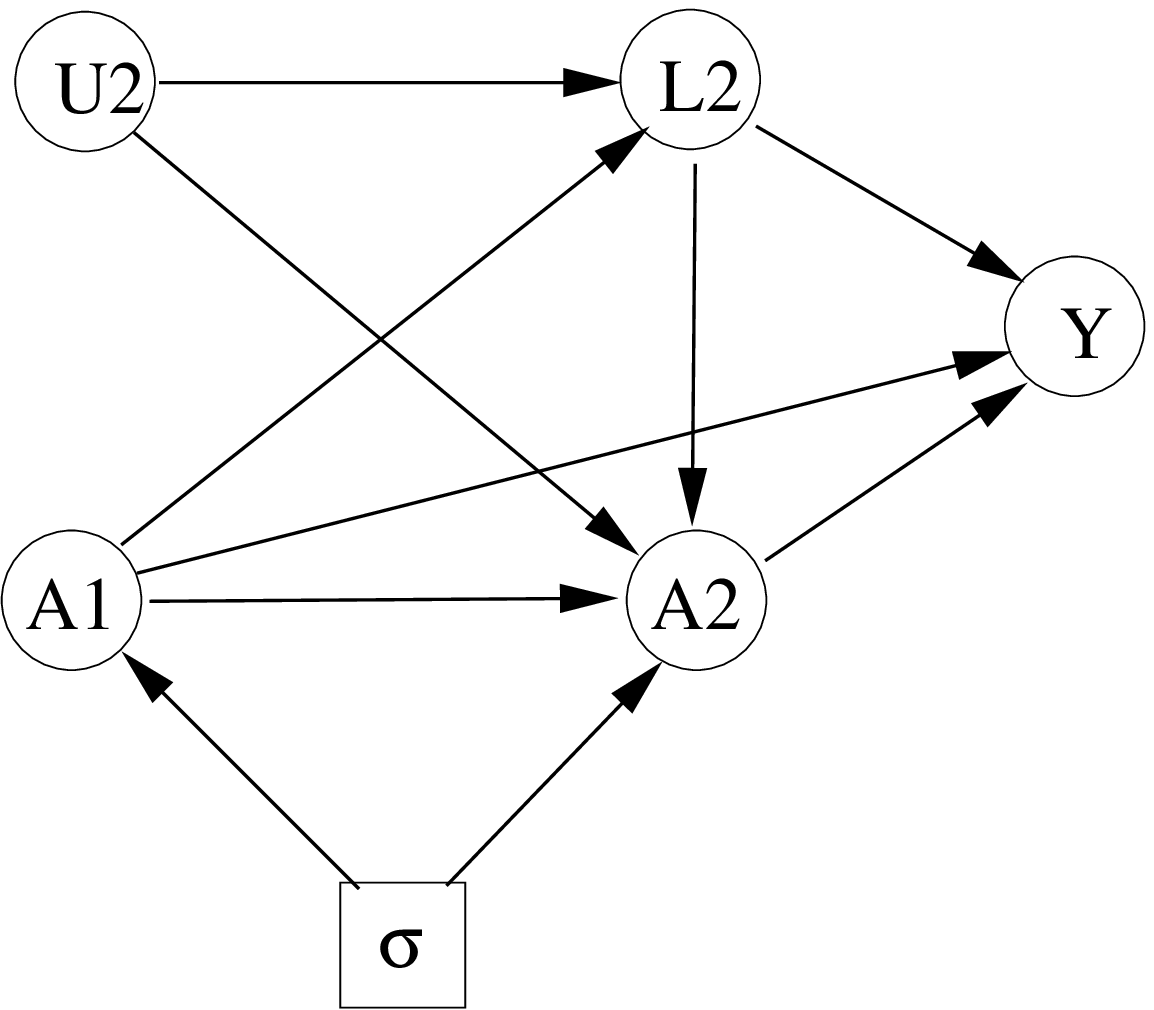}
    \caption{Specialisations of \figref{parner1a}}
    \label{fig:vanessa}
  \end{centering}
\end{figure}

\subsubsection{Further examples}
\label{sec:furtherex}

As mentioned before, we can have simple stability even when both
sequential randomization and sequential irrelevance (or more
precisely, the conditions of Theorems~\ref{thm:parner1} and
\ref{thm:seqirr}) fail.  Two examples are given by the IDs of
\figref{pax}.  Applying the moralisation criterion to the graphs, we
verify, for example, that in both IDs of \figref{pax} simple stability
is satisfied.

\begin{figure}[ht]
  \begin{centering}
    \epsfxsize=2.1in \epsfbox{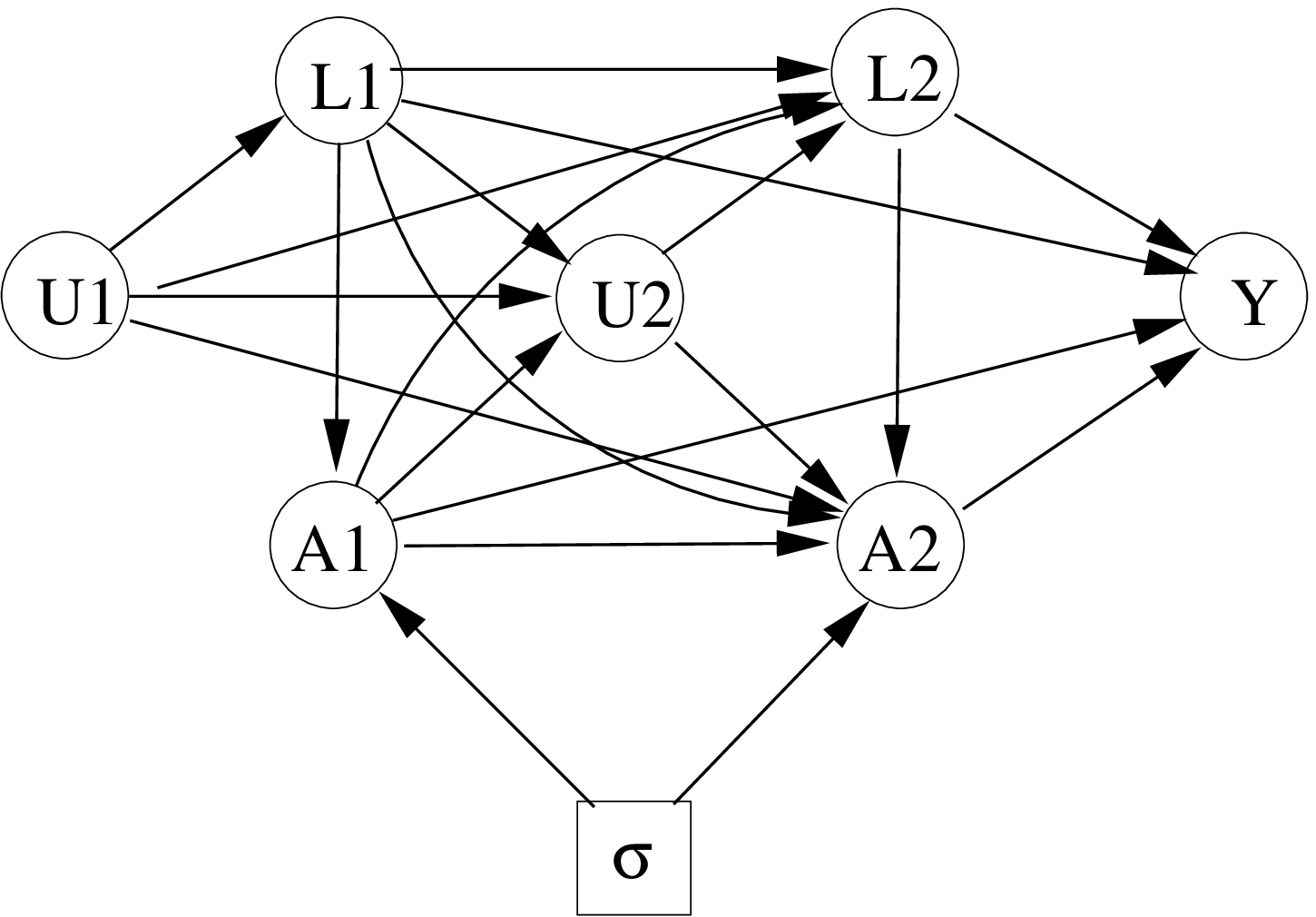}
    \quad\quad\quad
    \epsfxsize=2.1in \epsfbox{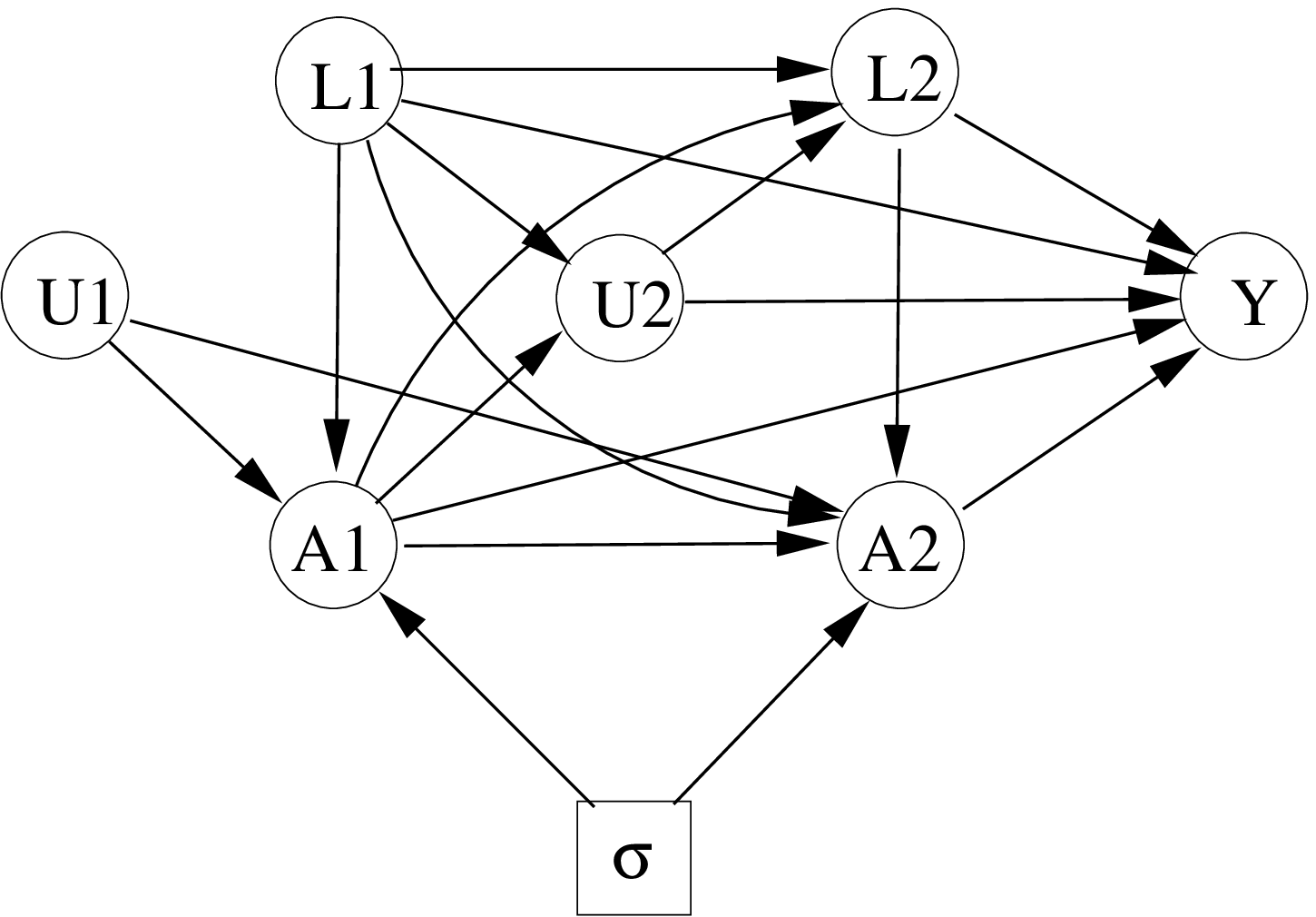}
    \caption{Alternative IDs displaying stability}
    \label{fig:pax}
  \end{centering}
\end{figure}
 
In full generality it is easy to see, using \condref{sigout}, that
application of the moralization criterion to $\dag$ to check the
simple stability condition \eqref{expci2} is equivalent to checking
that, for each $i$, $\overline{L}_{i-1}$ satisfies Pearl's {\em
  back-door criterion\/} \cite{pearl:bka} relative to
$(\overline{A}_{i-1}, L_i)$.  (Pearl only considers atomic
interventions, but our analysis shows that this condition also allows
identification of conditional interventions.)

\subsubsection{Positivity}
\label{sec:grecgen}
Suppose that (whether by appealing to sequential randomization, or to
sequential irrelevance, or the back-door criterion, or otherwise) we
have been able to demonstrate simple stability with respect to an
observable information base.  Suppose further that $\ce$ is a control
strategy in the sense of \condref{cont}.  It will now follow that we
can use $G$-recursion, exactly as in \secref{grec0}, to identify the
consequence of regime $\ce$ from data gathered under regime $\co$ ---
so long only as we can also ensure the positivity constraint of
\defref{positive}.

It is easy to see that a sufficient condition for \defref{positive} to
hold is:

\begin{cond}[Parent-child positivity]
  \label{cond:pos}
  For each $A\in{\cal A}$, and each configuration $(a, pa^*)$ of $(A,
  \pa{\dag}{A})$, $p(a \mid pa^*; \ce) >0 \Rightarrow p(a \mid pa^*;
  \co) >0$.
\end{cond}

More generally, suppose that we specify, for each entry in each
parent-child conditional probability table for the ID $\dag$, whether
it is zero or non-zero.  We can then apply {\em constraint
  propagation\/} algorithms \cite{dechter:03} to determine ${\cal E}$
and ${\cal O}$.  One such method \cite{dawid:92} uses an analogue of
the computational method of probability propagation \cite{mybook}.
This generates a collection of `cliques' (subsets of the variables)
with, for each clique, an assignment of 1 (meaning possible) or 0
(impossible) to each configuration of its variables.
\defref{positive} will then hold if and only if, for each clique
containing $\sigma$, no entry changes from 0 to 1 when we change the
value of $\sigma$ from $\co$ to $\ce$.

\section{A more general approach}
\label{sec:moregen}

The simple stability condition \eqref{expci2} requires that, for each
$i$, the conditional distribution of $L_i$, given the earlier
variables $(\overline L_{i-1}, \overline A_{i-1})$, should be the same
under both regimes $\co$ and $\ce$ --- a strong assumption that, in
certain problems, one might be unwilling to accept directly, and
unable to deduce, as in \secref{regained}, from more acceptable
assumptions.  However, while we have shown that stability (together
with \defref{positive}) is sufficient to support $G$-recursion, it
turns out not to be necessary.

In this section we first give some very general conditions under which
$G$-recursion can be justified; then we consider their specific
application to models incorporating extended stability.  Our analysis
parallels parts of \textcite{jr:cma} (see also Section 3.4 of
\textcite{jmr:lns}), in which the `sequential randomization'
assumption is relaxed.  We consider the relation between the two
approaches in more detail in \secref{judith}.

Rather than work directly with \eqref{standrecura} and
\eqref{standrecurl}, we combine them into the following form:
\begin{equation}
  \label{eq:standrecur}
  f(\overline l_{i-1}, \overline a_{i-1}) = \sum_{l_i}\sum_{a_i}\,
  p(l_i \mid \overline l_{i-1}, \overline a_{i-1}; \co) \times p(a_i
  \mid \overline l_{i}, \overline a_{i-1}\,;\, \ce) \times f(
  \overline l_i, \overline a_i).
\end{equation}
To justify $G$-recursion it is enough to demonstrate the applicability
of \eqref{standrecur}.

\subsection{$G$-recursion: General conditions}
\label{sec:grec}
A primitive building block of our model is the specification of the
interventional conditional probabilities $p(a_i \mid \overline l_i,
\overline a_{i-1}\,;\,\ce)$.  We suppose that this is well-defined
(\eg\ by deterministic functions or specified randomization) at least
for all $(\overline l_i, \overline a_{i-1}) \in{\cal O}$ ($1 \leq i
\leq N$), even if $(\overline l_i, \overline a_{i-1}) \not\in{\cal
  E}$.

We introduce a function $\gamma: {\cal H} \rightarrow \{0, 1\}$
defined by:
\begin{equation}
  \label{eq:gamma}
  \gamma(h) := 
  \left\{
    \begin{array}[c]{ll}
      1 & \mbox{if } h \in{\cal O} \mbox{ and }
      \prod_{j=1}^{i} p(a_j \mid  \overline l_{j}, \overline a_{j-1}\,;\,
      \ce) > 0\\
      0 & 
      \mbox{otherwise.}
    \end{array}
  \right.
\end{equation}
In \eqref{gamma}, $i$ is the highest index of an action variable
appearing in $h$, \ie $h = (\overline l_i, \overline a_i)$ or
$(\overline l_{i+1}, \overline a_i)$.  Note that if $h$ is an initial
segment of $h'$, then $\gamma(h) = 0\Rightarrow \gamma(h') = 0$.

We define:
\begin{equation}
  \label{eq:Gamma}
  \Gamma := \{h\in {\cal H} : \gamma(h) = 1\}
\end{equation}
(so that, in particular, $\Gamma \subseteq {\cal O}$).

We now impose the following positivity condition in place of
\defref{positive}:
\begin{cond}
  \label{cond:ac2}
  For $1 \leq i \leq N$, if $(\overline l_{i}, \overline a_{i-1})$ is
  in ${\Gamma}$ and $p(a_i \mid \overline l_{i}, \overline
  a_{i-1}\,;\,\ce) > 0$, then $(\overline l_i, \overline a_i)$ is in
  ${\cal O}$ (and thus in ${\Gamma}$).
\end{cond}
This requires that, subsequent to any partial history $(\overline l_i,
\overline a_{i-1})$ in ${\Gamma}$, if some value of the next action
variable can be generated by intervention, it can also arise
observationally.

Our approach now involves the construction, if possible, of a sequence
of joint distributions $p_i(\,\cdot\,)$ ($i = 0, \ldots, N$) for all
the variables in the problem, such that
\begin{equation}
  \label{eq:0=e}
  p_0(y) \equiv p(y \,;\, \ce),
\end{equation}
and certain further properties hold, as described below.  For maximum
applicability these are stated here in a very abstract and general
form.  Some concrete cases where we can specify suitable $(p_i)$ and
verify that they have the requisite properties are treated in
\secref{extstab} and \secref{judith} below.

Let the class of partial histories $h\in{\cal H}$ having positive
probability under $p_i$ be denoted by ${\cal B}_i$, and let
${\Gamma}_i := {\cal B}_i \cap {\Gamma}$.

We require the following positivity property:
\begin{equation}
  \label{eq:acnew}
  (\overline l_{i}, \overline a_i) \in{\cal B}_i \Leftrightarrow 
  (\overline l_{i}, \overline a_i) \in{\cal O}.
\end{equation}

Since $\Gamma\subseteq {\cal O}$, from `$\Leftarrow$' in \eqref{acnew}
we readily deduce
\begin{equation}
  \label{eq:acnew2}
  (\overline l_{i}, \overline a_i) \in \Gamma \Leftrightarrow  (\overline
  l_{i}, \overline a_i) \in \Gamma_i.
\end{equation}

More substantively we require:
\begin{eqnarray}
  \label{eq:N=o}
  \label{eq:l}
  p_{i-1}(l_i \mid \overline l_{i-1}, \overline a_{i-1}) &=& p(l_i \mid \overline l_{i-1},
  \overline a_{i-1}\,;\, \co)\quad(i=1, \ldots, N+1)\\
  \label{eq:a}
  p_{i-1}(a_i \mid \overline l_i, \overline a_{i-1}) &=& p(a_i \mid \overline l_i, \overline a_{i-1}\,;\,
  \ce)\quad(i=1,\ldots, N+1)\\
  \label{eq:y}
  p_{i-1}(y \mid \overline l_i, \overline a_i) &=& p_{i}(y \mid
  \overline l_i, \overline a_i)\quad(i=1, \ldots, N)
\end{eqnarray}
{\em whenever, in each case, the conditioning partial history on the
  left-hand side is in ${\Gamma}_{i-1}$\/} (in which case the
conditional probabilities on both sides are unambiguously defined).

Suppose now that such a collection of distributions $(p_i)$ can be
found.  Let ${\cal H}_0$ denote the set of all partial histories of
the form $(\overline l_i, \overline a_i)$ for some $i$.  We define a
function $f: {\cal H}_0 \rightarrow \Re$ by:
\begin{equation}
  \label{eq:fi}
  f(h) :=  \gamma(h) \times \E_{i}\{k(Y) \mid h\},
\end{equation}
for $h = (\overline l_i, \overline a_i)$, where $\E_i$ denotes
expectation under $p_i$.  We note that $f$ is well-defined, since
$\gamma(h) \neq 0 \Rightarrow h\in{\cal O}$, whence $h\in{\cal B}_i$
by \eqref{acnew}.

For $h = (\overline l_N, \overline a_N)$, if $\gamma(h) \neq 0$ then
by \eqref{acnew2} $h \in\Gamma_N$, so that we can apply \eqref{l} for
$i = N+1$ to see that:
\begin{equation}
  \label{eq:fngen}
  f(\overline l_N, \overline a_N) = 
  \left\{
    \begin{array}[c]{ll}
      \E\{k(Y) \mid \overline l_N, \overline a_N\,;\,\co\} &\mbox{if }
      (\overline l_N, \overline a_N)\in\Gamma\\
      0 & \mbox{otherwise.}
    \end{array}
  \right.
\end{equation}
Also, by \eqref{0=e},
\begin{equation}
  \label{eq:fempty}
  f(\emptyset) = p(y\,;\, \ce).  
\end{equation}

\begin{lem}
  \label{lem:induction}
  Under \condref{ac2} and properties~\eqref{0=e}--\eqref{y}, the
  $G$-recursion \eqref{standrecur} holds for the interpretation
  \eqref{fi}.
\end{lem}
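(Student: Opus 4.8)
The plan is to verify that the function $f$ defined in \eqref{fi} satisfies the recursion \eqref{standrecur}, working downward from $i = N$ to $i = 1$. The key observation is that $f(h)$ is built from expectations under the distributions $p_i$, and properties \eqref{l}--\eqref{y} are precisely what is needed to re-express a conditional expectation under $p_i$ in terms of conditional expectations under $p_{i-1}$, which in turn involve only the observational density for $L_i$ and the interventional density for $A_i$.

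\medskip

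\noindent\textbf{Step 1: Reduce to partial histories in $\Gamma_{i-1}$.} First I would note that both sides of \eqref{standrecur} vanish unless $(\overline l_{i-1}, \overline a_{i-1}) \in \Gamma$. On the left this is immediate from the factor $\gamma(h)$ in \eqref{fi}. On the right, if $\gamma(\overline l_{i-1}, \overline a_{i-1}) = 0$ then $\gamma(\overline l_i, \overline a_i) = 0$ for every extension (using the monotonicity remark after \eqref{gamma}), so every summand is $0$. Thus it suffices to check \eqref{standrecur} when $(\overline l_{i-1}, \overline a_{i-1}) \in \Gamma$; by \eqref{acnew2} this is the same as $(\overline l_{i-1}, \overline a_{i-1}) \in \Gamma_{i-1}$, which is exactly the regime under which \eqref{l}--\eqref{y} are asserted to hold.

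\medskip

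\noindent\textbf{Step 2: Expand the conditional expectation under $p_{i-1}$.} Fix $(\overline l_{i-1}, \overline a_{i-1}) \in \Gamma_{i-1}$. By the tower property applied to $p_{i-1}$,
\begin{equation*}
  \E_{i-1}\{k(Y) \mid \overline l_{i-1}, \overline a_{i-1}\}
  = \sum_{l_i}\sum_{a_i} p_{i-1}(l_i \mid \overline l_{i-1}, \overline a_{i-1}) \, p_{i-1}(a_i \mid \overline l_i, \overline a_{i-1}) \, \E_{i-1}\{k(Y) \mid \overline l_i, \overline a_i\},
\end{equation*}
where the inner sums range only over those $(l_i, a_i)$ for which $(\overline l_i, \overline a_i) \in {\cal B}_{i-1}$. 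Now substitute \eqref{l} and \eqref{a} for the two transition factors; these apply because the conditioning history $(\overline l_{i-1}, \overline a_{i-1})$, resp.\ $(\overline l_i, \overline a_{i-1})$, lies in $\Gamma_{i-1}$ (for the latter, since $a_i$ ranges over values with $p_{i-1}(a_i \mid \cdot) > 0$, which by \eqref{a} means $p(a_i \mid \cdot; \ce) > 0$, so $\gamma$ is still $1$ and, by \condref{ac2}, $(\overline l_i, \overline a_i) \in {\cal O}$, hence in $\Gamma$, hence in $\Gamma_{i-1}$ after applying \eqref{acnew2}). This turns the factors into $p(l_i \mid \overline l_{i-1}, \overline a_{i-1}; \co)$ and $p(a_i \mid \overline l_i, \overline a_{i-1}; \ce)$, matching \eqref{standrecur}.

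\medskip

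\noindent\textbf{Step 3: Replace $\E_{i-1}$ by $\E_i$ in the inner term, and match the definition of $f$.} The remaining discrepancy is that the inner expectation is $\E_{i-1}\{k(Y) \mid \overline l_i, \overline a_i\}$, whereas $f(\overline l_i, \overline a_i) = \gamma(\overline l_i, \overline a_i)\,\E_i\{k(Y) \mid \overline l_i, \overline a_i\}$. Here I would invoke \eqref{y}: for $(\overline l_i, \overline a_i) \in \Gamma_{i-1}$, the conditional law of $Y$ given $(\overline l_i, \overline a_i)$ is the same under $p_{i-1}$ and $p_i$ (more carefully, \eqref{y} equates the conditional density of $Y$, whence the conditional expectation of any function, in particular $k(Y)$, agrees). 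For the summands with $(\overline l_i, \overline a_i) \notin \Gamma$, we have $\gamma(\overline l_i, \overline a_i) = 0$, so those terms must be shown to contribute $0$ on the left-hand side of \eqref{standrecur} as well --- but by the argument in Step 2 any such $(l_i,a_i)$ also has $p(a_i \mid \overline l_i, \overline a_{i-1}; \ce) = 0$ once $(\overline l_{i-1},\overline a_{i-1}) \in \Gamma$ (otherwise \condref{ac2} would force membership in $\Gamma$), so the interventional-probability factor kills them. Hence the surviving summands satisfy $\E_{i-1}\{k(Y) \mid \overline l_i, \overline a_i\} = \E_i\{k(Y) \mid \overline l_i, \overline a_i\} = f(\overline l_i, \overline a_i)$, and \eqref{standrecur} follows. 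Combined with \eqref{fngen} as the base case at $i = N$ and \eqref{fempty} at the end, this shows that the $f$ of \eqref{fi} is exactly the output of $G$-recursion.

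\medskip

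\noindent I expect the main obstacle to be the bookkeeping around the supports: one must be careful that every invocation of \eqref{l}, \eqref{a}, \eqref{y} is legitimate (conditioning history in $\Gamma_{i-1}$) and that summands outside $\Gamma$ are consistently annihilated by the $\gamma$-factor on one side and the interventional probability on the other --- this is where \condref{ac2}, the monotonicity of $\gamma$, and the equivalence \eqref{acnew2} all have to be used in concert. The measure-theoretic/"a.s." qualifications in \eqref{l}--\eqref{y} are harmless in the discrete setting, since positivity of the conditioning event makes all the conditional densities genuinely pointwise-defined.
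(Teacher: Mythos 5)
Your proposal is correct and follows essentially the same route as the paper's own proof: expand $\E_{i-1}\{k(Y)\mid \overline l_{i-1},\overline a_{i-1}\}$ over $(l_i,a_i)$, substitute the three factors in turn via \eqref{l}, \eqref{a} and \eqref{y}, and use \condref{ac2} together with \eqref{acnew2} to keep each successive conditioning history in $\Gamma_{i-1}$, with vanishing factors disposing of any summand where this fails. The only (harmless) slip is in Step 2, where your parenthetical justification for applying \eqref{a} establishes $(\overline l_i,\overline a_i)\in\Gamma_{i-1}$ rather than membership of the history actually being conditioned on, $(\overline l_i,\overline a_{i-1})$; the latter follows because a non-zero $p(l_i\mid\overline l_{i-1},\overline a_{i-1};\co)$ puts $(\overline l_i,\overline a_{i-1})$ in ${\cal O}$ while leaving the $\gamma$-product unchanged, so it stays in $\Gamma$ and hence, by \eqref{acnew2}, in $\Gamma_{i-1}$.
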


\begin{proof}
  If $\gamma(\overline l_{i-1}, \overline a_{i-1}) = 0$ then both
  sides of \eqref{standrecur} are 0.
  
  Otherwise $(\overline l_{i-1}, \overline a_{i-1})$ is in $\Gamma$
  and so, by \eqref{acnew2}, in $\Gamma_{i-1}$.  We have:
  \begin{eqnarray}
    \nonumber
    f(\overline{l}_{i-1}, \overline{a}_{i-1}) &=& \E_{i-1}\{k(Y) \mid
    \overline{l}_{i-1}, \overline{a}_{i-1}\}\\
    \nonumber
    &=& \sum_{l_i} \, \sum_{a_i} \,\,
    p_{i-1}(l_i \mid \overline{l}_{i-1}, \overline{a}_{i-1}) \times  p_{i-1}(a_i
    \mid \overline{l}_i, \overline{a}_{i-1}) \times  \E_{i-1}\{k(Y) \mid \overline{l}_i,
    \overline{a}_i\}.\\
    \label{eq:inta}\mbox{\hspace{1em}}
  \end{eqnarray}
  Denote the three terms on the right-hand side of \eqref{inta} by
  $T_l$, $T_a$, $T_y$, respectively.  By \eqref{l} $T_l= p(l_i \mid
  \overline{l}_{i-1}, \overline{a}_{i-1}\,;\,\co)$.  We do not need to
  consider the other terms when $T_l$ is 0.  Otherwise,
  $(\overline{l}_i, \overline{a}_{i-1})$ is in $\Gamma_{i-1}$.  By
  \eqref{a}, we now have $T_a = p(a_i \mid \overline{l}_i,
  \overline{a}_{i-1}\,;\,\ce)$.  Again we do not have to worry about
  $T_y$ unless $T_a $ is non-zero.  In that case $(\overline{l}_i,
  \overline{a}_i)$ is in ${\cal B}_{i-1}$ and also, by \condref{ac2},
  in $\Gamma$, hence in $\Gamma_{i-1}$.  We can now use \eqref{y} to
  replace $T_y$ by $\E_i\{k(Y) \mid \overline{l}_i, \overline{a}_i\} =
  f(\overline{l}_i, \overline{a}_i)$, and the result follows.
\end{proof}

Starting from \eqref{fngen}, we can thus apply $G$-recursion as given
by \eqref{standrecur}, or equivalently by \eqref{standrecura} and
\eqref{standrecurl}, to compute $f(\emptyset)$ --- which, by
\eqref{fempty}, is just the desired consequence of regime $\ce$.  In
this computation we only need consider partial histories in $\Gamma$.
When $\ce$ is a deterministic strategy we recover the form
\eqref{determstandrecur} of $G$-recursion.

Note that, for histories of intermediate length, the function $f$
defined by \eqref{fi} involves the constructed distributions $(p_i)$,
which need not have any real-world interpretation.  Note further that,
in contrast to the case when stability applies, even when we can use
the above construction to compute the marginal interventional
distribution of the response variable $Y$, there is no guarantee that
we can identify the full joint interventional distribution of
$(\overline L_N, \overline A_N, Y)$.  In particular, if the loss
function depends on variables other than $Y$ we may not be able to
estimate the expected loss of an interventional strategy on the basis
of observational data.

\subsection{Extended stability}
\label{sec:extstab}
We now specialize the general approach of \secref{grec} to problems
exhibiting extended stability, as in \eqref{es}.  This can be regarded
as extending the analysis of \textcite{pearl/robins:uai95} to handle
dynamic regimes, as also considered by
\textcite{jmr:lns}.\footnote{Both these papers refer for the details
  to an unpublished paper, Robins and Pearl (1996).}

We aim to identify a graphical counterpart to the conditions of
\secref{grec}, that would allow us to apply $G$-recursion to this
extended information base so as to identify the effect of regime $\ce$
from observations made under $\co$.

For the remainder of this section we consider a given information base
${\cal I}'$ that induces an ordering of the nodes of the influence
diagram $\dag$; in \secref{vdproof} we consider the converse, \ie\ how
to find an ordering of the information base from a given influence
diagram $\dag$ such that the graphical check of \secref{check}
succeeds.

We impose \condref{pos}.  It is then easy to see that \condref{ac2}
will hold (and in fact $\Gamma = {\cal E}$).  We also impose
\condref{cont} on the control strategy $\ce$.

For each $i=0, \ldots, N$, we now construct an artificial joint
distribution $p_i(\,\cdot\,)$ for all the domain variables as follows.
The distribution $p_i$ factors according to the ID $\dag' = \dag$ with
the node $\sigma$ removed.  The parent-child tables for any variable
$V \in {\cal L}\cup{\cal U}$ are unchanged from the original ones for
$\dag$ (which do not involve $\sigma$).  That for any action variable
$A_j$ for $j \leq i$ is the same as for $\dag$, conditional on $\sigma
= \co$; while that for $A_j$ ($j > i$) is the same as for $\dag$,
conditional on $\sigma = \ce$.

With this definition, $p_0(\,\cdot\,) \equiv p(\,\cdot\,;\,\ce)$, so
that \eqref{0=e} holds.  Properties~\eqref{acnew}, and \eqref{l} for
$i \leq N$, hold because the joint distribution of all variables up to
and including $L_i$ is the same under $p_{i-1}$ as under
$p(\,\cdot\,;\,\co)$; for \eqref{l} with $i = N+1$, when $L_{N+1}
\equiv Y$, we also use the fact that extended stability, \ie\ 
\condref{sigout}, implies that the distribution of $Y$ given all
earlier domain variables is the same under both $\ce$ and $\co$.

Finally \eqref{a} holds because, by construction, the parent-child
distribution for $A_i$ has the same specification for $p_{i-1}(\cdot)$
as for $p(\,\cdot\,;\ce)$ --- and, by \condref{cont}, $\parents{A_i}
\subseteq (\overline L_i, \overline A_{i-1})$.

\subsubsection{Graphical check}
\label{sec:check}
We have shown that, under Conditions~\ref{cond:cont} and
\ref{cond:pos}, properties \eqref{0=e}--\eqref{a} hold automatically
for our above construction of $(p_i)$.  However, whether or not
\eqref{y} holds will depend on more specific conditional independence
properties of the problem under study.  We now describe a graphical
method based on IDs for checking this property.

For each action node $A\in{\cal A}$ we identify two subsets,
$\pa{\co}{A}$ and $\pa{\ce}{A}$, of $\pa{\dag}{A}$, such that, when
$\sigma = \co$ [resp.\ $\ce$], the conditional distribution of $A$,
given its domain parents, can be chosen to depend only on
$\pa{\co}{A}$ [resp.\ $\pa{\ce}{A}$].

To ensure \condref{cont}, we suppose:
\begin{cond}
  \label{cond:contdag}
  $\pa{\ce}{A}\subseteq {\cal L}\cup{\cal A}$.
\end{cond}

In order to investigate \eqref{y} for a specific value of $i$, we now
construct, for $0 \leq i \leq N+1$, a new ID $\dag_i$ on ${\cal V}$,
as follows.  The only arrow out of $\sigma$ (again a founder node) is
now into $A_i$.  For $j < i$, the parent set of $A_j$ is
$\pa{\co}{A_j}$ with conditional distributions determined as under
$\co$; for $j > i$ it is $\pa{\ce}{A_j}$, with conditional
distributions determined as under $\ce$; finally, for $A_i$ it is
$(\parents{A_i}\,;\, \sigma)$, with conditional distributions exactly
as in $\dag$.  Any domain variable $V\in{\cal L}\cup{\cal U}$ has the
same parent set $\parents{V}$ (which will not include $\sigma$) and
conditional distributions as in $\dag$.  We shall use $\anc_i(\cdot)$
to denote a minimal ancestral set in $\dag_i$, with similar usages of
$\nd_i$, $\cip_i$, \etc

It is easy to see that the joint density of all the domain variables
in $ \dag_0 = \dag_\ce$ is $ p_0 = p_\ce$; in $\dag_{N+1} = \dag_\co$
it is $ p_{N+1} = p_\co$; while in $\dag_i$, given $\sigma = \co$ it
is $p_{i-1}$, and given $\sigma = \ce$ it is $p_{i}$.  Thus \eqref{y}
will certainly hold if
\begin{equation}
  \label{eq:graphsep}
  \inda Y {\sigma} {(\overline L_i, \overline A_i)} i
\end{equation}
holds.  We can easily check \eqref{graphsep} by inspection of the
graph $\dag_i$.  Note that $\dag_0$ is similar to the `manipulated'
DAG of \textcite{sgs:book}.

In summary we have shown the following:
\begin{theorem}
  \label{thm:extstabg}
  Under Conditions~\ref{cond:pos} and \ref{cond:contdag}, if the
  graphical separation property \eqref{graphsep} holds for each $i$,
  then we can compute the consequence of regime $\ce$ from data
  gathered under regime $\co$ by means of the $G$-recursion
  \eqref{standrecur}, starting with $f_N$ as in \eqref{fngen}, and
  ending with $f_0 = p(y\,;\,\ce)$.
\end{theorem}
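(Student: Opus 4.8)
The plan is to treat \thmref{extstabg} as a packaging of \lemref{induction}: under the theorem's hypotheses it suffices to verify that, for the explicit auxiliary distributions $(p_i)$ built in \secref{extstab}, \condref{ac2} holds and the whole list of properties \eqref{0=e}--\eqref{y} holds; \lemref{induction} then delivers at once that the $G$-recursion \eqref{standrecur} (equivalently \eqref{standrecura}--\eqref{standrecurl}) is legitimate for the function $f$ of \eqref{fi}, with boundary values \eqref{fngen} and terminal value \eqref{fempty}.

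Most of this checklist has already been discharged in the run-up to the theorem, and I would simply invoke it. From \condref{pos} one gets \condref{ac2}, and in fact $\Gamma={\cal E}$, so only histories in ${\cal E}$ ever figure in the recursion. From \condref{contdag}, $\pa{\ce}{A_i}\subseteq{\cal L}\cup{\cal A}$; since the domain-variable predecessors of $A_i$ in the ordering induced by ${\cal I}'$ are exactly $(\overline L_i,\overline A_{i-1})$, this gives $\pa{\ce}{A_i}\subseteq(\overline L_i,\overline A_{i-1})$, which is \eqref{controlpar}, so \condref{cont} holds and $\ce$ is a genuine control strategy for the \emph{observable} information base. With \condref{cont} in force, the construction of the $(p_i)$ already yields \eqref{0=e}, \eqref{acnew}, \eqref{a}, and \eqref{l} --- the latter for $i\le N$ because the joint law of everything up to and including $L_i$ coincides under $p_{i-1}$ and under $p(\,\cdot\,;\co)$, and for $i=N+1$ because extended stability (\condref{sigout}) makes the law of $Y$ given all earlier domain variables regime-independent. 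So the sole remaining property to establish is \eqref{y}.

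For \eqref{y} I would work with the auxiliary influence diagrams $\dag_i$ of \secref{check}. Two facts are combined. First, the law that $\dag_i$ assigns to the domain variables is $p_{i-1}$ for one value of $\sigma$ and $p_i$ for the other --- a direct consequence of the way the action parent--child tables of $\dag_i$ are specified ($\co$-tables for $A_j$ with $j<i$, $\ce$-tables for $j>i$, and for $A_i$ itself the full $\dag$-table, which depends on $\sigma$). Second, applying the moralization criterion \eqref{moralcrit} to $\dag_i$ converts the hypothesized separation $\inda Y \sigma {(\overline L_i,\overline A_i)} i$ into the probabilistic statement $\ind Y \sigma {(\overline L_i,\overline A_i)}$ for the joint law encoded by $\dag_i$. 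Put together, these force $p_{i-1}(y\mid\overline l_i,\overline a_i)=p_i(y\mid\overline l_i,\overline a_i)$ at every history $(\overline l_i,\overline a_i)$ that receives positive probability under both settings of $\sigma$. The one genuinely delicate point --- and what I would flag as the main obstacle --- is the positivity accounting needed to be sure this covers all of $\Gamma_{i-1}$, where \eqref{y} is actually demanded: a history $h\in\Gamma_{i-1}$ lies in ${\cal B}_{i-1}$ by the definition of $\Gamma_{i-1}$, and it lies in ${\cal B}_i$ because $\Gamma_{i-1}\subseteq\Gamma\subseteq{\cal O}$ and \eqref{acnew} (at index $i$) then places $h$ in ${\cal B}_i$; so $p_{i-1}$ and $p_i$ both charge $h$, and the conditional-independence statement genuinely bites there.

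Having verified \condref{ac2} and \eqref{0=e}--\eqref{y}, I would close by quoting \lemref{induction}: the $G$-recursion \eqref{standrecur} holds for $f$ as in \eqref{fi}. Seeding it with the boundary values \eqref{fngen} --- only histories in $\Gamma={\cal E}$ need be retained --- and stepping down through shorter histories, we terminate at $f(\emptyset)$, which by \eqref{fempty} equals $p(y\,;\,\ce)$; this is precisely the claimed computation of the consequence of $\ce$ from data gathered under $\co$. Beyond the positivity bookkeeping of the preceding paragraph, and the (routine) verification that $\dag_i$ really does carry $p_{i-1}$ and $p_i$ on the two values of $\sigma$, I would not expect any further difficulty.
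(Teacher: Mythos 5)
Your proposal is correct and follows essentially the same route as the paper: construct the auxiliary distributions $(p_i)$ of \secref{extstab}, verify \condref{ac2} and properties \eqref{0=e}--\eqref{a} from Conditions~\ref{cond:pos} and \ref{cond:contdag}, obtain \eqref{y} by applying the moralization criterion \eqref{moralcrit} to the diagrams $\dag_i$ (whose domain laws are $p_{i-1}$ and $p_i$ on the two settings of $\sigma$), and then invoke \lemref{induction}. Your explicit positivity bookkeeping --- checking that every history in $\Gamma_{i-1}$ is charged by both $p_{i-1}$ and $p_i$, via \eqref{acnew}, so that the conditional independence obtained from \eqref{graphsep} actually applies where \eqref{y} is needed --- is a point the paper leaves implicit, and is a welcome addition rather than a deviation.
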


A variant of this approach is described in \textcite{jmr:lns}, and
works as follows.  Let $\dag_i'$ be obtained from $\dag_i$ by omitting
the node $\sigma$, and deleting all arrows out of $A_i$.  Because
moralization links in $\dag_i$ involving $\sigma$ can only be to
predecessors of $A_i$, it is not difficult to see there exists a path
from $Y$ to $\sigma$ avoiding $(\overline L_i, \overline A_i)$ in
$\ma_{\dag_i}(Y, \overline L_i, \overline A_i)$ if and only if there
exists such a path from $Y$ to $\parents{A_i}$ in $\ma_{\dag_i'}(Y,
\overline L_i, \overline A_i)$.  And the latter condition can in turn
be seen to be equivalent to the existence, in that graph, of a path
from $Y$ to ${A_i}$ avoiding $(\overline L_i, \overline A_{i-1})$.
Thus $\inda Y {\sigma} {(\overline L_i, \overline A_i)} i$ if and only
if $\inda Y {A_i} {(\overline L_i, \overline A_{i-1})} {\dag_i'}$.
Hence we can prove \eqref{y} by demonstrating the latter property.

It is shown in \textcite{apd/vd:uai08} that, under certain further
conditions --- informally, that each intermediate variable has some
influence on the response under the interventional regime --- when the
graphical method described above succeeds we can deduce that the
problem in fact exhibits simple stability with respect to the observed
information base.

\subsection{Examples}
\label{sec:examples}

\subsubsection{Stability}
\label{sec:relation}
We first show that the conditions of \secref{simple} are a special
case of those of \secref{grec}, by verifying that the construction of
\secref{check} works for the case of simple stability, as represented
by \figref{simpdiag}.  In this case the $(U_i)$ are absent, and, for
each domain variable $V$, $\pa{\ce}{V} = \pa{\co}{V} = \pre{V}$.  Thus
$\dag_i$ consists of the complete directed graph on $(L_1, A_1,
\ldots, L_N, A_N, Y)$, together with an additional regime node
$\sigma$ and an arrow from $\sigma$ to $A_i$.  \figref{simpdiagi}
shows these graphs for the case $N=2$, and \figref{simpdiagidash} the
corresponding graphs $\dag_i'$.

\begin{figure}[ht]
  \begin{centering}
    \epsfxsize=1.8in \epsfbox{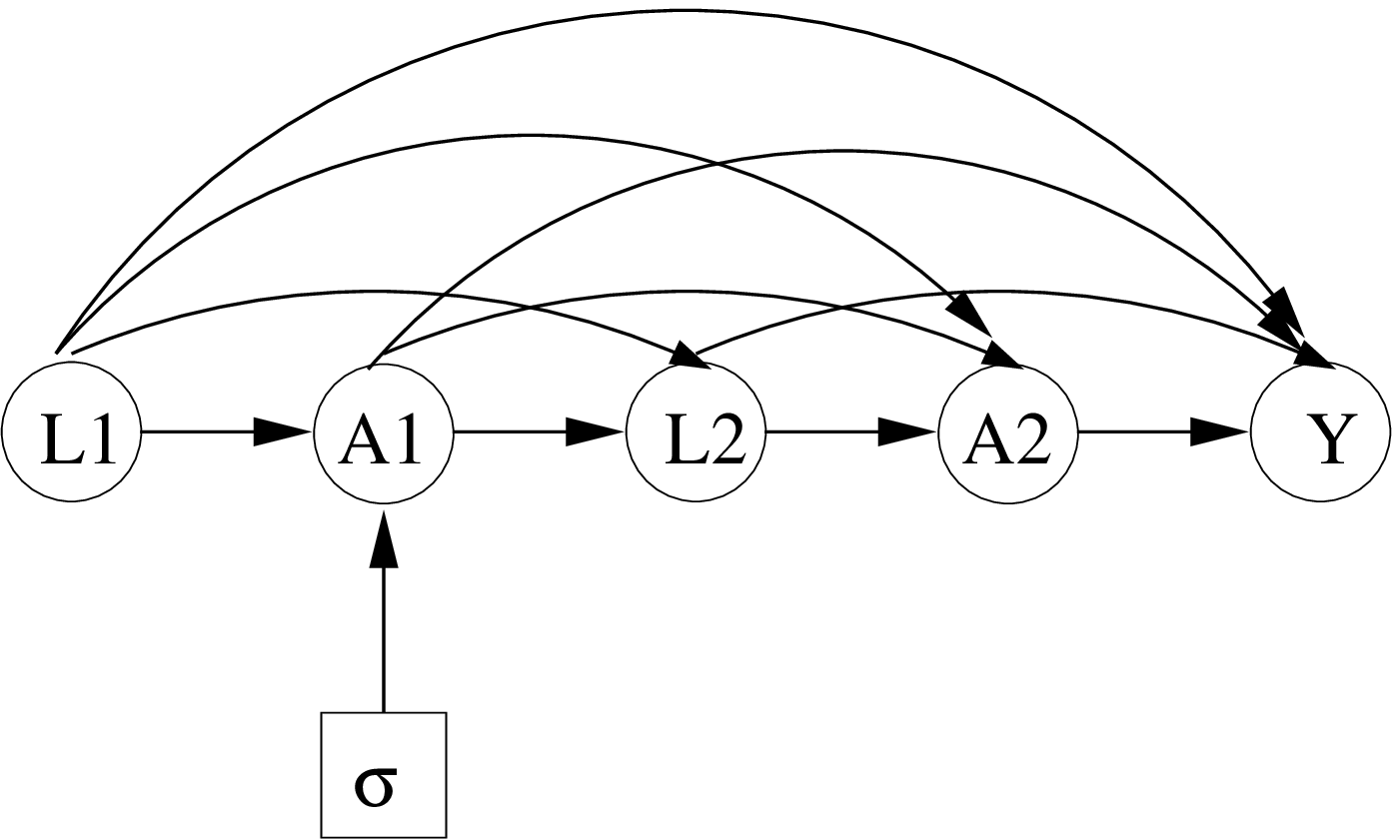} \quad\quad
    \epsfxsize=1.8in \epsfbox{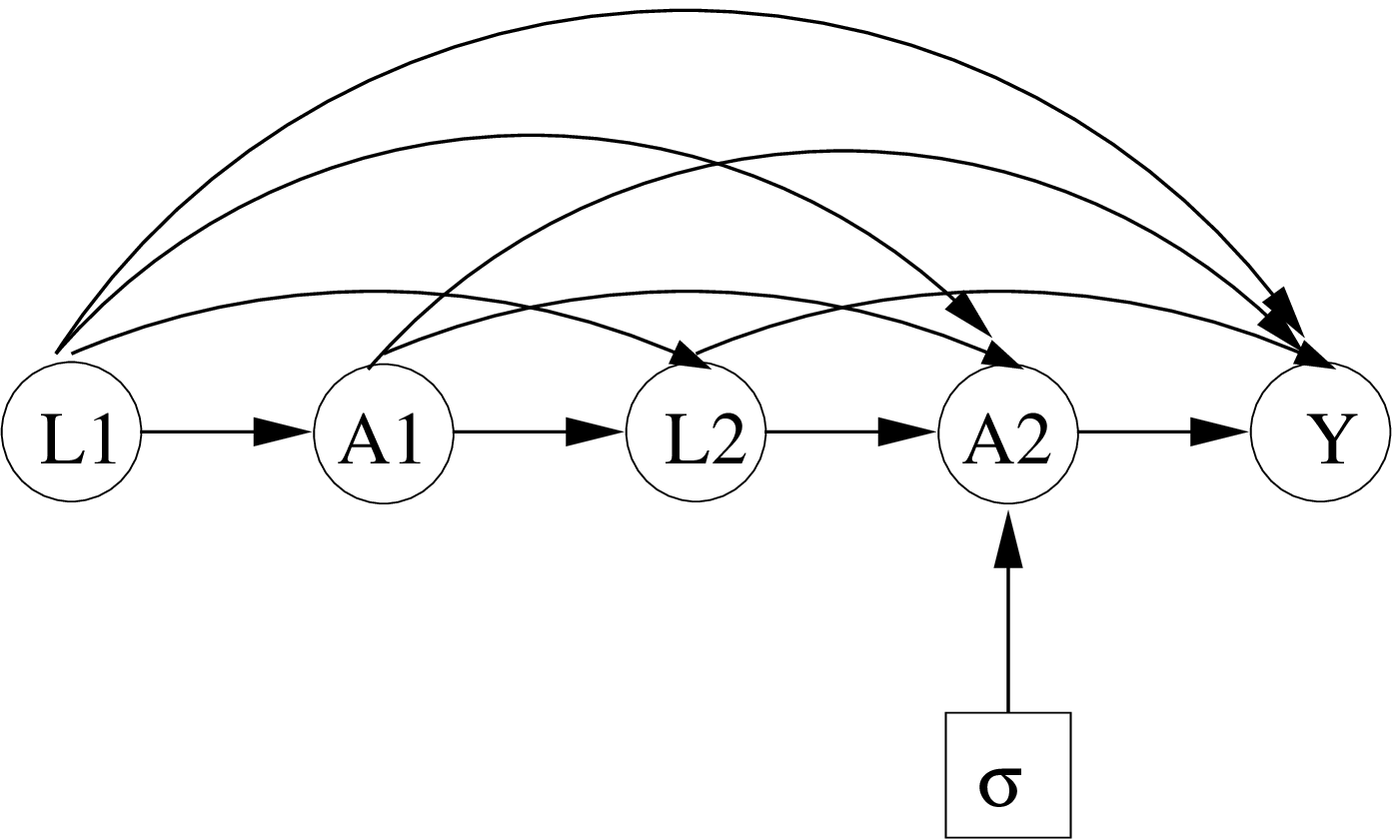}
    \caption{Influence diagrams $\dag_1$, $\dag_2$ for stability
      ($N=2$)}
    \label{fig:simpdiagi}
  \end{centering}
\end{figure}

\begin{figure}[ht]
  \begin{centering}
    \epsfxsize=1.8in \epsfbox{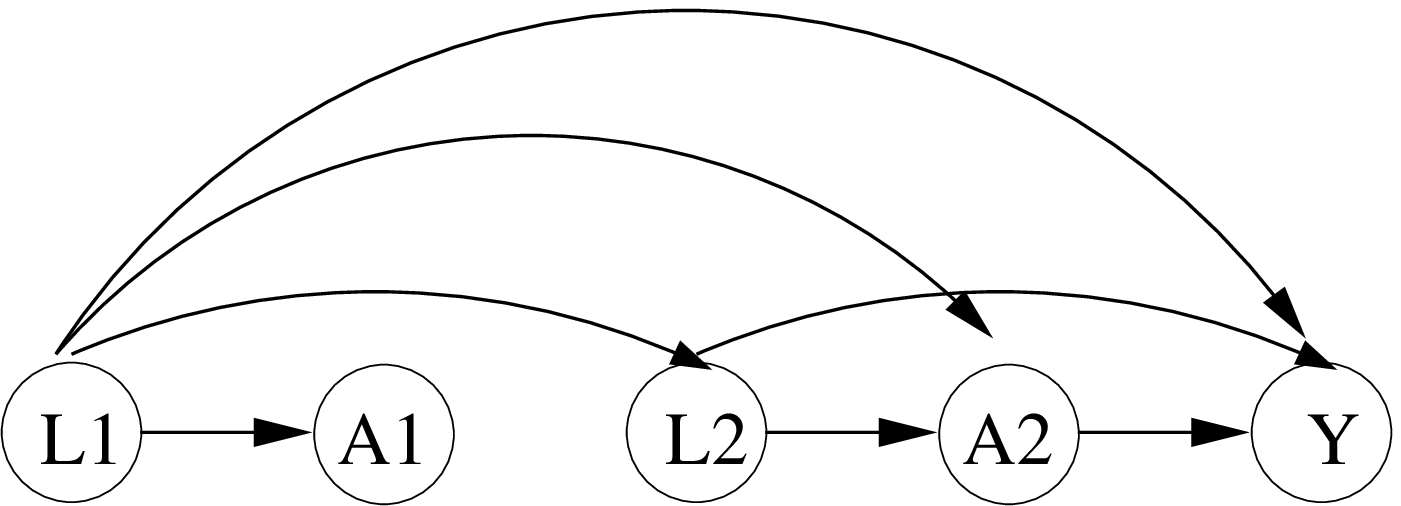} \quad\quad
    \epsfxsize=1.8in \epsfbox{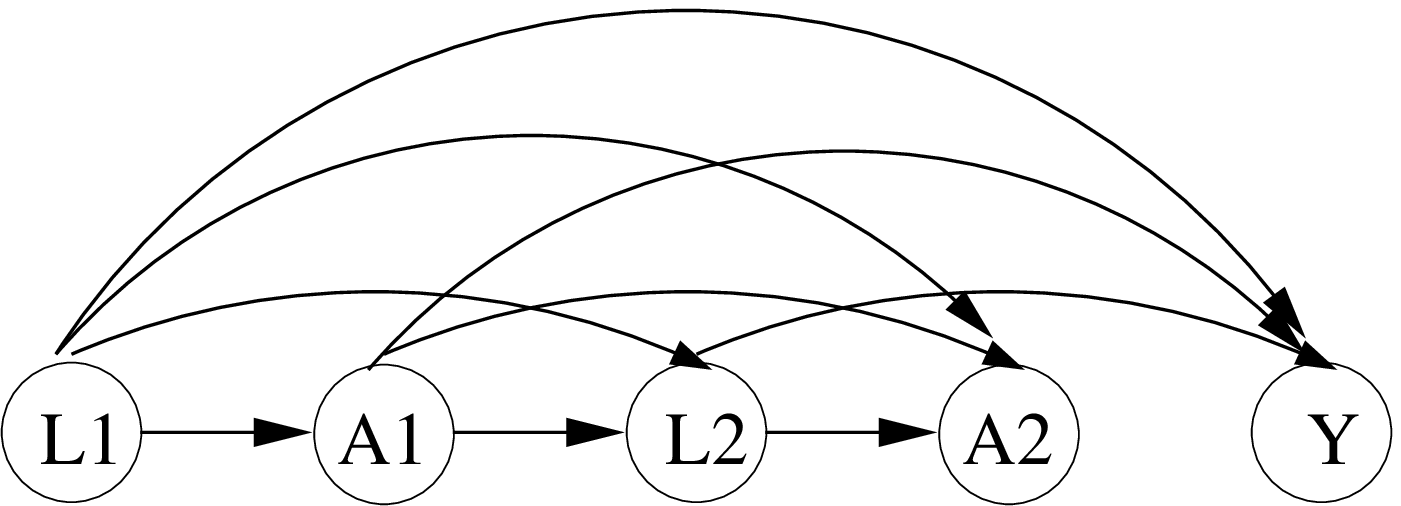}
    \caption{Influence diagrams $\dag_1'$, $\dag_2'$ for stability
      ($N=2$)}
    \label{fig:simpdiagidash}
  \end{centering}
\end{figure}

Since, after moralization of $\dag_i$, $\sigma$ has direct links only
into $(\overline L_i, \overline A_i)$, any path in this moral graph
joining $Y$ to $\sigma$ must intersect $(\overline L_i, \overline
A_i)$, whence we deduce \eqref{y}.  Equivalently, there is no path in
$\dag_i'$ from $Y$ to $A_i$ avoiding $(\overline L_i, \overline
A_{i-1})$.  Hence we have confirmed that, when stability holds, it is
possible to construct a sequence of joint densities $p_i$ satisfying
\eqref{N=o}--\eqref{y}.

\subsubsection{$G$-recursion without stability}
\label{sec:without}
More interesting is the possibility of applying the construction of
\secref{extstab} to justify $G$-recursion even in cases where simple
stability does not hold.  This is illustrated by the following
example, based on \textcite{pearl/robins:uai95} (and see
\textcite{jr:cma} and \textcite{jmr:lns} for description of medical
scenarios that are reasonably captured by this example).

\begin{ex}
  \label{ex:p+r}
  \figref{pearlrobins} shows a specific model incorporating extended
  stability for the information base $(U_1, A_1, U_2, L_2, A_2, Y)$
  (with $L_1 = \emptyset$).  Note that this does not embody simple
  stability, since moralization would create a direct link between
  $\sigma$ and $U_1$, and hence a path $L_2$---$U_1$---$\sigma$ that
  avoids $A_1$.  We thus can not deduce $\ind {L_2} \sigma {A_1}$, as
  would be required for simple stability.
  
  \begin{figure}[ht]
    \begin{centering}
      \epsfxsize=2.1in \epsfbox{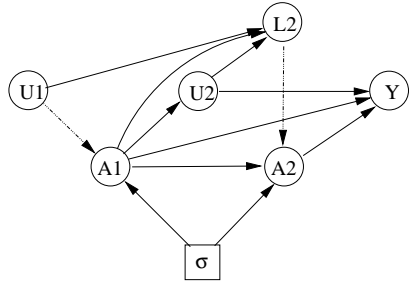}
      \caption{An ID displaying non-stability}
      \label{fig:pearlrobins}
    \end{centering}
  \end{figure}
  
  We use stippled arrows to represent independence under the control
  regime $\ce$.  Thus the stippled arrow from $U_1$ to $A_1$ in
  \figref{pearlrobins} represents the property
  \begin{equation}
    \label{eq:stipple1}
    \ind {A_1} {U_1} {\sigma = \ce},
  \end{equation}
  which is \eqref{controlpar} for $i=1$.  (The equivalent property for
  $i=2$ is already implied by the lack of any arrows from $U_1$ and
  $U_2$ to $A_2$).
  
  The stippled arrow from $L_2$ to $A_2$ embodies an additionally
  assumed property:
  \begin{equation}
    \label{eq:stipple2}
    \ind {A_2} {L_2} {(A_1\,;\, \sigma = \ce)}.
  \end{equation}
  That is, we are supposing that interventional assignment of $A_2$
  can only depend (deterministically or stochastically) on the value
  chosen for the previous treatment, $A_1$.  This is a restriction on
  the type of interventional strategy $\ce$ that we are considering.
  It will turn out that we can identify the causal effect of $\ce$
  from the observational data gathered under $\co$, using
  $G$-recursion, only for strategies $\ce$ of this special type.
  
  In this problem we thus have $\pa{\co}{A_1} = U_1$, $\pa{\ce}{A_1} =
  \emptyset$, $\pa{\co}{A_2} = (A_1, L_2)$, $\pa{\ce}{A_2} = A_1$.
  The constructed IDs $\dag_1$ and $\dag_2$ are shown in
  \figref{pearlrobinsi}, and the variant forms $\dag_1'$ and $\dag_2'$
  \cite[Figure~2]{pearl/robins:uai95} in \figref{pearlrobinsidash}.

  \begin{figure}[h]
    \begin{centering}
      \epsfxsize=1.8in \epsfbox{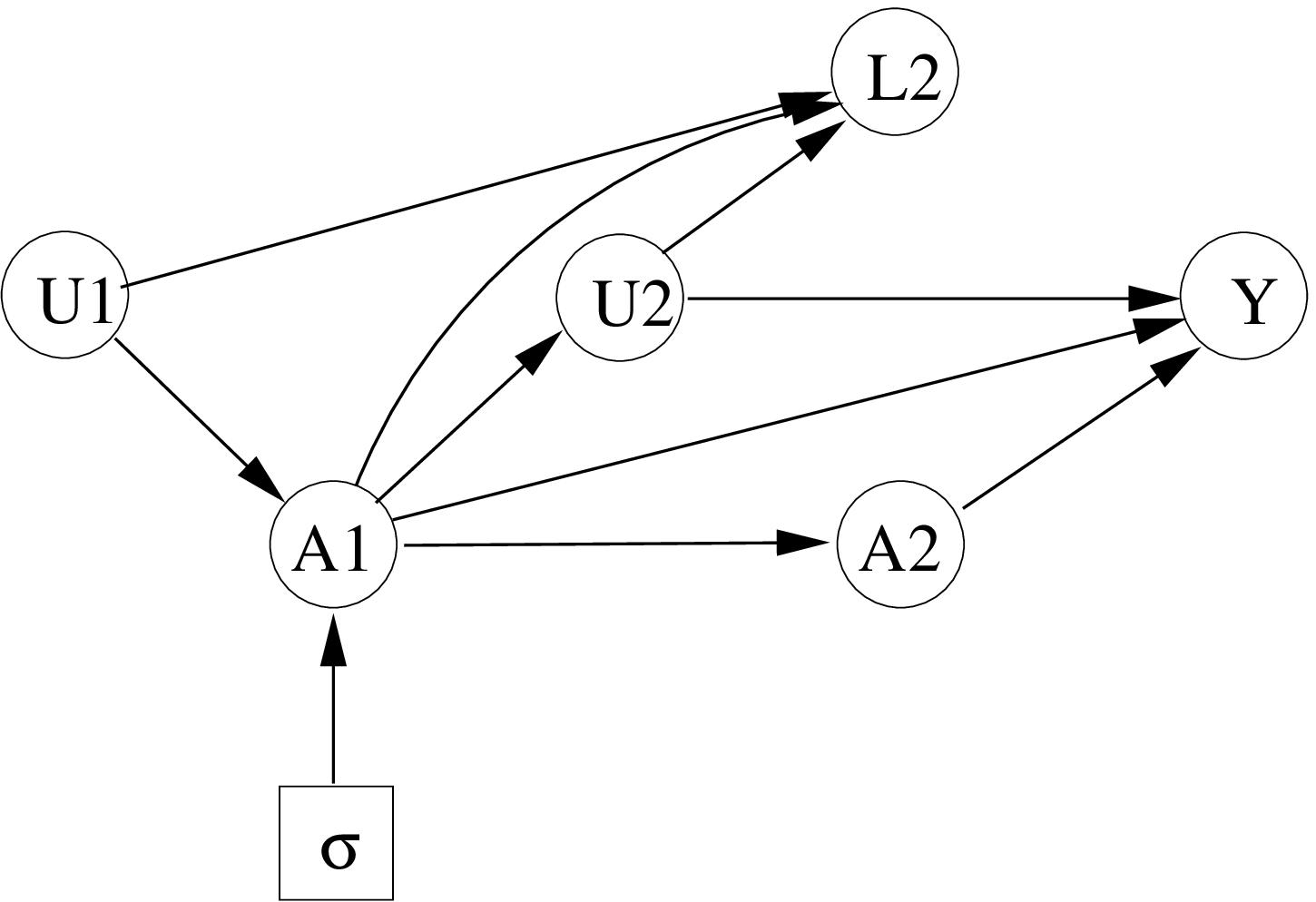} \quad\quad
      \epsfxsize=1.8in \epsfbox{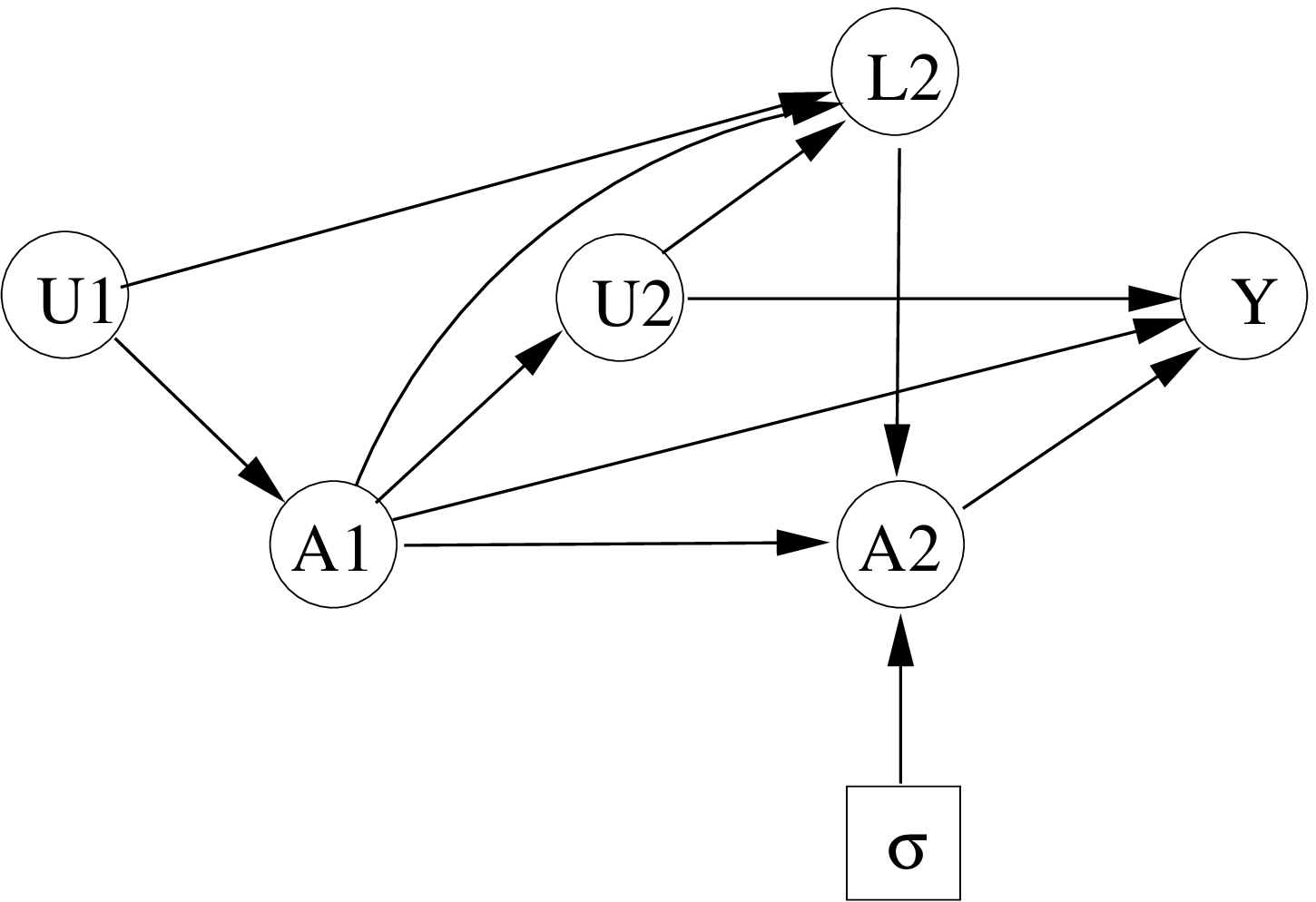}
      \caption{Influence diagrams $\dag_1$, $\dag_2$ for
        \figref{pearlrobins}}
      \label{fig:pearlrobinsi}
    \end{centering}
  \end{figure}
  
  \begin{figure}[h]
    \begin{centering}
      \epsfxsize=1.8in \epsfbox{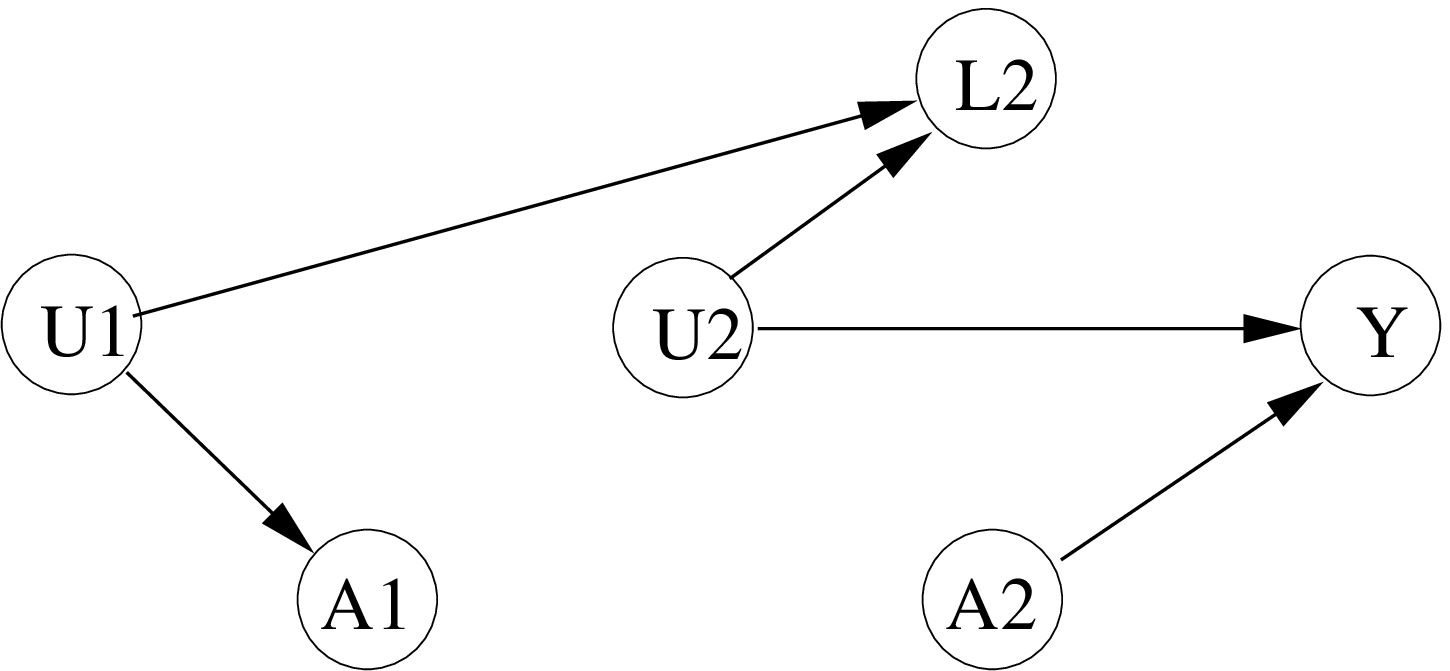} \quad\quad
      \epsfxsize=1.8in \epsfbox{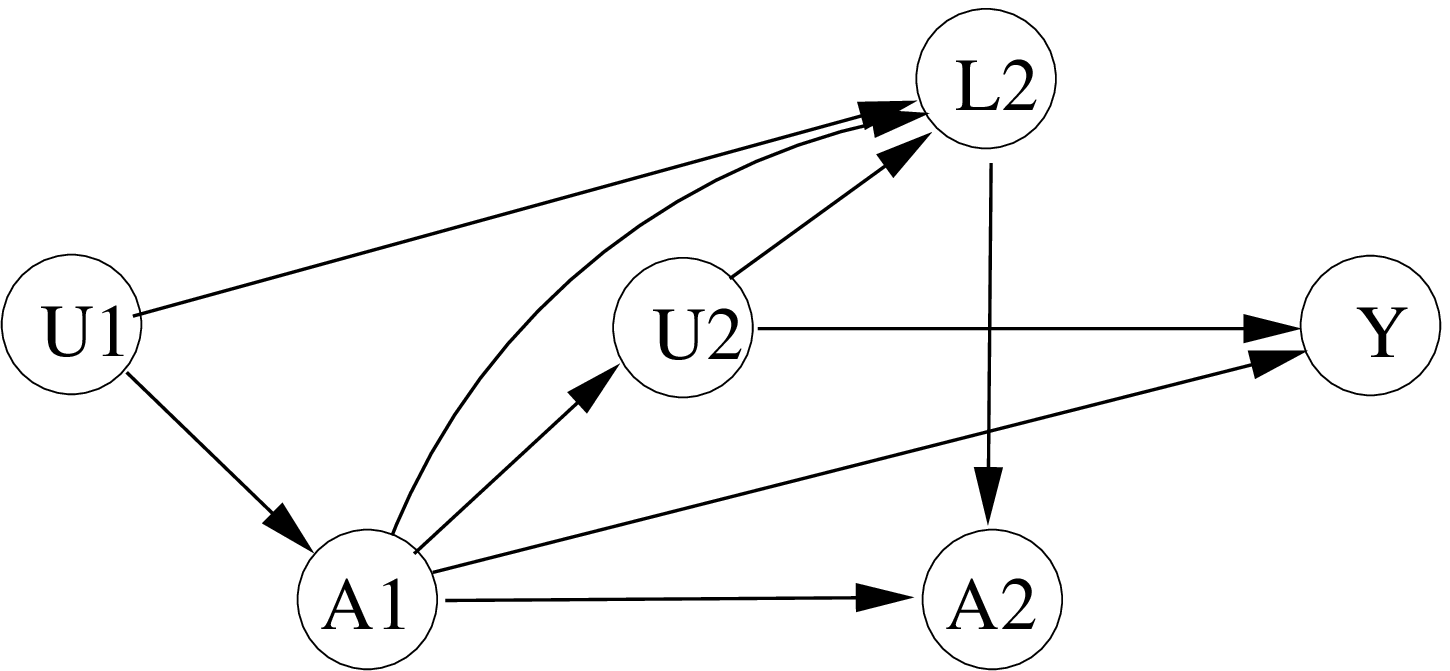}
      \caption{Influence diagrams $\dag_1'$, $\dag_2'$ for
        \figref{pearlrobins}}
      \label{fig:pearlrobinsidash}
    \end{centering}
  \end{figure}
  
  \begin{figure}[h]
    \begin{centering}
      \epsfxsize=2.1in \epsfbox{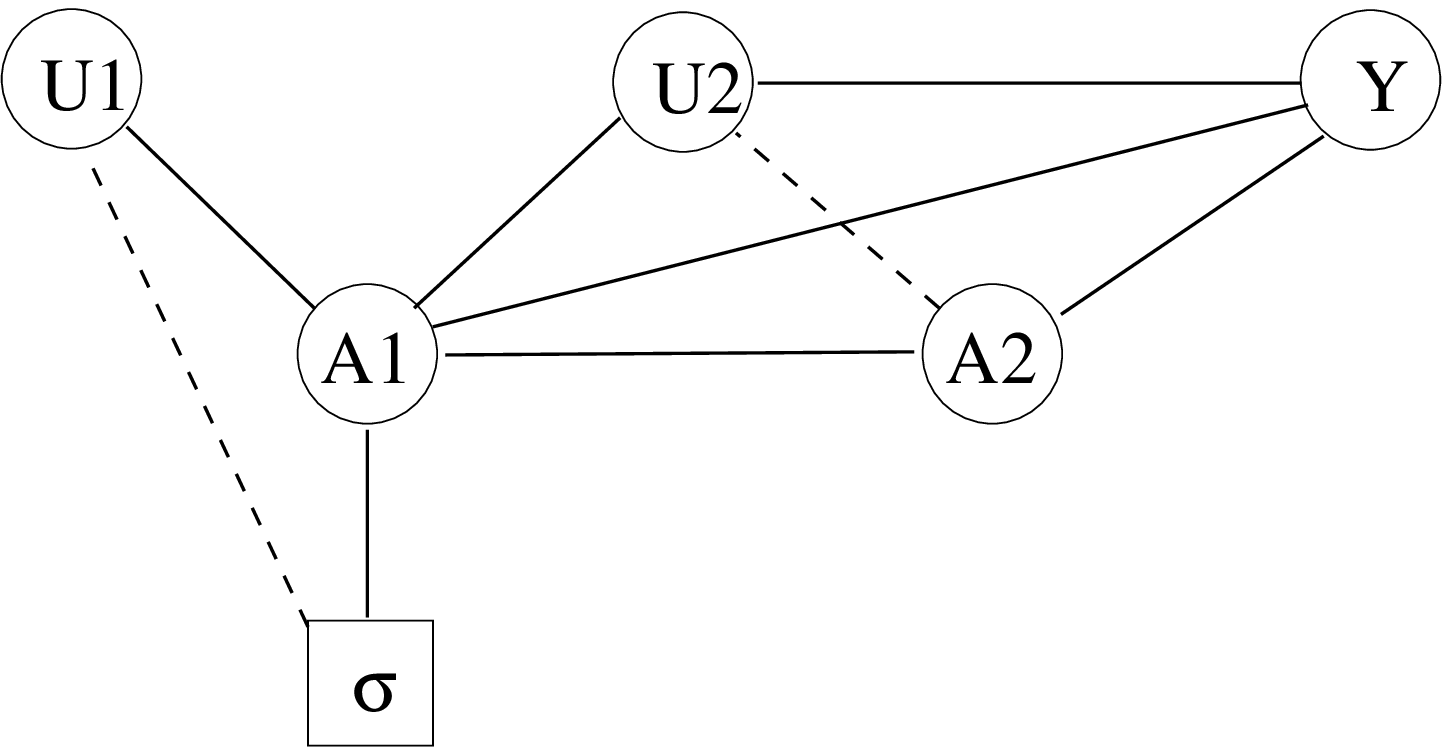} \quad\quad\quad
      \epsfxsize=2.1in \epsfbox{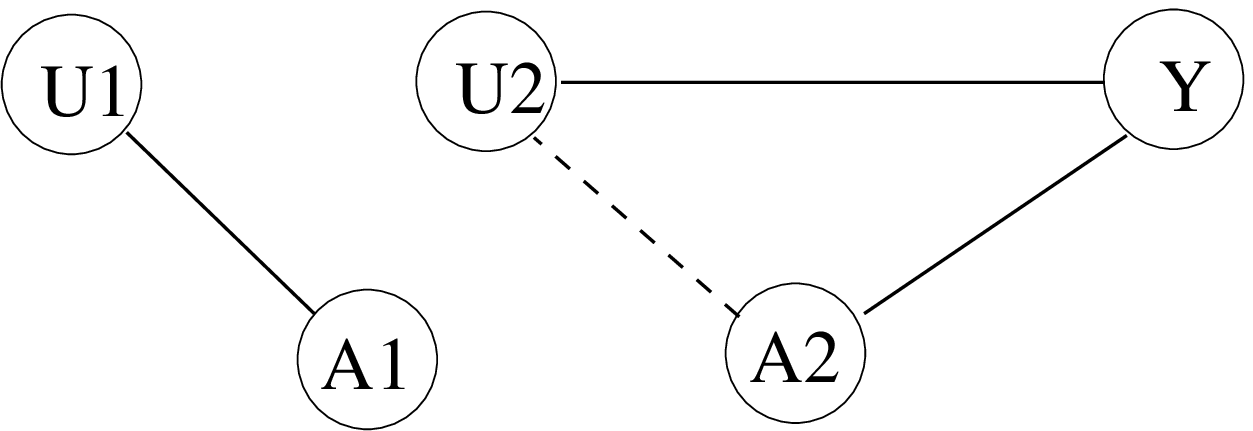}
      \caption{Relevant moral ancestral graphs, for $\dag_1$ and
        $\dag_1'$}
      \label{fig:pearlrobins4}
    \end{centering}
  \end{figure}
  
  We first examine $\dag_1$ to see if $\inda Y {\sigma} {A_1}
  {\dag_1}$.  The relevant moral ancestral graph (see
  \figref{pearlrobins4}) is easily seen to have the desired separation
  property: thus we have shown \eqref{y} for $i=1$.  Alternatively,
  from examination of the relevant moral ancestral graph based on
  $\dag_1'$ we readily see the desired property $Y \,\cip_{\dag_1'}
  A_1$.  (Note that this approach does {\em not\/} succeed if we allow
  $A_2$ to depend on $L_2$ under $\ce$, thus retaining an arrow from
  $L_2$ to $A_2$ and so making $L_2$ an ancestor of $Y$ in $\dag_1$:
  in the now larger relevant moral ancestral graph formed from
  $\dag_1$ we could then trace a path $Y$---$U_2$---$U_1$---$\sigma$
  from $Y$ to $\sigma$ avoiding $A_1$.)

  Finally, since in $\dag_2$ neither $U_1$ nor $U_2$ is a parent of
  $A_2$, even after moralization there will be no direct link from
  $\sigma$ to either $U_1$ or $U_2$: consequently any path from $Y$ to
  $\sigma$ will have to intersect $(A_1, L_2, A_2)$.  Equivalently, we
  see that in $\dag_2'$, after moralization (which adds a futher link
  between $U_1$ and $U_2$) every path from $Y$ to $A_2$ intersects
  $(A_1, L_2)$.  We deduce $\ind Y {\sigma} {(A_1, L_2, A_2)}$, \ie
  \eqref{y} for $i=2$.
  
  If we now assume Conditions~\ref{cond:pos} and \ref{cond:cont} then,
  all the required conditions being satisfied, we will have justified
  use of $G$-recursion to identify the consequences of an
  interventional regime $\ce$ of the specified form, from data
  collected under the observational regime $\co$.
\end{ex}
  
The graphical check illustrated above simplifies considerably in the
case of an unconditional interventional strategy $\ce$, where the
values of the action variables are determined in advance, as
considered by \textcite{pearl/robins:uai95}. In this case
$\pa{\ce}{A_i} = \emptyset$ for all $i$, and $\dag_i$ is obtained from
$\dag$ by deleting all arrows into every $A_j$ with $j > i$.  Then
$\dag_i'$ is obtained by further deleting $\sigma$ and all arrows out
of $A_i$.  However, if our aim is to compare strategies, and ideally
find an optimal one, it is necessary also to consider dynamic
strategies.

\section{Constructing an admissible sequence}
\label{sec:vdproof}

In order to apply the graphical check of \secref{check} we need to
have the variables already completely ordered.  More generally, we
could ask whether there exists an ordering $(A_1, \ldots, A_N)$ of
${\cal A}$, and $(L_1, \ldots, L_N)$ of disjoint subsets of ${\cal
  L}$, such that we can apply the construction of \secref{check} to
show \eqref{graphsep}.  Somewhat more restricted, we might suppose an
ordering $(A_1, \ldots, A_N)$ already given, and look for a sequence
$(L_1, \ldots, L_N)$ to satisfy \eqref{graphsep}.  Such a sequence
will be termed {\em admissible\/}.  In this section we assume that a
graphical representation of the problem in form of an ID is given, and
we note that by definition an admissible sequence has to satisfy
$\overline L_i \subseteq \nd (A_i, \ldots,A_N)$. Below, we give
conditions under which we can determine whether such an admissible
sequence exists, and construct one if it does.  We shall need some
general properties of directed-graph separation from
\appref{problemmas}.

We impose the following conditions:
\begin{cond}
  \label{cond:pain}
  For all $i$,
  \begin{displaymath}
    \pa{\ce}{A_i} \subseteq \pa{\co}{A_i}.
  \end{displaymath}
\end{cond}
This can always be ensured by redefining, if necessary,
$\pa{\co}{A_i}$ as $\pa{\co}{A_i}\cup\pa{\ce}{A_i}$, with any added
parents having no effect on the conditional probabilities for $A_i$
under $\co$.

\begin{cond}
  \label{cond:actan}
  Each action variable $A\in{\cal A}$ is an ancestor of $Y$ in
  $\dag_\ce$.
\end{cond}
In typical contexts \condref{actan} will hold, since we would not
normally contemplate an intervention that has no effect on the
response.  Clearly when Conditions~\ref{cond:pain} and
\ref{cond:actan} both hold every $A\in{\cal A}$ is also an ancestor of
$Y$ in $\dag_\co = \dag$.

Define, for $i = 1, \ldots, N$:
\begin{equation}
  \label{eq:m}
  M_i: = {\cal L} \cap \nd_\ce(A_i, A_{i+1}, \ldots, A_N) \cap\anc_i(Y).
\end{equation}

We note that $M_{i-1} \subseteq M_i$.  This follows from
$\anc_{i-1}(Y)\subseteq \anc_i(Y)$ which in turn holds because, by
\condref{pain}, the edge set of $\dag_{i-1}$ is a subset of that of
$\dag_i$.

Now let
\begin{equation}
  \label{eq:L}
  L^*_i: = M_i\setminus M_{i-1},
\end{equation}
so that $M_i = \bar{L}^*_i$.  For the information sequence $(L^*_i)$,
the total information taken into account up to time $i$, $M_i$,
consists of just those variables in ${\cal L}$ that are ancestors of
$Y$ in $\dag_i$, but are not descendants of $A_i$ or any later
actions.

The sequence $(L^*_1, \ldots, L^*_N)$ will be admissible if, for $i =
1, \ldots, N$,
\begin{equation}
  \label{eq:lstar}
  \inda Y {\sigma} {(M_i, \overline{A}_i)} i.
\end{equation}
  
Taking into account \condref{actan} and \eqref{m}, \eqref{lstar}
requires that $M_i \cup \overline{A}_i$ separate $Y$ from $\sigma$ in
the undirected graph $\graf_i$ obtained by moralizing the ancestral
set of $Y$ in $\dag_i$.  It is thus straightforward to check whether
or not it holds.  When it does we shall call $i$ {\em admissible\/}.

The following result can be regarded as simultaneously simplifying,
generalizing, and rendering more operational that of
\textcite{pearl/robins:uai95}.  In particular, it supplies an explicit
construction, while allowing for conditional interventions.

\begin{theorem}
  \label{thm:vd}
  Under Conditions~\ref{cond:pain} and \ref{cond:actan}, if any
  admissible sequence exists then $(L_1^*, \ldots, L_N^*)$ is
  admissible.
\end{theorem}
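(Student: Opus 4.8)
The plan is to check the two things that ``admissible'' requires of $(L_1^*,\ldots,L_N^*)$: that it is a legitimate information sequence, and that it satisfies \eqref{lstar} for every $i$.

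The first is bookkeeping. The $L_i^*$ are subsets of ${\cal L}$ by construction, and disjoint since $M_{i-1}\subseteq M_i$. To see $\overline L_i^*=M_i\subseteq\nd(A_i,\ldots,A_N)$, take $v\in M_i$ and a directed path from $v$ to $Y$ in $\dag_i$; the ordering forces this path to miss every $A_j$ with $j\ge i$ (such an $A_j$ would have to be simultaneously an ancestor of $v$ and $\dag$-earlier than $v$), so it runs entirely through the part of $\dag_i$ that agrees with $\dag_\ce$, whence $v$ is an ancestor of $Y$ in $\dag_\ce$; and were $v$ a $\dag$-descendant of some $A_j$ with $j\ge i$, following a directed path from $A_j$ to $v$ and jumping past each action node where $\dag$ and $\dag_\ce$ differ would exhibit $v$ as a $\dag_\ce$-descendant of some later action, contradicting $v\in\nd_\ce(A_i,\ldots,A_N)$. \condref{pain} enters through the edge-set relations among $\dag_\ce$, $\dag_i$ and $\dag$ used here.

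For the substantive part I would fix $i$ and argue by contradiction. By the remark preceding the theorem, \eqref{lstar} for this $i$ says that $M_i\cup\overline A_i$ separates $Y$ from $\sigma$ in $\graf_i$, the moral ancestral graph of $Y$ in $\dag_i$ (one uses $M_i\subseteq\anc_i(Y)$ and, by Conditions~\ref{cond:pain} and \ref{cond:actan}, $\overline A_i\subseteq\anc_i(Y)$). Suppose it fails: then there is a path $P$ from $Y$ to $\sigma$ in $\graf_i$ avoiding $M_i\cup\overline A_i$. Let $(L_1,\ldots,L_N)$ be an admissible sequence, so that $\inda{Y}{\sigma}{(\overline L_i,\overline A_i)}{i}$ holds; writing $G:=\ma_{\dag_i}(Y,\sigma,\overline L_i,\overline A_i)$, this says that $\overline L_i\cup\overline A_i$ separates $Y$ from $\sigma$ in $G$. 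The key observation is that $\graf_i$ is a subgraph of $G$: moralizing a larger ancestral set retains every directed $\dag_i$-edge among $\anc_i(Y)$ and every moralization link of $\graf_i$, and can only add further edges among those nodes --- one of the general directed-graph separation facts I would take from \appref{problemmas}. Hence $P$ is a path in $G$ as well. Moreover each vertex of $P$ lies in $\anc_i(Y)\cup\{\sigma\}$, while $\overline L_i\cap\anc_i(Y)\subseteq M_i$ because $\overline L_i\subseteq{\cal L}\cap\nd(A_i,\ldots,A_N)\subseteq{\cal L}\cap\nd_\ce(A_i,\ldots,A_N)$; so $P$, which avoids $M_i$, also avoids $\overline L_i$, and it avoids $\overline A_i$. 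Thus $P$ joins $Y$ to $\sigma$ in $G$ while missing $\overline L_i\cup\overline A_i$, contradicting admissibility of $(L_1,\ldots,L_N)$. This establishes \eqref{lstar} for all $i$, and together with the first part $(L_1^*,\ldots,L_N^*)$ is admissible.

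The step I expect to be the main obstacle is making the subgraph comparison airtight: one must track precisely which nodes and moralization links occur in $\graf_i$ as against the larger moral ancestral graph $G$ attached to the admissible sequence, and verify that $P$ never leaves $\anc_i(Y)\cup\{\sigma\}$, so that the inclusion $\overline L_i\cap\anc_i(Y)\subseteq M_i$ may legitimately be used to discard $\overline L_i$ from the obstruction. This is exactly the purpose of the separation lemmas collected in \appref{problemmas}; the remaining pieces, including the validity check for $(L_1^*,\ldots,L_N^*)$, are elementary once the edge-set relations among $\dag_\ce$, $\dag_i$ and $\dag$ are kept straight.
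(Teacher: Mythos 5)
Your proposal is correct and follows essentially the same route as the paper's proof: compare against an arbitrary admissible sequence, note $\overline L_i\cap\anc_i(Y)\subseteq M_i$ via $\overline L_i\subseteq{\cal L}\cap\nd_\ce(A_i,\ldots,A_N)$, and transfer the separation from $\overline L_i\cup\overline A_i$ to $M_i\cup\overline A_i$ --- your explicit path-chasing in the moral ancestral graphs is just the content of the two lemmas of \appref{problemmas} unrolled, which the paper instead cites directly. The only addition is your preliminary check that $(L_1^*,\ldots,L_N^*)$ is a legitimate information sequence, which the paper leaves implicit.
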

That is: There exists an admissible sequence if and only if every $i$
is admissible.  In this case $(L^*_1, \ldots, L^*_N)$ is an admissible
sequence.\\

\begin{proof}
  Suppose that there exists some admissible sequence $(L_1, \ldots,
  L_N)$.  Then, for each $i$,
  \begin{equation}
    \label{eq:aa}
    \inda Y {\sigma} {\overline L_i\cup \overline A_i} i.
  \end{equation}
  By \lemref{pr2}, this graph-theoretical separation continues to hold
  if we intersect the conditioning set with $(\anc_i(Y), \sigma)$.
  Since, by \condref{actan}, $\overline A_i\subseteq\anc_i(Y)$, we
  obtain
  \begin{equation}
    \label{eq:bb}
    \inda Y {\sigma} {\left(\overline
        L_i\cap\anc_i(Y)\right)\cup\overline A_i} i. 
  \end{equation}
  But, $\overline L_i\subseteq {\cal L}\cap\nd_e(A_i, \ldots,A_N)$;
  and thus $\overline L_i \cap\anc_i(Y) \subseteq M_i$.  Hence, by
  \lemref{pr2}, \eqref{lstar} holds, and the result follows.
\end{proof}

\begin{ex} (We are indebted to Susan Murphy for this example.)  In the
  problem represented in \figref{susan}, it may be checked that the
  `obvious' choice $L_1 = \{X\}, L_2 = \{Z\}$ is not an admissible
  sequence.  Using the method above yields $L^*_1 = \{X, Z\}, L^*_2 =
  \emptyset$, which is admissible (indeed, yields simple stability, as
  may either be checked directly, or deduced from Theorem 2 in
  \textcite{apd/vd:uai08}).
  \begin{figure}[ht]
    \begin{centering}
      \epsfxsize=2.8in \epsfbox{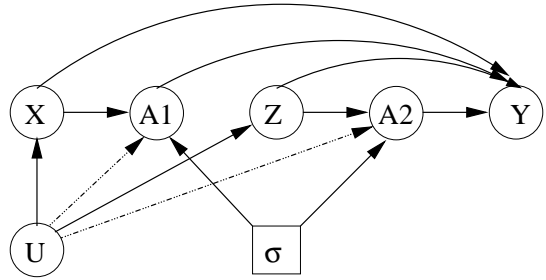}
      \caption{Finding an admissible sequence}
      \label{fig:susan}
    \end{centering}
  \end{figure}
\end{ex}

\subsection{Finding a better sequence}
\label{sec:min}
While the above procedure will always construct an admissible sequence
$(L_1, \ldots, L_N)$ when one exists, that might not be the best
possible.  Thus in \figref{nonmin}, with ${\cal L} = \{X, Z\}$, we
find $L_1^* = \{Z\}, L_2^* = \{X\}$.  These satisfy \eqref{lstar}, so
that the sequence $\{L_1^*, L_2^*\}$ is admissible.  However a smaller
admissible sequence is given by $L_1 = \emptyset, L_2 = \{X\}$.
\begin{figure}[h]
  \begin{center}
    \resizebox{1.8in}{!}{\includegraphics{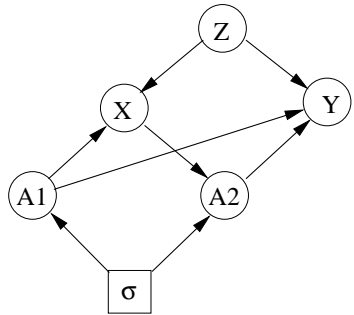}}
    \caption{A choice of admissible sequences}
    \label{fig:nonmin}
  \end{center}
\end{figure}

If we had initially regarded $Z$ as unobservable, so taking ${\cal L}
= \{X\}$, we would have found this smaller sequence.  However in
general we would need hindsight or good fortune to start off with such
a minimal specification of ${\cal L}$.

Even without redefining ${\cal L}$, however, we can often improve on
the sequence given by \eqref{L}.  At each stage $i$ we first check
\eqref{lstar}.  If this fails we abort the process.  Otherwise,
sequentially choose $L_i$ to be any subset of $M_i$, disjoint from
$\overline L_{i-1}$, such that \eqref{aa} holds.  (Since, by
\eqref{lstar}, \eqref{aa} holds for the choice $L_i = M_i\setminus
\overline L_{i-1}$, such a set must exist.)  Then (if the process is
never aborted) we shall have constructed an admissible sequence
$(L_i)$, improving on $(L^*_i)$ in the sense that $\overline L_i
\subseteq \overline L^*_i$.

Ideally we would want the set $L_i$ to be small.  When each $L_i$ is
minimal, in the sense that no proper subset of $L_i$ satisfies
\eqref{aa}, we obtain a generalization of the method of
\textcite{pearl/robins:uai95} for constructing a minimal admissible
sequence.  However in large problems the search for such a minimal
$L_i$ can be computationally non-trivial, and we may have to be
satisfied with some other choices for the $(L_i)$.  Minimality is in
any case not a requirement for admissibility.

\subsection{Admissible orderings of ${\cal A}$}
\label{sec:adma}
In general there will be several orderings of ${\cal A}$ possible.  It
can then happen that an admissible sequence $(L_1, \ldots, L_N)$
exists for one ordering of ${\cal A}$ (which we may then likewise call
{\em admissible\/}), but not for another.

\begin{ex}
  \label{ex:vdex}
  In the ID of \figref{vdpic}, ${\cal U} = \{U\}$, ${\cal L} = \{L\}$,
  ${\cal A} = \{A, B\}$.  Note that $\indo A B$ under either regime.
  Both $A_1 = A, A_2 = B$ and $A_1 = B, A_2 = A$ are possible
  orderings of ${\cal A}$.  For the former choice we find $M_1 =
  \emptyset$; then \eqref{lstar} for $i=1$ becomes $\inda Y {\sigma}
  {A} {\dag_A}$, where $\dag_A$ is $\dag$ with the arrow from $\sigma$
  to $B$ removed.  Since this is easily seen to fail (moralization
  creates a link between $U$ and $\sigma$), \thmref{vd} implies that
  there can be no admissible sequence to support $G$-recursion.
  However if we take $A_1 = B, A_2 = A$, we obtain $M_1 = \emptyset$,
  $M_2 = \{L\}$, and \eqref{lstar} becomes $\inda Y {\sigma} B
  {\dag_B}$ for $i=1$, $\inda Y {\sigma} {(B, L, A)} {\dag_A}$ for
  $i=2$, where $\dag_B$ is $\dag$ with the arrows into $A$ from both
  $\sigma$ and $U$ removed.  Both of these properties are easily
  confirmed to hold.  We can thus (under suitable positivity
  conditions) apply $G$-recursion with respect to the admissible
  ordering $(B, L, A)$.
\end{ex}

As yet we do not have a method that will automatically identify an
admissible ordering of ${\cal A}$ when one exists.

\begin{figure}[h]
  \begin{center}
    \resizebox{1.8in}{!}{\includegraphics{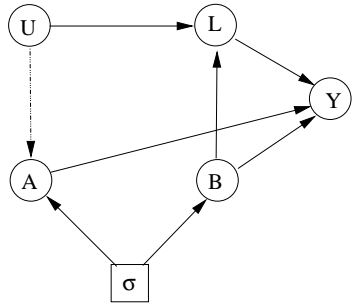}}
    \caption{Unordered actions}
    \label{fig:vdpic}
  \end{center}
\end{figure}

\section{Potential response models}
\label{sec:potresp}
  
In this section, we examine the relationship between the {\em
  potential response (PR)\/} approach to dynamic treatments and our
own decision-theoretic one.

The PR approach typically confines attention to non-randomized, though
possibly dynamic, strategies.  Such a strategy is defined by a
function $g$ on the set of all `partial $L$-histories' of the form
$(\overline l_i)$ ($1 \leq i \leq N)$, such that, for each $i$,
$g(\overline l_i)$ is one of the available options for $A_i$.  We
shall write $\overline g(\overline l_i)$ for the sequence $(g(l_1),
g(l_1,l_2), \ldots, g(\overline l_i))$.  Under this strategy, if at
time $i$ we have observed $\overline L_i = \overline l_i$, the next
action will be $A_i = g(\overline l_i)$.

We henceforth confine attention to a pair of regimes ${\cal S} =
\{\co, \ce\}$, where $\co$ is observational, while $\ce$ is a
non-randomized strategy, determined by a given function $g$ as
described above.

\subsection{Potential responses and stability}
\label{sec:stablerobins}
We first interpret and analyse the model introduced by
\textcite{jr:mm} (see also \textcite{jmr:lns}, Section 3.3;
\textcite{jmr:sbs00}; \textcite{susan}).

We need to introduce, for each regime $s\in \{\co, \ce\}$, a
collection of `potential variables' $\Pi_s := (L_{s,1}, A_{s,1},
\ldots, L_{s,N}, A_{s,N}, L_{s,N+1} \equiv Y_s )$.  It is supposed
that, when regime $s$ is operating, the actual observable variables in
the problem, $(L_1, A_1, \ldots, L_N, A_N, Y)$, will be those in
$\Pi_s$.

Note that, by the definition of $\ce$, we have the functional
constraint
\begin{equation}
  \label{eq:ds}
  A_{\ce, i} = g(\overline L_{\ce , i})\quad(i=1, \ldots, N).
\end{equation}

All the potential variables, across both regimes, are regarded as
having simultaneous existence, their values being unaffected by which
regime is actually followed.%
\footnote{%
  Note, as a matter of logic, that if we follow $\ce$ we shall not be
  able to observe \eg\ $Y_{\co}$ (though see the note after
  \condref{consistency} below).  This is a version of the so-called
  `fundamental problem of causal inference' \cite{pwh:jasa} which has
  been critically discussed by \textcite{apd:cinfer}.%
} %
The effect of following regime $s$ is thus to uncover the values of
some of these, \viz those in $\Pi_s$, while hiding others.

This collection of all potential observables across both regimes is
further considered to have a joint distribution (respecting the
logical constraints \eqref{ds}), whose density we denote by
$p(\cdot)$.  This distribution is supposed unaffected by which regime
is in operation: all this can do is change the relationship between
potential and actual variables.

Since, under $\ce$, $Y \equiv Y_\ce$, the consequence of the
interventional strategy $\ce$ is simply the marginal distribution of
$Y_\ce$.  Our aim is to identify this distribution from observations
made under regime $\co$.

It can be shown directly that this can be effected by means of the
$G$-recursion formula under the following conditions:
\begin{cond}[Positivity]
  \label{cond:ac1a}
  Whenever $p(\overline L_{\co,N} = \overline l_N) > 0$,
  \begin{displaymath}
    p(\overline A_{\co,N} = \overline g(\overline l_N) \mid \overline L_{\co,N} = \overline l_N) > 0.
  \end{displaymath}
\end{cond}
That is, in the observational regime, for any set of values $\overline
l_N$ of the variables $\overline L_N$ that can arise with positive
probability, there is a positive probability that the actions taken
will be those specified by $\ce$.

\begin{cond}[Consistency]
  \label{cond:consistency}
  If $\overline A_{\co,i} = \overline g(\overline L_{\co,i})$, then
  $L_{\co, i+1} = L_{\ce, i+1}$ $(i=0, \ldots, N)$.
\end{cond}
(Note that for $i=0$ the antecedent of this condition is vacuously
satisfied, while for $i=N$ its conclusion is $Y_\co = Y_\ce$.)

That is, if, in the observational regime, we happen to obtain a
partial history $(\overline l_i, \overline a_i)$ that could also be
obtained under the operation of $\ce$, then we will next observe the
identical variable $L_{\ce, i+1}$ that would have been observed if we
had been operating $\ce$.  (This condition of course imposes further
logical constraints on the joint distribution $p$).

\begin{cond}[Sequential ignorability]
  \label{cond:ignore}
  Whenever \begin{math} p(\overline L_{\co,i} = \overline l_i) > 0,
  \end{math}
  \begin{displaymath}
    \ind {A_{\co,i}} {\overline L_\ce^{i+1}} {(\overline L_{\co,i} =
      \overline l_{i}, \overline A_{\co,i-1} = \overline g(\overline
      l_{i-1}))} 
    \quad (i= 1, \ldots, N),
  \end{displaymath}
\end{cond}
(where $\overline L_\ce^j: = (L_{\ce,j}, L_{\ce,j+1}, \ldots,
L_{\ce,N}, Y_\ce)$).

That is, in the observational regime, given any partial history
consistent with the operation of $\ce$, the next action is independent
of all the future potential observables associated with $\ce$.%
\footnote{%
  This is sometimes expressed in a stronger form that drops the
  restriction to future variables, so replacing $L_\ce^{i+1}$ by
  $(\overline L_{\ce,N}, Y_\ce)$ \cite{jmr:sbs00}.  }%

\subsubsection{Connexions}
\label{sec:conn}
We now consider the relationship between the above approach and that
of \secref{simple}, which founds $G$-recursion on the stability
property \eqref{expci2}.  We will show that
Conditions~\ref{cond:ac1a}, \ref{cond:consistency} and
\ref{cond:ignore} imply our conditions in \secref{simple}.  Our
reasoning is, in spirit, very similar to Theorem 3.1 of
\textcite{jmr:lns} (see also \textcite{jr:mm}, Theorem 4.1).

\begin{lem}
  \label{lem:get}
  If Conditions~\ref{cond:consistency} and \ref{cond:ignore} hold,
  then for any sequence $\overline l_{N+1} = (l_1, \ldots, l_{N}, y)$
  such that $p(\overline L_{\ce, N} = \overline l_N) > 0$,
  \begin{equation}
    \label{eq:extend}
    p(\overline L_{\ce}^{i+1} = \overline l^{i+1} \mid \overline L_{\ce, i} = \overline
    l_i, \overline A_{\co,i} = \overline g(\overline l_i)) =
    p(\overline L_{\ce}^{i+1} = \overline l^{i+1} \mid \overline L_{\ce, i} =
    \overline l_i)
  \end{equation}
  for $i = 0, \ldots, N$.
\end{lem}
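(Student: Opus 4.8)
The plan is a forward induction on $i$ (from $i=0$ to $i=N$), with the full sequence $\overline{l}_{N+1}=(l_1,\ldots,l_N,y)$ held fixed. Write $E_i$ for the conditioning event $\{\overline{L}_{\ce,i}=\overline{l}_i,\ \overline{A}_{\co,i}=\overline{g}(\overline{l}_i)\}$ appearing on the left-hand side at stage $i$, and $D_i$ for $\{\overline{L}_{\ce,i}=\overline{l}_i,\ \overline{A}_{\co,i-1}=\overline{g}(\overline{l}_{i-1})\}$, so that $E_i=D_i\cap\{A_{\co,i}=g(\overline{l}_i)\}$ and $D_i=\{L_{\ce,i}=l_i\}\cap E_{i-1}$. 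It is convenient to carry the induction at the level of conditional distributions: I will show that the conditional distribution of $\overline{L}_{\ce}^{i+1}$ given $E_i$ equals its conditional distribution given $\{\overline{L}_{\ce,i}=\overline{l}_i\}$, which in particular gives the displayed pointwise identity on evaluating both at $\overline{l}^{i+1}$. As elsewhere in the paper, the identity is read as holding whenever the conditioning events are well-defined, that is, of positive probability; since $E_i\subseteq D_i\subseteq E_{i-1}$, the set of $i$ with $p(E_i)>0$ is an initial segment of $\{0,\ldots,N\}$, and there is nothing to prove outside it. The base case $i=0$ is immediate, $\overline{L}_{\ce,0}$ and $\overline{A}_{\co,0}$ being empty.

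The key step at stage $i\ge 1$ is to use \condref{consistency} to re-express the conditioning events in terms of the \emph{observational} covariate history, so that Sequential ignorability can be brought to bear. A short finite induction on $j$ shows that on $D_i$ one has $\overline{L}_{\co,j}=\overline{L}_{\ce,j}=\overline{l}_j$ for $j=1,\ldots,i$: granting $\overline{L}_{\co,j-1}=\overline{l}_{j-1}$, the event $D_i$ forces $A_{\co,j-1}=g(\overline{l}_{j-1})=g(\overline{L}_{\co,j-1})$, whence \condref{consistency} at index $j-1$ gives $L_{\co,j}=L_{\ce,j}=l_j$; the reverse inclusion is proved the same way. Hence $D_i=\{\overline{L}_{\co,i}=\overline{l}_i,\ \overline{A}_{\co,i-1}=\overline{g}(\overline{l}_{i-1})\}$ and likewise $E_i=\{\overline{L}_{\co,i}=\overline{l}_i,\ \overline{A}_{\co,i}=\overline{g}(\overline{l}_i)\}$. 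This is exactly the form required by \condref{ignore} at index $i$, which, applied conditionally on $D_i$ (valid since $p(D_i)>0$ forces $p(\overline{L}_{\co,i}=\overline{l}_i)>0$), gives $\ind{A_{\co,i}}{\overline{L}_{\ce}^{i+1}}{D_i}$; conditioning this further on $\{A_{\co,i}=g(\overline{l}_i)\}$ shows that $\overline{L}_{\ce}^{i+1}$ has the same conditional law given $E_i$ as given $D_i$.

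To finish, write $D_i=\{L_{\ce,i}=l_i\}\cap E_{i-1}$ and invoke the inductive hypothesis at stage $i-1$: $\overline{L}_{\ce}^{i}$ has the same conditional law given $E_{i-1}$ as given $\{\overline{L}_{\ce,i-1}=\overline{l}_{i-1}\}$. Since $\overline{L}_{\ce}^{i}=(L_{\ce,i},\overline{L}_{\ce}^{i+1})$, conditioning both laws additionally on $\{L_{\ce,i}=l_i\}$ shows that $\overline{L}_{\ce}^{i+1}$ has the same conditional law given $D_i$ as given $\{\overline{L}_{\ce,i}=\overline{l}_i\}$; chaining this with the conclusion of the previous paragraph closes the induction. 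The main obstacle is the identification step in the second paragraph: \condref{ignore} is stated in terms of $\overline{L}_{\co,i}$ while the target identity is stated in terms of $\overline{L}_{\ce,i}$, so the \condref{consistency}-driven rewriting of $D_i$ and $E_i$ is essential, and one must keep track of which conditioning events are non-null so that each appeal to \condref{ignore} and to the inductive hypothesis is legitimate; the remaining manipulations of conditional distributions are routine.
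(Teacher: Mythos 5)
Your proof is correct and follows essentially the same route as the paper's: a forward induction in which \condref{consistency} is first used to rewrite the conditioning event of \condref{ignore} in terms of the $\ce$-potential covariates, ignorability then lets you drop $A_{\co,i}=g(\overline l_i)$ from the conditioning set, and the inductive hypothesis (further conditioned on $L_{\ce,i}=l_i$) removes the remaining $\overline A_{\co,i-1}$. The only differences are presentational --- you make explicit the finite sub-induction identifying $\overline L_{\co,j}$ with $\overline L_{\ce,j}$ on the relevant events and the bookkeeping of which conditioning events are non-null, both of which the paper leaves implicit.
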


\begin{proof}
  First note that, from \condref{consistency}, when $\overline
  A_{\co,i} = \overline g(\overline l_i)$, $\overline L_{\co,i+1} =
  \overline l_{i+1}$ is equivalent to $\overline L_{\ce,i+1} =
  \overline l_{i+1}$.  So from \condref{ignore}
  \begin{equation}
    \label{eq:equiv}
    \ind {A_{\co,i+1}} {\overline L_\ce^{i+2}} {(\overline L_{\ce,i+1} = \overline l_{i+1}, \overline
      A_{\co,i} = \overline g(\overline l_i))}.
  \end{equation}
  
  We now show \eqref{extend} by induction on $i$.
  
  It holds trivially for $i=0$.  Suppose then it holds for $i$.
  Conditioning both sides on $L_{\ce,i+1} = l_{i+1}$ then yields
  \begin{displaymath}
    p(\overline L_\ce^{i+2} = \overline l^{i+2} \mid \overline L_{\ce, i+1} =  \overline
    l_{i+1}, \overline A_{\co,i} =  \overline g(\overline l_{i})) =  p(\overline L_\ce^{i+2} =
    \overline l^{i+2} \mid \overline L_{\ce, i+1} =  \overline l_{i+1}). 
  \end{displaymath}
  
  But from \eqref{equiv} we have
  \begin{displaymath}
    p(\overline L_\ce^{i+2} = \overline l^{i+2} \mid \overline L_{\ce, i+1} 
    =  \overline
    l_{i+1}, \overline A_{\co,i+1} =  \overline g(\overline l_{i+1}))
  \end{displaymath}  
  \begin{displaymath}
    = 
    p(\overline L_\ce^{i+2} = \overline l^{i+2} \mid \overline L_{\ce, i+1} =  \overline
    l_{i+1}, \overline A_{\co,i} =  \overline g(\overline l_{i})).
  \end{displaymath}
  
  Hence \eqref{extend} holds with $i$ replaced by $i+1$ and the
  induction proceeds.
\end{proof}

\begin{thm}
  If Conditions~\ref{cond:consistency} and \ref{cond:ignore} hold,
  then so does the stability condition \eqref{expci2}.
\end{thm}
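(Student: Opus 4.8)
The plan is to exhibit, for each $i=1,\ldots,N+1$, a single conditional density $q(l_i\mid\overline l_{i-1},\overline a_{i-1})$ that serves as a version of the law of $L_i$ given $(\overline L_{i-1},\overline A_{i-1})$ under both regimes, which is exactly what \defref{expgen} requires. In the PR set-up both conditionals are read off from the one joint law $p(\cdot)$: under $\co$ the relevant quantity is $p(L_{\co,i}=l_i\mid \overline L_{\co,i-1}=\overline l_{i-1},\overline A_{\co,i-1}=\overline a_{i-1})$ and under $\ce$ it is $p(L_{\ce,i}=l_i\mid \overline L_{\ce,i-1}=\overline l_{i-1},\overline A_{\ce,i-1}=\overline a_{i-1})$. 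Because of the functional constraint \eqref{ds}, the only conditioning histories with positive probability under $\ce$ are those with $\overline a_{i-1}=\overline g(\overline l_{i-1})$, and for those the $\overline A$-part of the conditioning is redundant. So it suffices to show that for every such history the two conditionals coincide whenever both are defined, and then to set $q$ equal to this common value at $\overline a_{i-1}=\overline g(\overline l_{i-1})$ and equal to the observational conditional otherwise.

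Fix $i$ and $\overline l_{i-1}$, write $\overline a_{i-1}=\overline g(\overline l_{i-1})$, and put $C:=\{\overline L_{\co,i-1}=\overline l_{i-1},\ \overline A_{\co,i-1}=\overline g(\overline l_{i-1})\}$. The first step is to invoke \condref{consistency}, iterated exactly as in the opening lines of the proof of \lemref{get}: on $\{\overline A_{\co,j}=\overline g(\overline l_j)\}$ the assertions $\overline L_{\co,j}=\overline l_j$ and $\overline L_{\ce,j}=\overline l_j$ are equivalent for each $j\le i$, so in particular $C=\{\overline L_{\ce,i-1}=\overline l_{i-1},\ \overline A_{\co,i-1}=\overline g(\overline l_{i-1})\}$, and (again by \condref{consistency}) on $C$ one has $L_{\co,i}=L_{\ce,i}$. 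Hence, assuming $p(C)>0$,
\[
  p(L_{\co,i}=l_i\mid C)=p(L_{\ce,i}=l_i\mid C)=p(L_{\ce,i}=l_i\mid \overline L_{\ce,i-1}=\overline l_{i-1},\ \overline A_{\co,i-1}=\overline g(\overline l_{i-1})).
\]
The second step applies \lemref{get}: extending $\overline l_{i-1}$ to a full sequence of positive $\ce$-probability (possible since $p(C)>0$ forces $p(\overline L_{\ce,i-1}=\overline l_{i-1})>0$), reading off \eqref{extend} at index $i-1$, and summing out $L_{\ce,i+1},\ldots,Y_\ce$, we get
\[
  p(L_{\ce,i}=l_i\mid \overline L_{\ce,i-1}=\overline l_{i-1},\ \overline A_{\co,i-1}=\overline g(\overline l_{i-1}))=p(L_{\ce,i}=l_i\mid \overline L_{\ce,i-1}=\overline l_{i-1}),
\]
and the right-hand side is precisely the $\ce$-conditional $p(L_i=l_i\mid\overline L_{i-1}=\overline l_{i-1},\overline A_{i-1}=\overline g(\overline l_{i-1});\ce)$, the conditioning on the determined value of $\overline A_{\ce,i-1}$ being redundant. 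Chaining the two displays shows the observational and interventional conditionals agree.

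It then remains to assemble $q$ and check the positivity qualification of \eqref{as}. When $p(C)=0$ the conditioning history is $\co$-null, hence imposes no constraint coming from $\co$, while the $\ce$-conditional is still handled by the redundancy remark whenever $p(\overline L_{\ce,i-1}=\overline l_{i-1})>0$; and for $\overline a_{i-1}\neq\overline g(\overline l_{i-1})$ the $\ce$-event is null, so we simply let $q$ be the $\co$-conditional there. Defining $q$ this way makes \eqref{as} hold for both $s=\co$ and $s=\ce$, which is \eqref{expci2}. The case $i=1$ (empty conditioning) is the base case and is immediate from \condref{consistency} with $i=0$, which forces $L_{\co,1}=L_{\ce,1}$.

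The main obstacle I anticipate is bookkeeping rather than substance: matching the index conventions of \lemref{get} (whose conditioning set is the hybrid $(\overline L_{\ce,\cdot},\overline A_{\co,\cdot})$) to the quantity actually needed, and tracking which conditioning events are null under which regime so as to use only \condref{consistency} and \condref{ignore} and not the positivity \condref{ac1a}, which is deliberately not among the hypotheses. The consistency-driven identity $C=\{\overline L_{\ce,i-1}=\overline l_{i-1},\ \overline A_{\co,i-1}=\overline g(\overline l_{i-1})\}$ is the linchpin, since it is what lets the observational conditional be rewritten in terms of the $\ce$-variables before \lemref{get} is applied.
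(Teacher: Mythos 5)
Your proof is correct and follows essentially the same route as the paper's: both reduce, via \eqref{ds} and the positivity qualification attached to \eqref{as}, to conditioning histories consistent with $g$, and then chain \condref{consistency}, \lemref{get} and the redundancy of the degenerate conditioning on the actions to equate the observational and interventional conditionals. The only differences are cosmetic --- you traverse the chain of equalities from the observational side to the interventional side, whereas the paper goes the other way, and you spell out the null-event bookkeeping more explicitly.
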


\begin{proof}
  Because of \eqref{ds}, and the restriction immediately below the
  density interpretation \eqref{as} of \eqref{expci2}, it is enough to
  show that $p(L_{\ce,i+1} = l_{i+1} \mid \overline L_{\ce,i} =
  \overline l_{i}, \overline A_{\ce,i} = \overline g(\overline l_{i}))
  = p(L_{\co,i+1} = l_{i+1} \mid \overline L_{\co,i} = \overline
  l_{i}, \overline A_{\co,i} = \overline g(\overline l_{i}))$.  But,
  again by \eqref{ds}, $p(L_{\ce,i+1} = l_{i+1} \mid \overline
  L_{\ce,i} = \overline l_{i}, \overline A_{\ce,i} = \overline
  g(\overline l_{i})) = p(L_{\ce,i+1} = l_{i+1} \mid \overline
  L_{\ce,i} = \overline l_{i})$.  By \lemref{get}, this is the same as
  $p(L_{\ce, {i+1}} = l_{i+1} \mid \overline L_{\ce, i} = \overline
  l_{i}, \overline A_{\co,i} = \overline g(\overline l_{i}))$, and by
  \condref{consistency} this is in turn the same as $p(L_{\co,i+1} =
  l_{i+1} \mid \overline L_{\ce,i} = \overline l_{i}, \overline
  A_{\co, i} = \overline g(\overline l_{i}))$.
\end{proof}

Finally, in the light of \eqref{ds}, it is easy to see that
\condref{ac1a} implies  positivity as given by \defref{positive}.\\

In summary, whenever the conditions usually used to justify
$G$-recursion in the potential response framework hold, so will our
own (as in \secref{simple}).  But our conditions are more general in
that they do not require the existence of, let alone any probabilistic
relationships between, potential responses under different regimes;
and can, moreover, just as easily handle randomized interventional
strategies, which are more problematic for the PR approach.

\subsection{Potential responses without stability}
\label{sec:judith}
A more general approach
\cite{jr:cma,jmr:hsrm,robins/hernan/brumback:marg_struct,gill/robins:01,lok:04}
within the potential response framework replaces
Conditions~\ref{cond:consistency} and \ref{cond:ignore} with the
following variants:

\begin{cond}
  \label{cond:consistency01}
  If $\overline A_{\co,N} = \overline g(\overline L_{\co,N})$, then
  $Y_\co = Y_\ce$.
\end{cond}
  
That is, if in the observational regime we happen to observe a
complete history that could have arisen under the operation of $\ce$,
then the response will be identical to what we would have observed had
we been operating $\ce$.

\begin{cond}
  \label{cond:ignore1}
  \begin{displaymath}
    \ind {A_{\co,i}} {Y_\ce} {(\overline L_{\co,i} = \overline l_i, \overline
      A_{\co,i-1} = \overline g(\overline l_{i-1}))}\quad (i= 1, \ldots, N).
  \end{displaymath}
\end{cond}
That is, if, in the observational strategy, we happen to observe a
partial history that could have arisen under the operation of $\ce$,
then the next action is independent of the potential response under
$\ce$.

\condref{consistency01} implies, and can in fact be replaced by:
\begin{cond}
  \label{cond:consistency1}
  Given $(\overline L_{\co,N} = \overline l_N, \overline A_{\co,N}
  =\overline g(\overline l_N))$, $Y_\co$ and $Y_\ce$ have the same
  conditional distribution.
\end{cond}

The deterministic strategy $\ce$ is termed {\em evaluable\/} if, for
each $i$:
\begin{cond}
  \label{cond:eval}
  \begin{displaymath}
    p\left(\overline L_{\co,i} = \overline l_i, \overline A_{\co,i-1} =
      g(\overline l_{i-1})\right) > 0 \Rightarrow p\left(\overline L_{\co,i} = \overline l_i, \overline A_{\co,i} =
      g(\overline l_{i})\right) > 0.
  \end{displaymath}
\end{cond}

Note that Conditions~\ref{cond:consistency01}--\ref{cond:eval} make no
mention of potential intermediate variables $(\overline L_{\ce,N},
\overline A_{\ce,N})$ under $\ce$ --- though they do involve both
versions, $Y_\co$ and $Y_\ce$, of the response.  The relevant
variables in the problem can thus be taken as $(\overline L_{\co,N},
\overline A_{\co,N}, Y_\co, Y_\ce)$, having a
joint distribution $p$ say.\\

Conditions~\ref{cond:ignore1} and \ref{cond:consistency1} are weaker
than those of \secref{stablerobins} as none of the variables under
strategy $\ce$ other than $Y_\ce$ are involved.  Note that, for
example, it is not required that, when an observational partial
history could have arisen under $\ce$, that is the history that would
have so arisen; but even so, constraints on $Y_\ce$ are then imposed.

\subsubsection{Connexions}
\label{sec:conn1}
It is straightforward to show directly that, when
Conditions~\ref{cond:ignore1}, \ref{cond:consistency1} and
\ref{cond:eval} hold, the marginal distribution of $Y_e$, or the
interventional consequence $\E\{k(Y_e)\}$, can be identified by the
$G$-recursion \eqref{determstandrecur}.  We now show how this approach
can be related to our own decision-theoretic one.  Specifically, we
shall show that, when the above conditions hold, so do those of
\secref{grec} (see also Theorem 3.2 of \textcite{jmr:lns}).

\condref{eval} is just \condref{ac2} specialized to the case of the
deterministic strategy $\ce$.

To continue, we construct a fictitious distribution $p_i(\cdot)$
$(i=0, \ldots, N)$, for variables $(L_1, A_1,\ldots,L_N,A_N,Y)$, as
follows.

\begin{defn}
  \label{def:fict}
  The distribution $p_i$ of $(L_1, A_1,\ldots,L_N,A_N,Y)$ is defined
  as the distribution under $p$ of $(L_{\co,1}, A_{\co,1}, \ldots,
  L_{\co,i}, A_{\co,i}, L_{\co,i+1}, g(\overline L_{\co,i+1}), \ldots,
  L_{\co,N}, g(\overline L_{\co,N}), Y_\ce)$.
\end{defn}
Thus
\begin{eqnarray}
  \nonumber
  \lefteqn{p_i(\overline L_N = \overline l_N, \overline A_N = \overline a_N, Y=y) :=}\\
  \label{eq:fict}
  &&  \left\{\begin{array}[c]{ll}
      p(\overline L_{\co,N} = \overline l_N, \overline A_{\co,i} = \overline a_i, Y_\ce = y) &
      \mbox{if } a_{i+1} = g(\overline l_{i+1}), \ldots,  a_{N} = g(\overline l_{N})\\
      0 & \mbox{otherwise.}
    \end{array}
  \right.
\end{eqnarray}

Note that this construction of $p_i$ is quite different from that
developed, in a different context, in \secref{extstab}.  In
particular, the marginal joint distribution of $(\overline L_N)$ is,
for every $p_i$, the same as under $p_\co$.

Equation~\eqref{0=e} follows trivially from \defref{fict}.

As in \secref{extstab}, Properties~\eqref{acnew}, and \eqref{l} for $i
\leq N$, hold because the joint distribution of all variables up to
and including $L_i$ is the same under $p_{i-1}$ as under
$p(\,\cdot\,;\,\co)$; while for \eqref{l} with $i = N+1$, when
$L_{N+1} \equiv Y$, we also use \condref{consistency1}.
  
Equation~\eqref{a} holds since the distribution on either side is
concentrated on $g(\overline l_i)$.

Finally we show \eqref{y}.

We only need this for $(\overline l_i, \overline a_i) \in
\Gamma_{i-1}$.  Since then $(\overline l_i, \overline a_i) \in
\Gamma$, we must by \eqref{gamma} have $p(a_j \mid \overline l_j,
\overline a_{j-1}\,;\,\ce)> 0$ ($1 \leq j \leq i$), which in virtue of
the deterministic nature of strategy $\ce$ requires $\overline a_i =
\overline g(\overline l_i)$; and then the additional condition
$(\overline l_i, \overline a_i) \in {\cal O}$ becomes $p(\overline
L_{\co,i} = \overline l_i, \overline A_{\co,i} = \overline g(\overline
l_i)) > 0$.  So in this case \eqref{y} becomes:
\begin{equation}
  \label{eq:lhsy}
  p(Y_\ce = y \mid \overline L_{\co,i} = \overline l_i, \overline A_{\co,i-1} =
  \overline g(\overline l_{i-1})) =     p(Y_\ce = y \mid \overline
  L_{\co,i} = \overline l_i, \overline A_{\co,i} = 
  \overline g(\overline l_{i})). 
\end{equation}
But this is an immediate consequence of \condref{ignore1}.

In summary, we have shown:
\begin{thm}
  Under Conditions~\ref{cond:ac1a}, \ref{cond:consistency1} and
  \ref{cond:ignore1}, and defining $p_i(\cdot)$ by \defref{fict},
  Conditions~\ref{def:positive} and \ref{cond:ac2} and
  equations~\eqref{0=e}--\eqref{y} are all satisfied.
\end{thm}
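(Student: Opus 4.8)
\medskip\noindent The statement merely gathers, in one place, facts that fall out of the definition of $p_i$ in \defref{fict}; accordingly the plan is to check each listed conclusion in turn, exploiting throughout that $p_i$ is simply the law, under $p$, of the $\co$-potential process run with its own actions through stage $i$ and with the deterministic rule $g$ imposed from stage $i+1$ onward, the response always being read off as $Y_\ce$. Hence on the ``consistent'' set $\Gamma_{i-1}$ --- the partial histories whose actions to date all equal the $g$-value of the $L$-history seen so far and whose matching observational event has positive probability --- each conditional density of $p_{i-1}$ that occurs in \eqref{l}--\eqref{y} collapses to an ordinary conditional density of $p$ taken along the $\co$-potential path, and it is exactly on $\Gamma_{i-1}$ (where all of them are unambiguously defined) that the required identities must be verified.

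\medskip\noindent\textbf{The positivity conclusions.} First I would record that, for the deterministic $\ce$, $\Gamma$ consists precisely of the partial histories $(\overline l_i, \overline a_i)$ with $\overline a_i = \overline g(\overline l_i)$ and $(\overline l_i, \overline g(\overline l_i)) \in {\cal O}$, so that \condref{ac2} is literally \condref{eval}. To derive \condref{eval} from \condref{ac1a}: if $p(\overline L_{\co,i} = \overline l_i, \overline A_{\co,i-1} = \overline g(\overline l_{i-1})) > 0$ then in particular $p(\overline L_{\co,i} = \overline l_i) > 0$, so the $L$-history extends to some full history $\overline l_N$ with $p(\overline L_{\co,N} = \overline l_N) > 0$; \condref{ac1a} then gives $p(\overline L_{\co,N} = \overline l_N, \overline A_{\co,N} = \overline g(\overline l_N)) > 0$, and restricting this event to its first $i$ coordinates --- using that $g$ at stage $j \le i$ depends only on $\overline l_j$, so that $\overline g(\overline l_N)$ truncated to length $i$ is $\overline g(\overline l_i)$ --- yields $(\overline l_i, \overline g(\overline l_i)) \in {\cal O}$, which is \condref{eval}, hence \condref{ac2}. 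For \defref{positive} I would invoke \condref{ac1a} together with the functional constraint \eqref{ds}, exactly as at the end of \secref{conn}: under $\ce$ every action is pinned to its $g$-value, so a configuration of positive $\ce$-probability can be matched --- via the same extension argument, and via \condref{consistency1} to pass from $Y_\ce$ to $Y_\co$ on the relevant event --- to one of positive $\co$-probability.

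\medskip\noindent\textbf{The equations \eqref{0=e}--\eqref{y}.} Equation \eqref{0=e} is immediate from \defref{fict}, since by construction the $Y$-marginal of $p_0$ is the law of $Y_\ce$, i.e.\ the consequence of $\ce$. For \eqref{acnew} and for \eqref{l} with $i \le N$: the joint law of $(\overline L_i, \overline A_{i-1})$ under $p_{i-1}$ agrees with its law under $p(\,\cdot\,;\,\co)$, since $p_{i-1}$ follows the $\co$-potential process up through $A_{\co,i-1}$; on $\Gamma_{i-1}$ the conditioning history fixes $\overline a_{i-1} = \overline g(\overline l_{i-1})$, and then $p_{i-1}(l_i \mid \overline l_{i-1}, \overline a_{i-1}) = p(l_i \mid \overline l_{i-1}, \overline a_{i-1}\,;\,\co)$. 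For \eqref{l} with $i = N+1$, reading off $p_N$ gives $p_N(y \mid \overline l_N, \overline g(\overline l_N)) = p(Y_\ce = y \mid \overline L_{\co,N} = \overline l_N, \overline A_{\co,N} = \overline g(\overline l_N))$, which equals the $\co$-side by \condref{consistency1}. Equation \eqref{a} holds because under $p_{i-1}$, just as under $\ce$, the $i$-th action is the deterministic value $g(\overline l_i)$, so both conditional laws of $A_i$ are degenerate there. Finally, for \eqref{y}: on $\Gamma_{i-1}$ we again have $\overline a_i = \overline g(\overline l_i)$, and comparing the readings of $p_{i-1}$ and $p_i$ turns \eqref{y} into \eqref{lhsy}, namely $p(Y_\ce = y \mid \overline L_{\co,i} = \overline l_i, \overline A_{\co,i-1} = \overline g(\overline l_{i-1})) = p(Y_\ce = y \mid \overline L_{\co,i} = \overline l_i, \overline A_{\co,i} = \overline g(\overline l_i))$; this is immediate from \condref{ignore1}, which says that $A_{\co,i}$ is conditionally independent of $Y_\ce$ given that partial history, so adjoining the event $A_{\co,i} = g(\overline l_i)$ to the conditioning cannot change the law of $Y_\ce$.

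\medskip\noindent\textbf{Where the effort lies.} There is no deep obstacle here --- the substance was already arranged in \defref{fict} and in the discussion of \secref{conn1}. The points that demand care are the positivity transfers: extending a truncated $L$-history to a full one so that \condref{ac1a} can be applied, and checking that truncating $\overline g(\overline l_N)$ to length $i$ returns $\overline g(\overline l_i)$. Beyond that, the only real bookkeeping is to keep straight, for each $p_i$, which stages are governed by the $\co$-potential actions and which are pinned to $g$, so that every conditional density occurring in \eqref{l}--\eqref{y} is identified correctly on the set $\Gamma_{i-1}$ on which they are all defined.
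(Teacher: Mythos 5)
Your proposal is correct and follows essentially the same route as the paper, which establishes the theorem by exactly this itemized verification: \eqref{0=e} directly from \defref{fict}; \eqref{acnew} and \eqref{l} for $i\le N$ from the agreement of the $\co$-marginals; \eqref{l} at $i=N+1$ via \condref{consistency1}; \eqref{a} by degeneracy at $g(\overline l_i)$; \eqref{y} by reduction on $\Gamma_{i-1}$ to \eqref{lhsy} and hence to \condref{ignore1}; and the positivity conditions from \condref{ac1a} (noting that \condref{ac2} specializes to \condref{eval} for deterministic $\ce$). Your explicit extension argument deriving \condref{eval} from \condref{ac1a} usefully fills in a step the paper dismisses as ``easy to see,'' but it is not a different method.
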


From \lemref{induction} we now deduce:

\begin{cor}
  Under Conditions~\ref{cond:ignore1}--\ref{cond:eval}, the
  consequence of strategy $\ce$ can be recovered using the
  $G$-recursion \eqref{determstandrecur}.
\end{cor}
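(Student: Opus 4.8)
The plan is to obtain the Corollary as a short composition of the Theorem just proved with \lemref{induction}; the substantive verification is already done, and almost nothing new is needed.

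First I would note that, for a deterministic $\ce$, the three hypotheses are exactly what the general machinery of \secref{grec} requires. \condref{eval} is, as observed just before the Theorem, nothing but \condref{ac2} specialised to the deterministic strategy $\ce$ (and in fact $\Gamma = {\cal E}$). And, for the fictitious distributions $p_i(\,\cdot\,)$ of \defref{fict}, \condref{consistency1} and \condref{ignore1} yield properties \eqref{0=e}--\eqref{y}: \eqref{0=e} is immediate from \defref{fict}; \eqref{acnew}, \eqref{acnew2} and \eqref{l} for $i \le N$ hold because the law of $(\overline L_i, \overline A_i)$ under $p_{i-1}$ coincides with its law under $p(\,\cdot\,;\,\co)$; \eqref{l} at $i = N+1$ uses \condref{consistency1}; \eqref{a} holds since both sides are degenerate at $g(\overline l_i)$; and \eqref{y}, in the reduced form \eqref{lhsy}, is precisely \condref{ignore1}. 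This is just the verification carried out in the text preceding the Theorem.

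It then remains only to feed this into \lemref{induction}: since \condref{ac2} holds and \eqref{0=e}--\eqref{y} hold for $(p_i)$, the $G$-recursion \eqref{standrecur} is valid for the function $f$ of \eqref{fi}, started from the boundary value \eqref{fngen} and exiting with $f(\emptyset)$, which by \eqref{fempty} is the consequence of $\ce$. Finally, because $\ce$ is deterministic, \eqref{a} forces $p_{i-1}(a_i \mid \overline l_i, \overline a_{i-1})$ to be degenerate at $g(\overline l_i)$, so the inner summation over $a_i$ in \eqref{standrecur} collapses and the recursion reduces to the form \eqref{determstandrecur}, with $f(\overline l_N) = \E\{k(Y) \mid \overline l_N, \overline g(\overline l_N)\,;\,\co\}$ as its starting value --- exactly the claimed recovery.

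I would expect no genuine obstacle here. The one point to keep an eye on is the positivity bookkeeping --- that the only partial histories ever visited by the recursion are those in $\Gamma$, on which the conditional probabilities appearing are unambiguously defined under $\co$ --- but this is handled by \condref{ac2} (equivalently \condref{eval}) in exactly the way it is handled inside the proof of \lemref{induction}, so the Corollary really is a mechanical corollary of what precedes it.
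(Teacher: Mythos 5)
Your proposal is correct and follows exactly the route the paper intends: the Corollary is obtained by feeding the verification of Condition~\ref{cond:ac2} (via Condition~\ref{cond:eval}) and of properties \eqref{0=e}--\eqref{y} for the fictitious distributions of \defref{fict} --- which is the content of the Theorem immediately preceding --- into \lemref{induction}, and then observing that determinism of $\ce$ collapses \eqref{standrecur} to \eqref{determstandrecur}. Nothing is missing.
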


\section{Discussion}
\label{sec:discuss}

\subsection{What has been achieved?}
\label{sec:what}
In this work we have described and developed a fully
decision-theoretic approach to the problem of dynamic treatment
assignment.  The central issue identified and addressed is the
transfer of probabilistic information between differing regimes.  When
justified, this can allow future policy analysis to take appropriate
account of previously gathered data.

Out of this approach we have developed an alternative derivation and
interpretation of Robins's $G$-computation algorithm, relating it to
the fundamental `backward induction' recursion algorithm of dynamic
programming.  Moreover we have shown that this is applicable more
generally, including to problems involving randomized treatment
decisions.

We have devoted some attention to the question of how one might
justify the simple stability property \eqref{expci2}, or the more
general conditions of \lemref{induction}.  One can attempt this by
including unobservable variables into one's reasoning, and using
influence diagram to check the desired properties by simple graphical
manipulations.  However, as discussed in \secref{seqirrgraph}, the
graphical approach sometimes imposes more restrictions than necessary,
and an algebraic approach based on manipulations of conditional
independence properties can be more general and powerful.

We have also broadened the application of the graphical approach of
\textcite{pearl/robins:uai95} to allow assessment of the effects of
conditional interventions, that are allowed to depend on the values of
other variables in the problem. This is a particularly natural
requirement when we contemplate sequential interventions, where it is
clearly desirable to be able to respond appropriately to the
information obtained to date, and so naturally to consider dynamic
strategies.  We have noted that the graphical expression of
condition~\eqref{expci2} for simple stability is equivalent to
sequential application of Pearl's back-door criterion, and that this
allows identification by $G$-recursion of the consequences of
conditional interventions, not only for the ultimate response $Y$ but
also for every intermediate covariate $L_i$.  We have further noted
that our graphical check for the more general case of \secref{moregen}
is equivalent to that suggested by \textcite{jmr:lns}.

\subsection{Syntax and semantics}
\label{sec:syntax}

An important pragmatic aspect of our approach is that, in order to
apply it sensibly, we have to be very clear about the real-world
meaning of all the variables (whether `random' or `decision')
appearing in our formulae.  Thus, when considering some interventional
regime, we need to understand exactly what real-world interventions
are involved: we can not assume that setting a variable to a specific
value in different ways, or in different contexts, will have the same
overall effects on the system studied --- see
\textcite{hernan/taubman:08} for a discussion of these issues in the
context of a specific application.  Whenever we consider arguments in
favour of or against accepting a condition such as stability or
extended stability, we must do so in full appreciation of the applied
context and circumstances --- there can be no purely formal way of
addressing such issues.

This emphasis on the semantics of our representations contrasts with
that of other popular approaches, such as causal interpretation of
DAGs or the {\em do\/}-calculus \cite{pearl:book}, which appear to
operate purely syntactically.  However that is an illusion, since
those interpretations and manipulations are always grounded in an
already assumed formal representation of the problem (\eg\ as a DAG,
or a set of structural equations).  So until we have satisfied
ourselves that this representation truly does capture our
understanding of the real-world behaviour of our problem --- in
particular, that it correctly describes the effects of the
interventions we care about --- there can be no reason to have any
faith in the results of any formal manipulations on it.

\subsection{Statistical inference}

We have not directly addressed problems of statistical inference.  One
might want to estimate the consequences of some proposed sequential
strategy, or test a null hypothesis that no strategy is effective in
controlling the outcome.  In principle one can estimate the
ingredients of the $G$-recursion formula, either parametrically or
non-parametrically, from the available data, and then (assuming simple
stability, or the more general conditions of \lemref{induction}) apply
it to supply estimates or tests of the effects of strategies of
interest.  The proposal by \textcite{arjas/saarela:10} can be regarded
as a Bayesian version of $G$-computation.  However, as pointed out by
\textcite{jr+lw:97}, na\"ive use of parametric models for the required
conditional distributions can lead to a `null-paradox', rendering it
impossible to discover that different strategies have the same
consequences.  Also, when continuous variables are included,
$G$-recursion can involve a large number of nested integrals and
become computationally impossible to implement.  Hence we find only a
few instances where $G$-computation has been used for practical data
analysis \cite{robins/greenland/hu:99,taubmanetal:09}.  The problems
in applying $G$-recursion are exacerbated by the need, in many
practical applications, to choose a large set of covariates ${\cal L}$
so as to justify the stability assumption. This makes the modelling
task more difficult and raises issues of robustness to
misspecification. Such considerations have motivated the introduction
of marginal or nested `structural models'
\cite{robins:enc98,robins/hernan/brumback:marg_struct}, as well as
doubly-robust methods \cite{kang/schafer:07}, avoiding the
null--paradox.  Note that while $G$-recursion provides a
likelihood-based approach to the estimation of the consequence of a
given strategy, these latter methods rely on estimating equations. It
should be straightforward to reinterpret these models and analyses
within a fully decision-theoretic framework, by appropriate modelling
of the intervention distributions $p(\,\cdot\,;s)$.

\subsection{Optimal dynamic treatment strategies}
Our work is motivated in part by the desire to compare a variety of
sequential treatment strategies so as to identify the best one.
Recall that our set of regimes is given by $\cs=\{\co\} \cup \cs^*$,
where $\co$ is the observational regime, and $\cs^*$ is the set of
interventional strategies that we want to compare.  If we want to
apply $G$-recursion, justifying it by simple stability as in
\secref{grec0} or by the more general conditions of
\lemref{induction}, we need to ensure that the respective conditions
hold for {\em all\/} strategies $\ce\in \cs^*$ that we want to
compare.  As we saw in \secref{without}, this is not trivial: if
$\cs^*$ contains static as well as dynamic strategies, in some
situations the former may be identified while the latter are not.  In
fact it follows from \textcite{apd/vd:uai08} that if want to find an
optimal strategy among all dynamic regimes, we will usually need the
restrictive requirement of simple stability to hold for all $\ce\in
\cs^*$.

As mentioned in \secref{estimation}, the standard dynamic programming
routine for identifying an optimal strategy can be regarded as a
combination of $G$-recursion and stagewise optimisation.  Under
conditions allowing $G$-recursion, this can in principle be put
directly into effect, after estimating all the required distributional
ingredients from the available data.  In practice (as pointed out by
\textcite{jr:mm} and many others since), this quickly becomes
infeasible, especially if one wants to avoid parametric restrictions.
This is because the number of possible histories for which the optimal
next decision has to be determined at each stage of the backward
induction recursion can grow extremely rapidly with increasing number
$N$ of time points and levels of $(\overline l_i, \overline a_{i-1})$.

Alternative approaches to the optimisation problem to sidestep this
computational complexity have been suggested.  \textcite{susan}
introduces a method based on regret functions (see the discussion and
application in \textcite{henderson}), which is closely related to the
structural nested models of \textcite{jmr:04} (see
\textcite{moodie/etal:07} for a comparison of these two approaches).
\textcite{henderson/al:09} modify Murphy's approach so as to be
amenable to standard statistical model checking procedures.  However,
all these alternative methods for finding optimal dynamic treatments
rely on the same identification conditions underlying $G$-computation,
as well as on various additional (semi-)parametric assumptions.

\subsection{Complete identifiability}

Simple stability, or the alternative conditions of \lemref{induction},
are sufficient conditions allowing the use of $G$-recursion, and
thereby identification of the consequences of a given strategy.  In
recent years the Artificial Intelligence community has devoted some
effort to finding necessary as well as sufficient conditions for the
identifiability of consequences of interventions
\cite{huang:ncai06,shpitser:uai06,shpitser:ncai06}.  These results
rely heavily on the assumptions encoded in causal DAGs or
semi-Markovian causal models.  Even within this more restricted
framework, the general question of identifiability of dynamic
treatment strategies seems still to be an open problem (but see
\textcite{tian:08}).

\subsection{Other problems}
\label{sec:other}
Many problems in causal inference, previously tackled using potential
response or causal DAG formulations, gain in clarity, simplicity and
generality when reformulated as problems of decision analysis.
Specific topics that have been fruitfully treated in this way include:
confounding \cite{apd:infdiags}; partial compliance \cite{apd:hsss};
direct and indirect effects \cite{vd/apd/sg:uai,geneletti:07};
identification of the effect of treatment on the treated
\cite{sgg/apd:ett}; Mendelian randomization \cite{did/sheeh:07};
Granger causality \cite{eichler/did:10}; and causal inference under
outcome-dependent sampling \cite{didetal:10}.  However there still
remains a wide range of other issues in `causal inference' that we
believe would benefit from the application of the decision-theoretic
viewpoint.

\section*{Acknowledgment}
We are indebted to Susan Murphy for stimulating this work and for many
valuable comments. We also want to thank Jamie Robins for helpful
discussions.  {Financial support from MRC Collaborative Project Grant
  G0601625 is gratefully acknowledged.}

\section*{APPENDIX}
\appendix

\section{Two lemmas on DAG-separation}
\label{sec:problemmas}

Here we prove generalised versions of equations~(8) and (9) (Lemma~1)
of \textcite{pearl/robins:uai95}.

Let $\dag$ be a DAG.

\begin{lemma}
  \begin{equation}
    \label{eq:prlem1}
    \inda Y X Z {\dag} \Rightarrow  \inda Y X {Z^*} {\dag}
  \end{equation}
  whenever $Z \subseteq Z^* \subseteq \an(X \cup Y \cup Z)$.
\end{lemma}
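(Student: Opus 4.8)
The plan is to reduce the implication \eqref{prlem1} to a single observation: enlarging the conditioning set from $Z$ to $Z^*$, under the stated constraint, changes neither the relevant ancestral subgraph nor its moralization, so the undirected graph against which both separation statements are checked is literally one and the same. Write $A \defeq \an_\dag(X \cup Y \cup Z)$ and $A^* \defeq \an_\dag(X \cup Y \cup Z^*)$ for the two smallest ancestral subgraphs involved.

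First I would show $A = A^*$. The inclusion $A \subseteq A^*$ is immediate, since $Z \subseteq Z^*$ gives $X \cup Y \cup Z \subseteq X \cup Y \cup Z^*$ and ancestral closure is monotone. For the reverse inclusion, note that the node set of $A$ already contains $X \cup Y$ and, by hypothesis, also contains $Z^*$; hence $X \cup Y \cup Z^* \subseteq A$. Since $A$ is itself an ancestral set, applying $\an_\dag$ to this inclusion (monotonicity plus $\an_\dag(A) = A$) yields $A^* = \an_\dag(X \cup Y \cup Z^*) \subseteq A$. Thus $A = A^*$; this is the only place the hypothesis $Z^* \subseteq \an_\dag(X \cup Y \cup Z)$ enters.

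Next I would use the fact that the moral ancestral graph depends on a node set only through the ancestral subgraph it generates: by definition $\ma_\dag(S) = \mo(\an_\dag(S))$, and moralization --- inserting an edge between any pair of unlinked parents of a common child, then discarding directions --- is determined by that DAG alone. Consequently $\ma_\dag(X \cup Y \cup Z) = \mo(A) = \mo(A^*) = \ma_\dag(X \cup Y \cup Z^*)$; denote this common undirected graph by $\graf$.

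Finally, the premise $\inda Y X Z \dag$ says precisely that in $\graf$ every path joining $Y$ to $X$ intersects $Z$; since $Z \subseteq Z^*$, any such path a fortiori intersects $Z^*$, and as $\graf$ is also $\ma_\dag(X \cup Y \cup Z^*)$, this is exactly $\inda Y X {Z^*} \dag$. I expect no genuine computational obstacle here: the whole content of the lemma sits in the ancestral-set identity $A = A^*$, and the point of the upper bound on $Z^*$ is to stop new ancestors --- and hence, through moralization, possibly new edges --- from entering the graph, which is the only thing that could otherwise destroy the separation. I would therefore flag that hypothesis explicitly at the step where it is used.
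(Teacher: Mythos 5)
Your proof is correct and follows essentially the same route as the paper's: both arguments rest on the observation that $\ma(X \cup Y \cup Z) = \ma(X \cup Y \cup Z^*)$, after which the separation is immediate since any path meeting $Z$ meets the larger set $Z^*$. The only difference is that you spell out the ancestral-set identity $\an(X \cup Y \cup Z) = \an(X \cup Y \cup Z^*)$ in detail, whereas the paper simply asserts the equality of the two moral graphs.
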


\begin{proof}
  Let $\graf:=\ma(X \cup Y \cup Z)$; then also $\graf = \ma(X \cup Y
  \cup Z^*)$.  The left-hand side of \eqref{prlem1} says that any path
  from $Y$ to $X$ in $\graf$ intersects $Z$, whence it must also
  intersect the larger set $Z^*$.
\end{proof}

\begin{lemma}
  \label{lem:pr2}
  \begin{equation}
    \label{eq:prlem2}
    \inda Y X Z {\dag} \Rightarrow  \inda Y X {Z^*}  {\dag}
  \end{equation}
  whenever $Z^* = Z \cap A$ for $A$ an ancestral set in $\dag$
  containing $X \cup Y$.
\end{lemma}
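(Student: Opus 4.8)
The plan is to argue directly with moral ancestral graphs, in the same style as the proof just given for \eqref{prlem1}. Write $\graf := \ma(X\cup Y\cup Z)$ and $\graf^* := \ma(X\cup Y\cup Z^*)$. Then the hypothesis $\inda Y X Z \dag$ says exactly that every path joining $Y$ to $X$ in $\graf$ meets $Z$, and the conclusion $\inda Y X {Z^*}\dag$ says every path joining $Y$ to $X$ in $\graf^*$ meets $Z^*$.

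First I would record two inclusions between ancestral sets. Since $X\cup Y\subseteq A$ and $Z^* = Z\cap A\subseteq A$, the ancestral set $A$ contains $X\cup Y\cup Z^*$, so $\an(X\cup Y\cup Z^*)\subseteq A$ by minimality; and since $Z^*\subseteq Z$ we also have $\an(X\cup Y\cup Z^*)\subseteq\an(X\cup Y\cup Z)$. The central step is then to observe that $\graf^*$ is a subgraph of $\graf$ on the vertex set $\an(X\cup Y\cup Z^*)$. This uses the fact that an ancestral subgraph retains, for each of its nodes, all of that node's parents in $\dag$: a directed edge of $\dag$ between two nodes of $\an(X\cup Y\cup Z^*)$ is therefore an edge of $\graf$, and a moralization link of $\graf^*$ between co-parents $u,w$ comes from a common child $c\in\an(X\cup Y\cup Z^*)\subseteq\an(X\cup Y\cup Z)$, so $u,w$ are co-parents in $\an(X\cup Y\cup Z)$ as well and the link is present in $\graf$ too. (The reverse inclusion can genuinely fail, a common child lying in $\an(X\cup Y\cup Z)\setminus\an(X\cup Y\cup Z^*)$ producing a link only in $\graf$; but this one-sided inclusion is all that is needed.)

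The conclusion is then immediate. Let $\pi$ be any path joining $Y$ to $X$ in $\graf^*$. By the previous step $\pi$ is also a path joining $Y$ to $X$ in $\graf$, so the hypothesis forces $\pi$ to pass through some vertex $v\in Z$. Since $v$ lies on $\pi$, which is contained in $\an(X\cup Y\cup Z^*)\subseteq A$, we get $v\in Z\cap A = Z^*$. Hence every path joining $Y$ to $X$ in $\graf^*$ meets $Z^*$, which is $\inda Y X {Z^*}\dag$.

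The only point I expect to need care, and would state explicitly, is this edge-containment $\graf^*\subseteq\graf$ --- in particular the slightly counterintuitive direction, that it is the moral graph of the \emph{smaller} ancestral set that embeds in that of the larger one, so that a path may be transported from $\graf^*$ up into $\graf$ where the hypothesis applies. Everything else is routine manipulation of ancestral sets and of the definition of $\ma(\,\cdot\,)$; and since the preceding lemma only enlarges a conditioning set it does not appear to give a shortcut here, so this self-contained argument is simplest.
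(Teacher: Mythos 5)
Your proposal is correct and follows essentially the same route as the paper's own proof: establish $\an(X\cup Y\cup Z^*)\subseteq A$, observe that $\ma(X\cup Y\cup Z^*)$ is a subgraph of $\ma(X\cup Y\cup Z)$, transport a $Y$--$X$ path from the smaller moral graph into the larger one, and note that its intersection with $Z$ must lie in $A$ and hence in $Z^*$. The only differences are cosmetic --- you argue directly where the paper argues by contradiction, and you spell out the edge-set inclusion (including the moralization links) that the paper asserts without proof.
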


\begin{proof}
  We first note that $(X \cup Y) \cup Z^*$ is a subset of $A$, since
  both its terms are.  Since $A$ is ancestral, it follows that
  \begin{equation}
    \label{eq:suba}
    \an(X \cup Y \cup Z^*) \subseteq A.
  \end{equation}
  
  Define $\graf$ as above, and $\graf':=\ma(X \cup Y \cup Z^*)$.  Then
  both the node-set and edge-set for $\graf'$ are subsets of the
  corresponding sets for $\graf$, and hence the same property holds
  for the path-set.  Suppose the right-hand side of \eqref{prlem2}
  fails.  Then there exists a path $\pi$ in $\graf'$ connecting $Y$
  and $X$ and avoiding $Z^*$; then $\pi$ is a path in $\graf$ with the
  same property.  Since $\pi\subseteq \graf'$, if it intersects $Z$
  anywhere it can only do so at a point of $\an(X \cup Y \cup Z^*)$
  --- and thus, by \eqref{suba}, at a point in $A$, and hence in
  $Z^*$.  Since this has been excluded, the result follows.
\end{proof}

\end{document}